%% file: main-arxiv.tex
\documentclass{article}

\usepackage[english]{babel}

\usepackage[letterpaper,top=2cm,bottom=2cm,left=3cm,right=3cm,marginparwidth=1.75cm]{geometry}

\input{config}

\title{Stochastic Saddle Avoidance Beyond Unit Excitation and Smoothness: A Pathwise Lyapunov--Perron Framework}
\author{Junwen Qiu\thanks{Industrial Systems Engineering and Management, National University of Singapore, Singapore.
  } \and 
        Bohao Ma\thanks{School of Data Science, The Chinese University of Hong Kong, Shenzhen, China. \\ Email: \{jwqiu@nus.edu.sg, bohaoma@link.cuhk.edu.cn, andremilzarek@cuhk.edu.cn,
  junyuz@nus.edu.sg\} } \and Andre Milzarek${}^\dagger$ \and Junyu Zhang${}^*$ }
\begin{document}
\maketitle

\begin{abstract}
Unit excitation (UE) is a common assumption in stochastic saddle avoidance: the stochastic error must have a uniformly positive component along every direction, in expectation. This condition gives a direct way to rule out convergence to strict saddles, but it also oversimplifies the actual noise structure, and does not match many stochastic optimization regimes. In overparameterized or interpolation models, the noise may vanish near stationarity. In finite-sum problems, the stochastic gradient noise may lie in a low-dimensional, data-dependent subspace. In these (common) scenarios, UE is naturally not satisfied.
In this paper, we prove an abstract almost sure avoidance theorem for stochastic recursions without UE\@. The theorem replaces UE-type requirements by verifiable pathwise conditions. In applications, these conditions follow, e.g., from local smoothness and finite-moment assumptions under standard i.i.d.\ sampling, or from the finite-sum structure under without-replacement sampling. Since the stochastically sampled maps generally do not share a fixed point, the celebrated center-stable manifold argument used in deterministic analyses is not directly applicable. Instead, we use a path-dependent change of variables together with a pathwise Lyapunov--Perron-based proof strategy. As applications, we obtain strict saddle avoidance for stochastic mirror descent (including SGD) and for random reshuffling. For nonsmooth composite objectives, we prove avoidance results for a proximal-type stochastic gradient method. Combining these insights with suitable iterate convergence guarantees allows us to establish convergence to local minimizers of the original objective function.
\end{abstract}
\section{Introduction}

First-order methods are among the primary workhorses for modern large-scale nonconvex optimization. Under standard geometric assumptions, such as the Kurdyka-{\L}ojasiewicz property, many first-order methods are known to have convergent trajectories whose limits are stationary \cite{absil2005convergence,AttBol09,AttBolRedSou10,AttBolSva13}. However, this does not imply convergence to local minimizers because local maximizers and saddle points can also be stationary points. One common setting in which this gap can be closed is the strict saddle property, under which every stationary point is either a local minimizer or a strict saddle --- a point having inherent negative curvature directions. This property appears in many structured nonconvex landscapes, including the eigenvector problem and PCA \cite{SunQuWright2015NotScary}, orthogonal tensor decomposition \cite{GeHuangJinYuan2015}, dictionary learning \cite{SunQuWright2017DictionaryI}, phase retrieval \cite{SunQuWright2016PhaseRetrieval}, and low-rank matrix recovery \cite{BhojanapalliNeyshaburSrebro2016LowRank}. For such problems, once the iterates are known to converge to stationary points, convergence to a local minimizer reduces to ruling out convergence to strict saddles. This motivates the study of strict saddle avoidance properties.

Lee et al.\ \cite{LeePPSJR2019} study first-order deterministic methods through the iterative update $z^{k+1}=T(z^k)$. At a strict saddle $z^*$, the derivative ${\rm D}T(z^*)$ has at least one unstable direction. The center-stable manifold theorem \cite{Vanderbauwhede1989} then implies that the set of initial points converging to strict saddles has measure zero. The framework in \cite{LeePPSJR2019} covers gradient descent, manifold gradient descent, mirror descent, and coordinate descent methods with constant step-sizes. The subsequent works \cite{panageas2019first,mucsat2026non} extend such avoidance guarantees to the setting where diminishing step-sizes are used. Davis and Drusvyatskiy \cite{DavisDrusvyatskiy2022} establish saddle avoidance for proximal-type methods applied to nonsmooth problems via the active manifold theory.

Modern optimization problems can often be formulated as large-scale finite-sum problems or online problems with streaming data, where deterministic methods with full gradient computation can be expensive or unavailable. Stochastic algorithms therefore update the iterate using noisy or incomplete first-order information \cite{robbins1951stochastic,bottou2018optimization}. Classical results of Pemantle and Bena\"{\i}m prove almost sure non-convergence to unstable points for stochastic approximation methods \cite{pemantle1990nonconvergence,benaim1996dynamical,benaim1999dynamics}, and this line of analysis remains the broader basis of the modern stochastic saddle avoidance theory \cite{mertikopoulos2020almost,BianchiHachemSchechtman2023,liuYuan2023almostSureSaddleAvoidance,DavisDrusvyatskiyJiang2025}. 

Nevertheless, there are three significant gaps between this line of theory and the stochastic algorithms used in current optimization practice: (i) The usual excitation assumptions, such as UE, are much stronger than what practical stochastic oracles provide; (ii) For nonsmooth stochastic problems, existing saddle avoidance results rely on UE, and in addition, 
work on a tilted (perturbed) objective function that differs from the original problem; (iii) Existing analyses mostly rely on fresh samples that are independent of the past and hence generate martingale-difference noises, whereas the practically used without-replacement schemes such as random reshuffling produce dependent and conditionally biased stochastic gradients. Next, we expand upon the mentioned limitations and provide further details.

\paragraph{Limitation 1: Unit excitation.} A common approach in stochastic saddle avoidance theory is to impose nondegenerate noise conditions that keep the stochastic errors ``excited'' along escaping directions \cite{pemantle1990nonconvergence,benaim1996dynamical,benaim1999dynamics,mertikopoulos2020almost,BianchiHachemSchechtman2023,DavisDrusvyatskiyJiang2025}. Let $\Exp_k[\cdot]$ denote the conditional expectation with respect to the randomness generated before iteration $k$. The \emph{unit excitation} condition\footnote{Some works instead assume a uniform noise condition, for instance $e^k \sim \mathrm{Unif}(B_r(0))$ for some fixed $r>0$.} requires that there exists a constant $b>0$ such that the stochastic errors $\{e^k\}_{k\in\N}$ satisfy 
\begin{equation}\label{eq:assumption UE}
{\inf}_{k\in\N} \;\Exp_k\big[\max\{0,\langle e^k,v\rangle\}\big]\ge b
\qquad \text{for every unit vector } v\in\Rd. \tag{UE}
\end{equation}  
Near a strict saddle, \eqref{eq:assumption UE} ensures excitation along unstable directions. This mechanism underlies many stochastic avoidance proofs,
but it does not match several standard regimes in stochastic optimization, making the \eqref{eq:assumption UE} condition an oversimplification. 

First, \eqref{eq:assumption UE} excludes vanishing stochastic noise. Since
$\max\{0,\langle e^k,v\rangle\}\le \|e^k\|$, the condition cannot hold when
$\Exp_k[\|e^k\|]$ becomes arbitrarily small. This is incompatible with
variance reduction methods such as SVRG and SAGA, whose gradient estimators
become increasingly accurate near stationary points
\cite{johnson2013accelerating,xiao2014proximal,defazio2014saga}. The same
issue also appears when applying classical SGD in an interpolation setting \cite{schmidt2013fast,vaswani2019fast,gower2021sgd}. In particular, for an expected loss $f(x)=\Exp_\xi[f(x;\xi)]$, interpolation is said to hold at a stationary point $x^*$ if $\nabla f(x^*;\xi)=0$ for almost every $\xi$. Hence, the stochastic noise $e^k$ vanishes as $x^k\to x^*$, contradicting \eqref{eq:assumption UE}.
Second, \eqref{eq:assumption UE} requires nondegeneracy in all directions. Even when the stochastic noise does not vanish, finite-sum sampling may not excite all directions. Consider the problem $f(x):=\frac1n\sum_{i=1}^n f_i(x)$ and SGD with index sampling. In this case, the stochastic error is given by $\nabla f_i(x)-\nabla f(x)$, $i \in \{1,\dots,n\}$. These errors lie in the span of the component gradients at $x$, i.e.,
\[
    \cS(x):=\operatorname{span}\{\nabla f_i(x)-\nabla f(x):i=1,\ldots,n\}.
\]
Hence, the noise associated with any mini-batch lies in $\cS(x)$, with dimension $\dim \cS(x)\le n-1$. When $n \leq d$, there are directions in
$\Rd$ orthogonal to $\cS(x)$, invalidating \eqref{eq:assumption UE}. Meanwhile, overparameterization is common in modern machine
learning. For example, the ImageNet benchmark contains about
$10^6$ training images
\cite{deng2009imagenet}, whereas standard ViT models have more than
$10^7$ trainable parameters \cite{dosovitskiy2020image}.

\paragraph{Limitation 2: Nonsmooth stochastic objectives.}
Classical strict saddle theory naturally relies on Hessian information, but this route is not directly available for nonsmooth objectives. In the deterministic setting, Davis and Drusvyatskiy \cite{DavisDrusvyatskiy2022} addressed this difficulty using active manifold theory and showed that proximal-type methods avoid \emph{active strict saddle points}\footnote{Here ``active'' refers to the active manifold identified by the method, and the saddle structure is defined relative to that manifold.}. Combined with iterate convergence guarantees \cite{AttBolSva13}, this yields convergence to a local minimizer under the strict saddle property.

The stochastic nonsmooth case has an additional difficulty: the iterate convergence theory. This is because when an algorithm itself is not convergent, then ruling out convergence to strict saddles becomes less informative. For algorithms such as the standard proximal stochastic gradient method, current convergence theory often gives only subsequential stationarity \cite{LiMil22}, i.e., every accumulation point is stationary. However, convergence of the whole iterate sequence remains unknown. Assumptions such as isolated stationary points can ``upgrade'' subsequential stationarity to iterate convergence \cite{BianchiHachemSchechtman2023}, but they require global information that is often hard to verify. A different approach is to perturb the objective by a generic linear tilt\footnote{A tilted function is a linear perturbation of the form $f_v(x):=f(x)-\langle v,x\rangle$ for some $v\in\Rd$.}. For a generic tilt, the stationary structure becomes simpler: the tilted objective has finitely many stationary points and satisfies a strict saddle property. Davis et al.\ \cite{DavisDrusvyatskiyJiang2025} used this idea to prove convergence of stochastic nonsmooth algorithms to local minimizers of tilted objectives.
This is a powerful way to bypass the convergence difficulty, but the conclusion is for a perturbed objective that is not merely a theoretical intermediate object: it changes the optimization problem and hence the algorithm actually being run. However, this leaves a complementary question: Can a stochastic nonsmooth method converge to a local minimizer of the original objective, without relying on unit excitation, a generic tilt, or related nondegeneracy assumptions?

\paragraph{Limitation 3: Without-replacement sampling.}
A third gap comes from the sampling scheme. Existing analyses mostly rely on fresh samples that are independent of the past, which allows them to utilize the martingale-difference structure of the noises. However, this sampling model does not always reflect common practice. In neural network training, one often reshuffles the dataset at the beginning of each epoch, then processes each sample once. This without-replacement scheme is common in practice and is supported by data-loading pipelines in PyTorch \cite{paszke2019pytorch} and TensorFlow \cite{abadi2016tensorflow}. Furthermore, it has motivated a substantial body of work on convergence properties for the random reshuffling method \cite{gurbuzbalaban2015random,nagaraj2019sgd,safran2019good,rajput2020closing,mishchenko2020random,nguyen2020unified}. However, the stochastic gradients of random reshuffling are dependent and are generally biased estimators of the full gradient. Therefore, classical frameworks cannot explain the saddle avoidance of random reshuffling, even if \eqref{eq:assumption UE} is satisfied.
To our knowledge, the closest result is the local saddle escape theorem by Beneventano \cite{Beneventano2024TrajectoriesSGDWithoutReplacement}. It was shown that the bias accumulated by without-replacement sampling can push the trajectory away from a given saddle, provided a nonzero projection condition holds. For $f(x)=\frac1n \sum_{i=1}^n f_i(x)$, the condition takes the form:
\begin{equation}\label{eq:rr-nonzero-projection}
    \frac1n{\sum}_{i=1}^n
    \langle v,\nabla^2 f_i(x^*)\nabla f_i(x^*)\rangle \ne 0.
\end{equation}
Here, $x^*$ is the saddle point and $v$ is an eigenvector corresponding to a negative eigenvalue of
$\nabla^2 f(x^*)$. On the one hand, this condition is difficult to verify
because it depends on sample-wise second-order information at the saddle. On the other hand, it can fail for structural reasons. For example, under an
interpolation condition, i.e., $\nabla f_i(x^*)=0$ for all $i\in \{1,\dots,n\}$, \eqref{eq:rr-nonzero-projection} fails. Interpolation is typically satisfied for over-parameterized models
\cite{schmidt2013fast,vaswani2019fast,gower2021sgd}.

\subsection{Our solutions and contributions}
We study the following stochastic recursion driven by sampled update fields:
\begin{equation}\label{eq:intro-stochastic-recursion}
    z^{k+1}=z^k-\alpha_kG(z^k;\xi^k),
\end{equation}
where $z^k$ is the state variable, $\xi^k$ denotes the oracle randomness, and
$G(\cdot;\xi^k)$ is the sampled update field. The associated mean field is
denoted by $F:\Rd\to\Rd$. In this case, the ``saddle points'' correspond to the unstable zeros of $F$, the points $z^*$ at which $F(z^*)=0$ while
${\rm D}F(z^*)$ has an eigenvalue with negative real part. With suitable
choices of the state variable and oracle, this formulation covers a large variety of stochastic
algorithms, such as stochastic mirror descent, proximal-type
stochastic gradient methods, and random reshuffling.

The difficulty in removing the long-standing excitation assumption is that
\eqref{eq:assumption UE} provides the key escape mechanism in existing stochastic
avoidance proofs. Near a strict saddle, it guarantees a nontrivial noise
projection onto any unstable direction, so the iterates are repeatedly perturbed and move away from the saddle. Without this directional excitation, avoidance needs to be
established via a completely different mechanism. One possible way is to show that the set of initial states whose trajectories
remain trapped near an unstable zero lies in a Lebesgue null set.

This is the measure-zero basin principle used in deterministic saddle
avoidance. In the work of Lee et al.\ \cite{LeePPSJR2019}, the authors study a map
$T$ that fixes the saddle, $T(z^*)=z^*$, and the unstable directions are from ${\rm D}T(z^*)$. The center-stable manifold theorem then implies that the basin of $z^*$ has Lebesgue measure zero. The results in \cite{LeePPSJR2019} are stated for constant step-sizes and were later extended to diminishing step-sizes \cite{panageas2019first,mucsat2026non}. These deterministic variants preserve the same fixed-point geometry: the maps $z\mapsto z-\alpha_kF(z)$ all share
$z^*$, with $F(z^*)=0$, as a common fixed point, and their local behavior is
governed by ${\rm D}F(z^*)$. 

The stochastic recursion \eqref{eq:intro-stochastic-recursion} does not have
this fixed-point geometry. Even if $F(z^*)=0$, the sampled field typically
\emph{does not} satisfy $G(z^*;\xi^k)=0$ for all $k$. Consequently, $z^*$ is generally not a fixed point of the sampled update maps, and their derivatives vary with the oracle realization. Thus, the path of sampled maps has \emph{neither a common fixed point nor a common
linearization} from which to study the instability of the dynamics.

Our first step is to fix the oracle realization. This turns the stochastic recursion into a deterministic \emph{nonautonomous system}, but it does not restore the fixed-point geometry: the sampled maps still do not share the fixed point $z^*$, and their linear parts remain sample-dependent. To recover a usable local structure, we apply a path-dependent change of variables that removes the accumulated random linear perturbation from the leading dynamics. In the transformed coordinates, the leading behavior is governed by ${\rm D}F(z^*)$. Thus, the stable, unstable, and center subspace splitting of the mean field becomes available. We then use a Lyapunov-Perron-based construction to characterize the initial states whose \emph{transformed trajectories} can remain trapped near the unstable zero. These states lie on a Lipschitz graph and hence form a Lebesgue null set. Finally, by Fubini's theorem, we conclude that 
\[ \Prob\;\!({\lim}_{k\to\infty}\,z^k\in\cZ^*)=0, \] 
where $\cZ^*$ is the set of unstable zeros of $F$. We apply this analysis framework to various algorithms to address the three mentioned limitations.

\begin{table}[t]
\centering
{\footnotesize
\setlength{\tabcolsep}{3.0pt}
\NiceMatrixOptions{cell-space-limits=3pt}
\begin{NiceTabular}{|p{3.4cm}|p{2cm}|p{2.5cm}|p{2.1cm}|p{1.55cm}|p{2.5cm}|}%
\toprule
\Block{1-1}{\textbf{framework / method}} &
\Block{1-1}{\textbf{no UE-type requirements}} &
\Block{1-1}{\textbf{nonsmooth-type objectives}} &
\Block{1-1}{\textbf{allows w/o replacement}} &
\Block{1-1}{\textbf{saddle avoidance}} &
\Block{1-1}{\textbf{references}} \\ \Hline
\Block{1-1}{Deterministic smooth map framework} &
\Block{1-1}{\cmarkt} &
\Block{1-1}{\xmarkt} &
\Block{1-1}{--} &
\Block{1-1}{\cmarkt} &
\Block{1-1}{\cite{LeePPSJR2019,panageas2019first,mucsat2026non}} \\ \Hline
\Block{1-1}{Deterministic proximal framework} &
\Block{1-1}{\cmarkt} &
\Block{1-1}{\cmarkt} &
\Block{1-1}{--} &
\Block{1-1}{\cmarkt} &
\Block{1-1}{\cite{DavisDrusvyatskiy2022}} \\ \Hline
\Block{1-1}{Stochastic approximation methods} &
\Block{1-1}{\xmarkt} &
\Block{1-1}{\xmarkt} &
\Block{1-1}{\xmarkt} &
\Block{1-1}{\cmarkt} &
\Block{1-1}{\cite{pemantle1990nonconvergence,benaim1996dynamical,benaim1999dynamics,mertikopoulos2020almost,liuYuan2023almostSureSaddleAvoidance}} \\ \Hline
\Block{1-1}{Stochastic subgradient method} &
\Block{1-1}{\xmarkt} &
\Block{1-1}{\cmarkt} &
\Block{1-1}{\xmarkt} &
\Block{1-1}{\cmarkt} &
\Block{1-1}{\cite{BianchiHachemSchechtman2023}} \\ \Hline
\Block{1-1}{Stochastic proximal-type framework} &
\Block{1-1}{\xmarkt} &
\Block{1-1}{\cmarkt} &
\Block{1-1}{\xmarkt} &
\Block{1-1}{\cmarkt} &
\Block{1-1}{\cite{DavisDrusvyatskiyJiang2025}} \\ \Hline
\Block{1-1}{Random reshuffling method} &
\Block{1-1}{\xmarkt} &
\Block{1-1}{\xmarkt} &
\Block{1-1}{\cmarkt} &
\Block{1-1}{\;\xmarkt\textsuperscript{\textdagger}} &
\Block{1-1}{\cite{Beneventano2024TrajectoriesSGDWithoutReplacement}} \\ \Hline
\Block{1-1}{Stochastic recursion framework} &
\Block{1-1}{\cmarkt} &
\Block{1-1}{\cmarkt} &
\Block{1-1}{\cmarkt} &
\Block{1-1}{\cmarkt} &
\Block{1-1}{This paper} \\
\bottomrule
\end{NiceTabular}
}
\vspace{1ex}
\caption{Comparison of saddle avoidance results. The ``no UE-type requirements'' column indicates whether the analysis avoids unit-excitation or analogous excitation assumptions. The ``nonsmooth-type objectives'' column indicates whether the analysis covers nonsmooth objectives. The ``allows w/o replacement'' column indicates whether the analysis applies to methods under a without-replacement sampling scheme, such as random reshuffling. \textdagger\,\cite[Section 6]{Beneventano2024TrajectoriesSGDWithoutReplacement} shows that, when initialized at a strict saddle point, the RR trajectory moves away from it. This is different from the standard saddle avoidance notion: for almost every initialization, the iterates do not converge to a strict saddle.}
\label{tab:literature-comparison}
\end{table}

\begin{itemize}
    \item First, the proposed framework covers stochastic mirror descent, with standard SGD as the Euclidean special case. Under the relative smoothness condition that includes the standard Lipschitz smoothness as a special case, stochastic
    mirror descent takes the form \eqref{eq:intro-stochastic-recursion} in the
    dual variable $z=\nabla h(x)$. We prove that it avoids strict saddles
    almost surely; see \cref{cor:smd-avoids-strict-saddles}. The stochastic
    oracle assumptions used in this result are verifiable in standard
    models. For example, they hold for finite-sum objectives with random index sampling, and
    for general expected loss with sub-Gaussian noise. In particular, the
    \eqref{eq:assumption UE} condition is not required to be satisfied. As a by-product, when the noise is absent, the avoidance result extends deterministic mirror descent saddle avoidance from
    Lipschitz smoothness to relative smoothness.

    \item Second, for composite objectives, we study a normal map-based proximal stochastic gradient method \cite{qiu2025normal}. The goal is to obtain convergence to local minimizers of the original nonsmooth objective function, rather than of a generically tilted one. The normal map-based perspective allows the method to be interpreted as a stochastic recursion of the form \eqref{eq:intro-stochastic-recursion}, and active manifold calculus identifies active strict saddles of the original objective with unstable zeros of the normal map field. We prove almost sure non-convergence to active strict saddles; see \cref{cor:nsgd-avoid-active-saddles}. Combining this avoidance result with iterate convergence guarantees \cite{qiu2025normal} and the active strict saddle property yields convergence to local minimizers of the original objective. The avoidance argument uses no generic tilt, no unit excitation, and no isolated stationary set assumption.

    \item Third, the framework also covers without-replacement sampling. In particular, random reshuffling (RR) satisfies the required stochastic error conditions even though the stochastic gradients are dependent and are generally not martingale differences. Our framework establishes the almost sure non-convergence of RR to strict saddles; see \cref{cor:rr-avoids-strict-saddles}. To the best of our knowledge, this is the first asymptotic almost sure strict saddle avoidance result for RR\@. Since the oracle conditions are formulated for general update fields, the same approach may also be useful for normal map-based nonsmooth extensions of RR \cite{qiu2023new}.
\end{itemize}

\subsection{Notation and Organization}
We write $\N:=\{0,1,2,\ldots\}$ and use $\|\cdot\|$ for the Euclidean norm and the induced operator norm. For $x\in\Rd$ and $r>0$, let $
B_r(x):=\{y\in\Rd:\|y-x\|<r\}$. 
The symbol $\mu$ denotes the Lebesgue measure on $\Rd$. When we say that an
initial point is drawn from a distribution with a density, we mean that its
distribution is absolutely continuous with respect to $\mu$. For a
differentiable map $F$, ${\rm D}F(x)$ denotes its derivative at $x$. For a matrix $A$,
$\operatorname{spec}(A)$ denotes its spectrum and $\Re(\lambda)$ is
the real part of $\lambda$.

The remainder of the paper is organized as follows. In \cref{sec:framework}, we
introduce the stochastic recursion, standing assumptions, and
the general saddle avoidance theorem. This theorem is built on a local
trapping theorem, which serves as a replacement for the center-stable manifold theorem used in deterministic analyses. In
\cref{sec:local-trapped-proof}, we prove \cref{thm:local-stable-null}. In \cref{sec:application-sa-methods}, we verify
the framework for methods based on stochastic approximation, including stochastic
mirror descent and a normal map-based stochastic proximal method. In
\cref{sec:application-rr}, we show that our framework can also be applied to RR, whose
sampling scheme is different from algorithms presented in 
\cref{sec:application-sa-methods}.

\section{Framework and theory}
\label{sec:framework}
This section develops a framework for stochastic recursions and states the general saddle avoidance theorem. We first show how several common
algorithms fit this form in \cref{subsec:examples}. We then specify the
probability spaces for the initialization and the oracle randomness, together
with the stochastic oracle assumptions, in
\cref{subsec:probability-oracle-assumptions}. The main avoidance results are
stated in \cref{subsec:main-avoidance-results}.
\subsection{Stochastic recursion}
Let $F:\Rd\to\Rd$ be the vector field whose zeros are the points of interest. Given an oracle $G:\Rd\times\Xi\to\Rd$ for $F$, we study the recursion
\begin{equation}\label{eq:stochastic-fixed-point-method}
z^{k+1}=z^k-\alpha_kG(z^k;\xi^k),\qquad \forall\,k\ge0.
\end{equation}
This stochastic recursion encompasses several standard stochastic methods, as
illustrated in \cref{subsec:examples}. Our goal is to show that
\eqref{eq:stochastic-fixed-point-method} avoids the unstable zeros of $F$. The formal definition is given below.
\begin{definition}[Unstable zeros]
A point $z^*\in\Rd$ is an \emph{unstable zero} of $F$ if
\begin{equation}
\label{eq:unstable-zero}
F(z^*)=0
\qquad \text{and} \qquad
\min\{\Re(\lambda):\lambda\in\operatorname{spec}({\rm D}F(z^*))\}<0.
\end{equation}
Denote the set of unstable zeros by 
\[\cZ^*:=\{z^*\in\Rd:z^*\text{ is an unstable zero of }F\}.\]
\end{definition}

\begin{assumption}
\label{assumption:F}
We impose the following assumptions on the mean field
and the sample maps.
\begin{itemize}
    \item\emph{(regularity).} The vector field $F:\Rd\to\Rd$ is $\cC^1$ in a neighborhood of $\cZ^*$.
    \item\emph{(local equicontinuity).} For every $z^*\in\cZ^*$, there exists an open neighborhood $\cU_{z^*}$ of $z^*$ such that $G(\cdot;\xi)$ is differentiable on $\cU_{z^*}$ for every $\xi\in\Xi$, and the family
    $\{{\rm D}G(\cdot;\xi)\}_{\xi\in\Xi}$ is equicontinuous on compact subsets of $\cU_{z^*}$: for every compact set $\mathcal Q\subset\cU_{z^*}$ and every $\varepsilon>0$, there exists $\delta>0$ such that
    \[
        \|{\rm D}G(z;\xi)-{\rm D}G(w;\xi)\|\leq\varepsilon
    \]
    for all $\xi\in\Xi$ and all $z,w\in\mathcal Q$ with
    $\|z-w\|\leq\delta$.
    \item\emph{(lipeomorphism).} There exists $\tilde{\alpha}>0$ such that, for every
    $0<\alpha\le\tilde{\alpha}$ and every $\xi\in\Xi$, the map
    \[
    T_{\alpha,\xi}(z):=z-\alpha G(z;\xi)
    \]
    has a well-defined inverse on $\Rd$, and both $T_{\alpha,\xi}$ and
    $T_{\alpha,\xi}^{-1}$ are locally Lipschitz.
\end{itemize}
\end{assumption}

\begin{remark}[Relation to assumptions in deterministic case]
In the deterministic case $G(\cdot;\xi)\equiv F(\cdot)$, the local equicontinuity condition follows from the continuity of ${\rm D}F$ near $\cZ^*$, and the lipeomorphism condition reduces to the usual requirement that the update map
$z\mapsto z-\alpha_kF(z)$ is invertible with a locally Lipschitz inverse. Hence, these conditions play the same role as the diffeomorphism assumptions used for deterministic first-order methods. Therefore,
\cref{assumption:F} does not impose stronger requirements than the standard deterministic setting \cite{LeePPSJR2019,DavisDrusvyatskiy2022,panageas2019first,mucsat2026non}.
\end{remark}
 
\begin{remark}[Detailed discussions of \cref{assumption:F}]\;
    \begin{itemize}
        \item The regularity condition is local. For smooth stochastic gradient methods, it holds whenever the objective is $\cC^2$ near stationary points. For nonsmooth problems, the local smoothness is typically satisfied after applying active manifold identification theory. Indeed, the objective is $\cC^2$ along the identified manifold \cite{lewis2002active,hare2004identifying,DavisDrusvyatskiy2022}.
\item The local equicontinuity condition does not require the sample maps to be differentiable globally. It only requires that, near each unstable zero, sample maps be differentiable on an open neighborhood and that the family of their Jacobians be equicontinuous on compact subsets of that neighborhood. Consequently, the framework can accommodate nonsmooth optimization problems whenever a suitable local reformulation yields sample maps with these properties. This condition is automatic for finite-sum models $
f(x)=\frac1n\sum_{i=1}^n f_i(x)$ 
when the component Hessians $\nabla^2 f_i$ are locally Lipschitz. Equicontinuity also holds for empirical-risk losses of the form
$f(x;\xi)=\ell(a^\top x;b)$ with bounded feature vectors. Indeed, as $\nabla^2 f(x;\xi)=\ell''(a^\top x;b)aa^\top$, if $\|a\|$ is uniformly bounded and $t\mapsto \ell''(t;b)$ is locally
Lipschitz uniformly in $b$, then the sample Hessians are uniformly locally
Lipschitz on compact parameter sets. This covers least squares and binary
logistic regression. Similar arguments also extend to
multinomial logistic regression and smooth generalized linear models
\cite{bottou2018optimization,Bach2010,hastibfri09}.
\item The lipeomorphism condition is a mild invertibility requirement on the update maps. One common way to verify it is through a uniform Lipschitz bound on
the oracle. That is, if
\[
    \|G(z;\xi)-G(w;\xi)\|\le \sL\|z-w\|\qquad \text{for all $z,w$ and all $\xi$,}
\]
then $I-\alpha G(\cdot;\xi)$ is bi-Lipschitz for all $\alpha\in(0,1/\sL)$. Thus, in many first-order methods, this condition
reduces to the usual ``small-step-size requirement''. In this setting, SGD works as an immediate example. In the forthcoming sections, we also verify the lipeomorphism property for proximal-type stochastic gradient methods, stochastic mirror descent, and random reshuffling.
    \end{itemize}
\end{remark}

\subsection{Examples} \label{subsec:examples}
Since our main focus is saddle avoidance for stochastic methods, we present
several instances that fit the stochastic recursion \eqref{eq:stochastic-fixed-point-method}. 
\begin{example}[Stochastic gradient methods]
Consider $\min_{x\in\Rd} f(x)$, where $f(x):=\Exp_\xi[f(x;\xi)]$. Taking $
F(\cdot)=\nabla f(\cdot)$ and $G(\cdot;\xi)=\nabla f(\cdot;\xi)$, the formulation
\eqref{eq:stochastic-fixed-point-method} becomes the standard stochastic gradient recursion
\[
z^{k+1}=z^k-\alpha_k\nabla f(z^k;\xi^k).
\]
Note that minibatch variants are included by letting $\xi^k$ encode the sampled batch.
\end{example}
 
\begin{example}[Stochastic mirror descent]
Let $f$ be the objective function and let $h$ be a Legendre distance-generating function. Stochastic mirror descent performs the dual update 
\[
z^{k+1}=z^k - \alpha_k \nabla f(x^k;\xi^k)=z^k-\alpha_k \nabla f(\nabla h^*(z^k);\xi^k),
\]
where we used the correspondences $z=\nabla h(x)$ and
$x=\nabla h^*(z)$; see \cite[Theorem~26.5]{Rockafellar1970ConvexAnalysis}.
This is of the form \eqref{eq:stochastic-fixed-point-method} with $
G(\cdot;\xi)=\nabla f(\nabla h^*(\cdot);\xi)$ and $F(\cdot)=\nabla f(\nabla h^*(\cdot))$.
\end{example}

\begin{example}[Normal map stochastic gradient]
We consider the composite problem 
\[{\min}_{x\in\Rd}~f(x)+\vp(x),\]
where $f:\Rd\to\R$ is differentiable and $\vp:\Rd\to(-\infty,+\infty]$ is a possibly nonsmooth (convex, proper, and lower semicontinuous) function with a computable proximal map. Fix $\lambda>0$, and let $\nabla f(x;\xi)$ denote a stochastic approximation of $\nabla f$ at $x$. Then, the
normal map stochastic gradient step \cite{qiu2025normal} is given by
\[
z^{k+1}
=
z^k-\alpha_k\left[\nabla f(P_\lambda(z^k);\xi^k)
+\lambda^{-1}\bigl(z^k-P_\lambda(z^k)\bigr)\right]\qquad \text{where}\qquad P_\lambda(z):=\prox{\lambda\vp}(z).
\]
This is an instance of the stochastic recursion \eqref{eq:stochastic-fixed-point-method} with
\[
G(z;\xi):=\nabla f(P_\lambda(z);\xi)+\lambda^{-1}(z-P_\lambda(z))
\qquad \text{and} \qquad 
F(z):=\nabla f(P_\lambda(z))+\lambda^{-1}(z-P_\lambda(z)).
\]
\end{example}
 
The preceding examples use one stochastic oracle call per iteration. Random
reshuffling is based on a different sampling mechanism: at epoch $k$, the seed is a
full permutation, and the component gradients are evaluated sequentially without
replacement.
 
\begin{example}[Random reshuffling]
Consider the finite-sum problem
\[
    {\min}_{x\in\Rd}~f(x):=\frac1n{\sum}_{i=1}^n f_i(x).
\]
The associated mean field in this case is simply $F=\nabla f$. 
Let $\eta_k>0$ be the step-size and let $\pi^k$ denote a
permutation of $\{1,\ldots,n\}$. Random
reshuffling (RR) initializes $y_0^k:=x^k$ and performs the following updates
\[
x^{k+1}
=
x^k-\eta_k{\sum}_{i=1}^n\nabla f_{\pi_i^k}(y_{i-1}^k)\qquad \text{where} \qquad y_i^k
=
y_{i-1}^k-\eta_k\nabla f_{\pi_i^k}(y_{i-1}^k),\qquad i=1,\ldots,n.
\]
Setting $\xi^k:=(\eta_k,\pi^k)$, we define the intermediate iterates within one epoch for a candidate point $x$ via:
\[
 y_0(x;\xi^k):=x\qquad \text{and} \qquad y_i(x;\xi^k)
    :=
    y_{i-1}(x;\xi^k)
    -\eta_k\nabla f_{\pi_i^k}(y_{i-1}(x;\xi^k)),
    \qquad i=1,\ldots,n.
\]
Furthermore, setting $
    G(x;\xi^k):=
    \frac1n{\sum}_{i=1}^n
    \nabla f_{\pi_i^k}(y_{i-1}(x;\xi^k))$ and $\alpha_k:=n\eta_k$, we can write RR's update as $
x^{k+1}=x^k-\alpha_kG(x^k;\xi^k)$ as desired.
\end{example}

\subsection{Probability spaces and oracle assumptions}
\label{subsec:probability-oracle-assumptions}
We separate the randomness in the initial point from the randomness in the
oracle calls. This allows us to later fix a seed realization and apply the local dynamical argument pathwise.

\begin{definition}[Probability model and measurability]
\label{def:probability-model}
Let $(\Omega_0,\cF_0,\Prob_0)$ support the initial point $z^0\sim\D$, where
$\D$ is any distribution that is absolutely continuous with respect to $\mu$. Let
$(\Omega_\xi,\cF_\xi,\Prob_\xi)$ support the seed sequence
$\{\xi^k\}_{k}$, where each $\xi^k$ takes values in a measurable space
$(\Xi,\cA)$. Its natural filtration is given by
\[
\cF_k^\xi:=\sigma(\xi^0,\dots,\xi^{k-1}),
\qquad \text{and} \qquad
\cF_0^\xi:=\{\emptyset,\Omega_\xi\}.
\]
We assume that $z^0$ and $\{\xi^k\}_{k\ge0}$ are independent and work on the
product space
\[
(\Omega,\cF,\Prob)
:=
(\Omega_0\times\Omega_\xi,\cF_0\otimes\cF_\xi,\Prob_0\otimes\Prob_\xi).
\]
We use the same symbols $z^0$ and $\xi^k$ on $\Omega$, and set $
    \cF_k:=\sigma(z^0)\vee\cF_k^\xi$. We assume that
$ (z,\xi)\mapsto G(z;\xi)$ is measurable from $\Rd\times\Xi$, equipped with
$\mathcal B(\Rd)\otimes\cA$, to $\Rd$.
Moreover, for every $z^*\in\cZ^*$, we assume that the restriction
$(z,\xi)\mapsto {\rm D}G(z;\xi)$ is measurable from
$\cU_{z^*}\times\Xi$, equipped with
$\mathcal B(\cU_{z^*})\otimes\cA$, to $\R^{d\times d}$.
Hence, $G(z;\xi^k)$ is a well-defined random vector for every fixed $z\in\Rd$,
and ${\rm D}G(z;\xi^k)$ is a well-defined random matrix whenever
$z\in\cU_{z^*}$ for some $z^*\in\cZ^*$.
\end{definition}

The next assumption imposes pathwise regularity on the stochastic oracle near an
unstable zero of the mean field $F$. 
 
\begin{assumption}[Stochastic oracle]
\label{assumption:stochastic-oracle} Let $\bar z\in\cZ^*$ be arbitrary. 
Denote 
\begin{equation}\label{eq:b_k and J_k}
b_k:=G(\bar z;\xi^k)-F(\bar z),\qquad  J_k:={\rm D}G(\bar z;\xi^k)-{\rm D}F(\bar z)\qquad \text{and} \qquad S_k:={\sum}_{i=k}^{\infty}\alpha_iJ_i.    
\end{equation}
Assume that there exists an event $\cE_{\rm reg}(\bar z)\in\cF_\xi$ with
$\Prob_\xi(\cE_{\rm reg}(\bar z))=1$ such that, on $\cE_{\rm reg}(\bar z)$, we have
\[
    S_k\to0,\qquad S_{k+1}J_k\to0,\qquad
    {\sum}_{k=0}^{\infty}\alpha_k b_k \quad \text{and}\quad 
    {\sum}_{k=0}^{\infty}\alpha_k S_{k+1}b_k \;\, \text{converge}.
\]
\end{assumption}
This assumption controls the realized value errors $b_k$ and linearization errors $J_k$ through the accumulated quantities in \eqref{eq:b_k and J_k}. In the deterministic case $G(\cdot;\xi)\equiv F(\cdot)$, one has $b_k=0$, $J_k=0$, and $S_k=0$, so the
condition holds automatically. In the stochastic case, the assumption only requires these errors to have controlled weighted tails. This is not related to the unit excitation condition \eqref{eq:assumption UE}. Indeed, it allows the noise to vanish at $\bar z$ or to lie in a lower-dimensional subspace. Although this pathwise formulation is less common in classical stochastic approximation, it can be
verified in several basic sampling models. We first consider the stochastic approximation with finite moments.

\begin{proposition}[Martingale-type noise]
\label{lem:iid-moment-bounds-imply-stochastic-oracle}
Let $\{(b_k,J_k)\}_{k}$ be defined as in \eqref{eq:b_k and J_k}. Assume $\{\xi^k\}_{k}$ are i.i.d.\ on $(\Omega_\xi,\cF_\xi,\Prob_\xi)$ and satisfy, for some $q>2$,
\[
\Exp[b_k]=0,\qquad
\Exp[\|b_k\|^2]<\infty,\qquad
\Exp[J_k]=0,\qquad
\Exp[\|J_k\|_F^q]<\infty.
\]
Moreover, assume that $\sum_{k=0}^{\infty} r_{k+1}^{q/2}<\infty$ with $r_k=\sum_{i=k}^{\infty}\alpha_i^2$. Then \cref{assumption:stochastic-oracle} holds.
\end{proposition}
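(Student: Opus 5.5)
The plan is to verify the four requirements of \cref{assumption:stochastic-oracle} one at a time. I take $\cE_{\rm reg}(\bar z)$ to be the intersection of the four resulting probability-one events. First note that the summability $\sum_k r_{k+1}^{q/2}<\infty$ forces each $r_k$ to be finite, so $\sum_k\alpha_k^2<\infty$ and $r_k\downarrow0$. Since the $\xi^k$ are i.i.d., the pairs $(b_k,J_k)$ are i.i.d. I write $\sigma_b^2:=\Exp\|b_k\|^2$ and $m_q:=\Exp\|J_k\|_F^q$.

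\textbf{Step 1: the two first-order series.} The partial sums of $\sum_k\alpha_kJ_k$ and of $\sum_k\alpha_kb_k$ are $\cF^\xi_k$-martingales with independent mean-zero increments. Their second moments are bounded by $\sum_k\alpha_k^2\Exp\|J_k\|_F^2<\infty$ and $\sigma_b^2\sum_k\alpha_k^2<\infty$, respectively; here $\Exp\|J_k\|_F^2<\infty$ because $q>2$. By the $L^2$ martingale convergence theorem, both series converge almost surely. Hence $\sum_k\alpha_kb_k$ converges, and $S_k$, being a tail of a convergent series, tends to $0$ almost surely.

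\textbf{Step 2: a moment bound for $S_k$, and $S_{k+1}J_k\to0$.} I would apply the Burkholder--Davis--Gundy (or Rosenthal) inequality to the martingale $\sum_{i=k}^N\alpha_iJ_i$ and let $N\to\infty$. Minkowski's inequality in $L^{q/2}$ applies since $q/2>1$, and gives
\[
\Exp\|S_k\|^q\le C_q\,\Exp\Big({\sum}_{i\ge k}\alpha_i^2\|J_i\|_F^2\Big)^{q/2}\le C_q\Big({\sum}_{i\ge k}\alpha_i^2\|J_i\|_{F,L^q}^2\Big)^{q/2}= C_q\,m_q\,r_k^{q/2}.
\]
The matrix $S_{k+1}$ is measurable with respect to $\sigma(\xi^{k+1},\xi^{k+2},\dots)$, so it is independent of $J_k$. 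Therefore $\Exp\|S_{k+1}J_k\|^q\le \Exp\|S_{k+1}\|^q\,\Exp\|J_k\|^q\le C_q m_q^2\,r_{k+1}^{q/2}$. This is summable by hypothesis. For each $\varepsilon>0$, Markov's inequality and Borel--Cantelli give $\|S_{k+1}J_k\|\le\varepsilon$ eventually, almost surely. Taking $\varepsilon=1/j$ over $j\in\N$ yields $S_{k+1}J_k\to0$ almost surely. Independence together with the $q$-th moment is exactly what makes the hypothesis $\sum_k r_{k+1}^{q/2}<\infty$ the right one.

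\textbf{Step 3: the series $\sum_k\alpha_kS_{k+1}b_k$; this is the main obstacle.} The summands are not adapted to the forward filtration, because $S_{k+1}$ depends on the future. I would therefore use the reversed filtration $\cG_k:=\sigma(\xi^k,\xi^{k+1},\dots)$. Set $X_k:=\alpha_kS_{k+1}b_k$. Then $X_k$ is $\cG_k$-measurable, $S_{k+1}$ is $\cG_{k+1}$-measurable, and $b_k$ is independent of $\cG_{k+1}$ with mean zero. Hence $\Exp[X_k\mid\cG_{k+1}]=0$, so the $X_k$ are reverse martingale differences.

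These differences are orthogonal in $L^2$. For $j<k$, the variable $X_k$ is $\cG_{j+1}$-measurable, so conditioning $\langle X_j,X_k\rangle$ on $\cG_{j+1}$ gives zero. Moreover, using Step 2 with Hölder's inequality (or directly an $L^2$ bound on $S_{k+1}$),
\[
\Exp\|X_k\|^2\le\alpha_k^2\,\Exp\|S_{k+1}\|^2\sigma_b^2\le C\alpha_k^2 r_{k+1}.
\]
Since $r_{k+1}\le r_0$, we get $\sum_k\Exp\|X_k\|^2\le Cr_0^2<\infty$.

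To upgrade this to almost sure convergence, fix $m<N$. Reading indices from $N$ down to $m$, the finite sums $n\mapsto\sum_{k=n}^N X_k$ form a martingale with respect to $(\cG_n)$. Doob's $L^2$ maximal inequality then gives
\[
\Exp\sup_{m\le n\le N}\Big\|{\sum}_{k=n}^NX_k\Big\|^2\le4{\sum}_{k=m}^N\Exp\|X_k\|^2.
\]
Letting $N\to\infty$ by monotone convergence and then $m\to\infty$ shows that the Cauchy oscillation $\sup_{m\le n\le N}\|\sum_{k=n}^NX_k\|$ tends to $0$ in $L^2$. Since this oscillation is monotone in $m$, it also tends to $0$ almost surely. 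Hence $\sum_k\alpha_kS_{k+1}b_k$ converges almost surely, which completes the verification of \cref{assumption:stochastic-oracle}.
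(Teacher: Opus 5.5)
Your proposal is correct. Steps 1 and 2 coincide with the paper's proof. Both use $L^2$-bounded martingales for $\sum_k\alpha_kb_k$ and $\sum_k\alpha_kJ_k$, then Burkholder--Davis--Gundy with the $L^{q/2}$-Minkowski inequality, independence of $S_{k+1}$ and $J_k$, and Markov plus Borel--Cantelli. The paper makes your ``let $N\to\infty$'' precise via Fatou's lemma, since $\sum_{i=k}^N\alpha_iJ_i\to S_k$ only almost surely.

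The genuine difference is Step 3. The paper removes the non-adaptedness of $\alpha_kS_{k+1}b_k$ by exchanging the order of summation. It writes $\sum_{k=0}^n\alpha_kS_{k+1}b_k=M_n+S_{n+1}B_n$, with $B_n=\sum_{k=0}^n\alpha_kb_k$ and the \emph{forward} martingale $M_n=\sum_{i=1}^n\alpha_iJ_iB_{i-1}$. Its increments satisfy $\Exp\|M_n-M_{n-1}\|^2\le\alpha_n^2\,\Exp\|J_0\|_F^2\,\Exp\|B_{n-1}\|^2$, which is summable. Convergence then follows from the ordinary martingale convergence theorem together with $B_n\to B_\infty$ and $S_{n+1}\to0$, both already available from Step 1.

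You instead keep the summands as they are and run time backwards through $\cG_k=\sigma(\xi^k,\xi^{k+1},\dots)$. This needs no summation-by-parts identity and shows directly why the series converges: the summands are orthogonal with $\Exp\|X_k\|^2\lesssim\alpha_k^2r_{k+1}$. The cost is the reverse-martingale setup and a maximal inequality. The paper gets by with the forward convergence theorem and limits it has already established.

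One detail in your last paragraph needs a small repair. For fixed $m$, the quantity $\sup_{m\le n\le N}\|\sum_{k=n}^NX_k\|$ is not monotone in $N$. So monotone convergence does not by itself control the full oscillation over all $m\le n\le N<\infty$. There are two easy fixes:
\begin{itemize}
\item First bound $\sup_{m\le N'\le N}\|\sum_{k=m}^{N'}X_k\|\le2\sup_{m\le n\le N+1}\|\sum_{k=n}^NX_k\|$. The left side is nondecreasing in $N$, so monotone convergence applies, and you finish with the triangle inequality.
\item Alternatively, define $Z_n:=\sum_{k\ge n}X_k$ as an $L^2$ limit, which exists by orthogonality. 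Then $(Z_n)$ is a reverse martingale, and Doob gives $\Exp\sup_{n\ge m}\|Z_n\|^2\le4\sum_{k\ge m}\Exp\|X_k\|^2\to0$.
\end{itemize}
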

The proof is deferred to \cref{app:martingale-type-noise}. The requirements on $b_k$ are the usual 
martingale-type noise conditions used in the stochastic approximation literature. The condition on $J_k$ is less standard, because classical stochastic approximation results usually do not track the sample-wise linearization error. Nevertheless, it is a pointwise finite-moment condition and is easy to verify in many standard models. Typical examples include finite-sum models and stochastic gradient oracles with finite sample Hessians. Detailed verifications and discussions are presented in \cref{sec:application-sa-methods}. 

A second class of examples comes from without-replacement sampling in
finite-sum problems. The standard algorithmic example is random reshuffling, which draws a permutation at the beginning of each epoch and then visits every component once. This sampling rule is common in finite-sum
optimization and has been studied extensively; see, e.g.,
\cite{gurbuzbalaban2015random,nagaraj2019sgd,rajput2020closing,
mishchenko2020random,nguyen2020unified}. The next proposition verifies \cref{assumption:stochastic-oracle} for the stochastic oracle induced by sampling without replacement. The proof is provided
in \cref{app:sampling-without-replacement}. 

\begin{proposition}[Sampling without replacement]
\label{prop:without-replacement-implies-stochastic-oracle}
Let $G_1,\ldots,G_n:\Rd\to\Rd$ be $\cC^1$ maps with locally Lipschitz derivatives, and set $
F:=\frac1n\sum_{i=1}^n G_i$. 
For a step-size $\eta>0$ and a permutation $\pi$ of $\{1,\ldots,n\}$ define the epoch iterates
\[
Z_0(z;\eta,\pi)=z,\qquad
Z_s(z;\eta,\pi)=Z_{s-1}(z;\eta,\pi)-\eta G_{\pi_s}(Z_{s-1}(z;\eta,\pi)),
\quad s=1,\ldots,n,
\]
and the stochastic oracle $G(z;\eta,\pi):=\frac1n{\sum}_{s=1}^{n}G_{\pi_s}(Z_{s-1}(z;\eta,\pi))$. Then we have
\begin{itemize}
    \item For every $\bar\eta>0$, the family $
    \{ {\rm D}G(\cdot;\eta,\pi):0<\eta\le \bar\eta,\ 
    \pi \text{ is a permutation of }\{1,\ldots,n\}\}$ 
is equicontinuous on compact sets.
    \item Let $\{\eta_k\} \subset \R_{++}$ be given with $\sum_{k=0}^\infty\eta_k^2<\infty$, and set
    $\alpha_k:=n\eta_k$. For any sequence of permutations $\{\pi^k\}_k$, \cref{assumption:stochastic-oracle} holds with $\xi^k:=(\eta_k,\pi^k)$ and $G(z;\xi^k):=G(z;\eta_k,\pi^k)$. 
\end{itemize}
\end{proposition}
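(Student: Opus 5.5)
The plan is to expand the epoch map in powers of $\eta$ and show that both the value error $b_k$ and the linearization error $J_k$ are $O(\eta_k)$, uniformly over permutations, on a neighborhood of $\bar z$. After that, every series in \cref{assumption:stochastic-oracle} converges absolutely, for every permutation sequence and hence pathwise. No probabilistic structure is needed.

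\textbf{Step 1: equicontinuity.} Fix a compact set $\mathcal Q$ and $\bar\eta$. By induction on $s$, each $Z_s(\cdot;\eta,\pi)$ is $\cC^1$ and, for $z\in\mathcal Q$ and $\eta\le\bar\eta$, stays in a fixed compact set $\mathcal Q'$. The constant defining $\mathcal Q'$ depends only on $\bar\eta$, $n$, and $\max_i\sup|G_i|$ over enlarged compacts, since the $G_i$ are continuous. The derivatives satisfy the chain rule
\[
{\rm D}Z_s=(I-\eta\,{\rm D}G_{\pi_s}(Z_{s-1})){\rm D}Z_{s-1}.
\]
Induction then gives two facts, both uniform in $\eta\le\bar\eta$ and in $\pi$:
\begin{itemize}
\item uniform bounds on $\|{\rm D}Z_s\|$ on $\mathcal Q$;
\item Lipschitz bounds for $z\mapsto {\rm D}Z_s(z)$ on $\mathcal Q$, using that ${\rm D}G_i$ is Lipschitz on $\mathcal Q'$ and that $Z_{s-1}$ is Lipschitz.
\end{itemize}
There are finitely many permutations, so uniformity in $\pi$ is automatic. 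The formula
\[
{\rm D}G(z;\eta,\pi)=\frac1n\sum_s {\rm D}G_{\pi_s}(Z_{s-1}){\rm D}Z_{s-1}
\]
is therefore uniformly Lipschitz on $\mathcal Q$, which gives equicontinuity.

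\textbf{Step 2: $O(\eta)$ bounds at $\bar z$.} Set $z=\bar z$. From $\|Z_s-\bar z\|\le \eta\sum_{j\le s}\|G_{\pi_j}(Z_{j-1})\|$ we get $\|Z_{s}-\bar z\|\le C\eta$ for $\eta\le\bar\eta:=\sup_k\eta_k$, which is finite since $\eta_k\to0$. Because $\pi$ is a permutation,
\[
\frac1n\sum_s G_{\pi_s}(\bar z)=F(\bar z),
\]
and hence
\[
b_k=\frac1n\sum_s\bigl[G_{\pi_s}(Z_{s-1})-G_{\pi_s}(\bar z)\bigr],\qquad \|b_k\|\le C\eta_k.
\]
The linearization error is handled the same way. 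We have $\|{\rm D}Z_{s-1}-I\|\le C\eta$, and
\[
{\rm D}G_{\pi_s}(Z_{s-1})-{\rm D}G_{\pi_s}(\bar z)=O(\eta)
\]
by the local Lipschitz continuity of the ${\rm D}G_i$. Together with $\frac1n\sum_s{\rm D}G_{\pi_s}(\bar z)={\rm D}F(\bar z)$, this gives $\|J_k\|\le C\eta_k$. The constant $C$ is independent of $k$ and of $\pi^k$.

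\textbf{Step 3: conclude.} Recall $\alpha_k=n\eta_k$. Then
\[
\sum_k\alpha_k\|J_k\|\le Cn\sum_k\eta_k^2<\infty,
\]
so $S_k$ is a well-defined tail of an absolutely convergent series and $S_k\to0$. In particular $S_k$ is bounded, and since $J_k\to0$ we get $S_{k+1}J_k\to0$. Next, $\sum_k\alpha_k\|b_k\|\le Cn\sum_k\eta_k^2<\infty$, so $\sum_k\alpha_kb_k$ converges absolutely. Finally, $\|S_{k+1}\|$ is bounded and $\sum_k\alpha_k\|b_k\|<\infty$, so $\sum_k\alpha_kS_{k+1}b_k$ converges as well. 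All of this holds for every realization, so we may take $\cE_{\rm reg}(\bar z)=\Omega_\xi$.

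The main obstacle is the bookkeeping in Step 1. One must keep all intermediate iterates inside a fixed compact set, uniformly in $\eta\le\bar\eta$, so that the local Lipschitz constants apply. I will handle this by choosing nested compacts $\mathcal Q=\mathcal Q_0\subset\mathcal Q_1\subset\cdots\subset\mathcal Q_n$, where $\mathcal Q_s$ is $\mathcal Q_{s-1}$ enlarged by $\bar\eta\max_i\sup_{\mathcal Q_{s-1}}\|G_i\|$. Step 2 needs only a neighborhood of $\bar z$ and is essentially a special case of the same argument.
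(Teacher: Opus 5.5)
Your proposal is correct and follows essentially the same route as the paper. Equicontinuity comes from bounds on ${\rm D}Z_s$ and Lipschitz bounds on $z\mapsto{\rm D}Z_s(z)$, uniform in $\eta\le\bar\eta$ and $\pi$, over a compact set containing all inner iterates; the permutation identity $\frac1n\sum_s G_{\pi_s}(\bar z)=F(\bar z)$ then gives $\|b_k\|+\|J_k\|=O(\eta_k)$, so every series in \cref{assumption:stochastic-oracle} converges absolutely on the whole seed space. The differences are cosmetic: you trap the inner iterates with nested compact enlargements and $\bar\eta=\sup_k\eta_k$ rather than restricting to large $k$ inside a ball. Also, the paper passes to the convex hull of $\mathcal Q$ to get Lipschitz continuity of $Z_{s-1}$ from the derivative bound, while you should either do the same or prove it by induction using the Lipschitz constants of the $G_i$ on $\mathcal Q'$.
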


\subsection{Main avoidance results}
\label{subsec:main-avoidance-results}
Throughout the remainder of the paper, we assume that the step-sizes satisfy
\[
0<\alpha_k\le\tilde\alpha,\qquad \alpha_k\to 0 
\qquad \text{and} \qquad
{\sum}_{k=0}^{\infty}\alpha_k=\infty.
\]
Here, $\tilde\alpha$ is the step-size threshold in the lipeomorphism condition of \cref{assumption:F}. We first state a local result that characterizes trajectories of \eqref{eq:stochastic-fixed-point-method}. Its proof is deferred to \cref{sec:local-trapped-proof}. 

\begin{theorem}[Locally trapped iterates stay in null sets]\label{thm:local-stable-null}
Suppose \cref{assumption:F,assumption:stochastic-oracle} hold. Fix $z^*\in\cZ^*$.
There exists $\delta_0>0$ such that, for every $\omega_\xi\in\cE_{\rm reg}(z^*)$, there are $m_0\in\N$ and Lebesgue null sets $\{\mathcal N_m\}_{m}$ in $\Rd$ such that, for every $m\ge m_0$, any orbit with
\[
\|z^k-z^*\|\le\delta_0\qquad \forall\,k\ge m 
\]
has to satisfy $z^m\in\mathcal N_m$.
\end{theorem}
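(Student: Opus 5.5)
We fix $\omega_\xi\in\cE_{\rm reg}(z^*)$ and treat the recursion as a deterministic nonautonomous system. Set $A:={\rm D}F(z^*)$ and write $x^k:=z^k-z^*$. Taylor expansion of $G(\cdot;\xi^k)$ at $z^*$ gives
\[
x^{k+1}=x^k-\alpha_k\bigl(b_k+(A+J_k)x^k+\rho_k(x^k)\bigr),
\]
since $F(z^*)=0$. By local equicontinuity, the remainder satisfies $\|\rho_k(x)-\rho_k(y)\|\le\varepsilon(\delta)\|x-y\|$ on $B_\delta(0)$, uniformly in $k$, with $\varepsilon(\delta)\to0$. We choose $\delta_0$ so that $\overline{B_{\delta_0}(z^*)}\subset\cU_{z^*}$ and $\varepsilon(\delta_0)$ is small compared with the spectral gap defined below.

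The first step is the path-dependent change of variables. Put
\[
w^k:=(I+S_k)x^k+c_k,\qquad c_k:=\sum_{i\ge k}\alpha_i(I+S_{i+1})b_i .
\]
The tail $c_k$ converges by \cref{assumption:stochastic-oracle}, and $c_k\to0$. Since $S_k\to0$, the matrix $I+S_k$ is invertible for $k\ge m_0$, with inverse close to $I$. Using $S_k=\alpha_kJ_k+S_{k+1}$ and expanding, we expect
\[
w^{k+1}=(I-\alpha_kA)w^k+\alpha_k r_k(w^k),
\]
where $r_k$ collects the terms $S_{k+1}J_kx^k$, $S_{k+1}Ax^k$, $A S_k x^k$, $\alpha_k(\cdot)$-corrections, $Ac_k$, and the image of $\rho_k$. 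The linear parts of $r_k$ have coefficients tending to zero, because $S_k\to0$ and $S_{k+1}J_k\to0$. The inhomogeneous part, of size $O(\|c_k\|)$, does not vanish identically, and this is where the fixed-point geometry is lost. Consequently the Lipschitz constant of $r_k$ on the relevant ball is at most $\varepsilon(\delta_0)+o(1)$, and we enlarge $m_0$ to make the $o(1)$ part small.

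The second step is the Lyapunov--Perron construction. Split $\Rd=E^u\oplus E^{cs}$ into the spectral subspaces of $-A$ with real part $>\beta>0$ and real part $\le\beta/2$ respectively; $E^u$ is nontrivial because $z^*$ is an unstable zero. With the propagators $\Phi(k,m)=\prod_{i=m}^{k-1}(I-\alpha_iA)$, the unstable part grows like $e^{\beta\sum\alpha_i}$ and the center-stable part grows at most like $e^{(\beta/2)\sum\alpha_i}$ in adapted norms. For $m\ge m_0$, any transformed orbit $\{w^k\}_{k\ge m}$ that stays bounded, which holds because $x^k$ stays in $B_{\delta_0}(0)$, must satisfy the Perron equation
\[
w^k=\Phi(k,m)P^{cs}w^m+\sum_{i=m}^{k-1}\alpha_i\Phi(k,i+1)P^{cs}r_i(w^i)-\sum_{i\ge k}\alpha_i\Phi(i+1,k)^{-1}P^{u}r_i(w^i).
\]
We take this equation in a weighted sup-norm space with weight $e^{-\gamma\sum_{i=m}^{k-1}\alpha_i}$, $\gamma\in(\beta/2,\beta)$. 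Because the inhomogeneous terms are bounded, the boundary term (the $P^u$ sum) converges, and the right-hand side is a contraction for a given $P^{cs}w^m$. A standard contraction estimate then shows that $P^uw^m=h_m(P^{cs}w^m)$ for a Lipschitz map $h_m$. Thus trapped $w^m$ lie on a Lipschitz graph over $E^{cs}$, which has codimension $\dim E^u\ge1$ and is therefore Lebesgue null. Here $r_i$ is really defined only on the ball, so we first extend it globally by a Lipschitz cutoff, applied before the contraction argument.

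Finally, we pull back: $z^m=z^*+(I+S_m)^{-1}(w^m-c_m)$ is an affine bijection, so
\[
\mathcal N_m:=z^*+(I+S_m)^{-1}(\mathrm{graph}(h_m)-c_m)
\]
is null. The main obstacle is the algebra of the second step, together with making the contraction constants uniform in $m\ge m_0$ despite nonconstant $\alpha_k$ and inhomogeneous $c_k$-terms. This requires checking that every non-decaying piece of $r_k$ is either summable against $\alpha_k$ or absorbed by the change of variables, which is exactly what the four conditions in \cref{assumption:stochastic-oracle} should provide.
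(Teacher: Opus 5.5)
Your architecture matches the paper's: fix the seed, conjugate by a tail-sum matrix so the linear part becomes $I-\alpha_k{\rm D}F(z^*)$, cut off the residual, take the Lyapunov--Perron fixed point in an exponentially weighted space to get a Lipschitz graph over $E_{\cs}$, and pull back by an affine bijection. However, the change of variables has the wrong sign and does not do what you claim. This is the step carrying the new idea, and it is exactly the algebra you deferred with ``we expect''. Using $S_{k+1}=S_k-\alpha_kJ_k$,
\[
(I+S_{k+1})(I-\alpha_kA-\alpha_kJ_k)=(I-\alpha_kA)(I+S_k)-2\alpha_kJ_k+\alpha_k\bigl(AS_k-S_{k+1}A-S_{k+1}J_k\bigr).
\]
So conjugating by $I+S_k$ doubles the random Jacobian perturbation instead of removing it. Your $r_k$ then contains the linear term $-2J_k(I+S_k)^{-1}w$, whose Lipschitz constant $2\|J_k\|$ is not small. \cref{assumption:stochastic-oracle} controls only $S_k$ and $S_{k+1}J_k$, not $J_k$ itself; under i.i.d.\ sampling with $J_k\not\equiv0$, one has $\|J_k\|\not\to0$ almost surely. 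Hence the Perron operator is not a contraction. The shift has the same slip: with your $c_k$, $c_{k+1}-c_k=-\alpha_k(I+S_{k+1})b_k$. The forcing therefore becomes $\alpha_kAc_k-2\alpha_k(I+S_{k+1})b_k$ rather than $\alpha_kAc_k$. The inhomogeneous part of $r_k$ is then not $O(\|c_k\|)$ and need not even be bounded, since $b_k$ need not be.

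The repair is to reverse both signs: $w^k:=(I-S_k)x^k+c_k$ with $c_k:=-\sum_{i\ge k}\alpha_i(I-S_{i+1})b_i$. Then, with $x^k=(I-S_k)^{-1}(w^k-c_k)$,
\[
w^{k+1}=(I-\alpha_kA)w^k+\alpha_k\bigl[Ac_k+B_kx^k-(I-S_{k+1})\rho_k(x^k)\bigr],\qquad B_k:=S_{k+1}A+S_{k+1}J_k-AS_k.
\]
Now the remaining linear coefficient $B_k$ is small precisely because $S_k\to0$ and $S_{k+1}J_k\to0$, and $c_k$ exists by the two series conditions.

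With this correction your proof is the paper's ($U_k=I-S_k$, the same $B_k$) except for the translation $c_k$. The paper does not translate. It keeps the forcing $\widetilde b^k=(I-S_{k+1})b_k$ inside the Lyapunov--Perron operator and shows the affine terms lie in $\cY_\theta$ by summation by parts against the bounded tails $\sum_{i\ge k}\alpha_i\widetilde b^i$. Your shift performs that Abel summation once, in the coordinates, leaving only the bounded, vanishing forcing $\alpha_kAc_k$; this makes your ``bounded inhomogeneity'' claim true. The price is that $r_k(0)\neq0$, so the Lipschitz constant of the cut-off residual picks up an extra term of order $\delta_0^{-1}\sup_{k\ge m}\|c_k\|$. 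This is harmless because $m_0$ is chosen after $\delta_0$ and may depend on the seed; alternatively, keep $r_k(0)$ outside the cutoff.
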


\cref{thm:local-stable-null} shows that, if a trajectory remains in a
neighborhood of an unstable zero from some time $m\in\N$ onward, then its state
at $m$ must lie in a Lebesgue null set. This local null set result forms the backbone of the almost sure avoidance theorem.

\begin{theorem}[Almost sure avoidance of unstable zeros]
\label{thm:avoid-strict-saddles}
Under the
probabilistic setting of \cref{def:probability-model}, suppose \cref{assumption:F,assumption:stochastic-oracle} are satisfied. Then the stochastic recursion \eqref{eq:stochastic-fixed-point-method} almost surely avoids unstable zeros, i.e.,
\[ \Prob\;\!({\lim}_{k\to\infty}\,z^k\in\cZ^*)=0. \]
\end{theorem}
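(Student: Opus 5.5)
We derive \cref{thm:avoid-strict-saddles} from \cref{thm:local-stable-null} using three ingredients: a countable cover of $\cZ^*$, the lipeomorphism property to pull null sets back to time zero, and Fubini on the product space of \cref{def:probability-model}.

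\textbf{Step 1: a countable cover.} For each $z^*\in\cZ^*$, \cref{thm:local-stable-null} provides a radius $\delta_0(z^*)>0$. The balls $B_{\delta_0(z^*)/2}(z^*)$ cover $\cZ^*$, and $\Rd$ is second countable, so Lindelöf gives countably many points $\{z^*_j\}_{j\in\N}\subset\cZ^*$ with radii $\delta_j:=\delta_0(z^*_j)$ such that $\cZ^*\subset\bigcup_j B_{\delta_j/2}(z^*_j)$.

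Suppose $\lim_k z^k=\hat z\in\cZ^*$. Pick $j$ with $\hat z\in B_{\delta_j/2}(z^*_j)$. Then $\|z^k-z^*_j\|\le\delta_j$ for all $k\ge m$ and some $m$, and enlarging $m$ we may assume $m\ge m_0(j,\omega_\xi)$.

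\textbf{Step 2: pathwise null sets at time zero.} Set $\cE:=\bigcap_j\cE_{\rm reg}(z^*_j)$. As a countable intersection of full-measure events, $\Prob_\xi(\cE)=1$.

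Fix $\omega_\xi\in\cE$ and write $\Phi_m:=T_{\alpha_{m-1},\xi^{m-1}}\circ\cdots\circ T_{\alpha_0,\xi^0}$, so that $z^m=\Phi_m(z^0)$. \cref{thm:local-stable-null} gives null sets $\mathcal N^j_m$ such that trapping near $z^*_j$ from time $m$ forces $z^m\in\mathcal N^j_m$. Hence
\[
\{z^0:\lim_k z^k\in\cZ^*\}\subset\bigcup_j\bigcup_{m\ge m_0(j,\omega_\xi)}\Phi_m^{-1}(\mathcal N^j_m).
\]
By the lipeomorphism part of \cref{assumption:F}, each $T_{\alpha_k,\xi^k}^{-1}$ exists and is locally Lipschitz, since $\alpha_k\le\tilde\alpha$. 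So $\Phi_m^{-1}$ is a composition of finitely many locally Lipschitz maps on $\Rd$. A locally Lipschitz map on $\Rd$ sends null sets to null sets: cover $\Rd$ by countably many compacts on which the map is Lipschitz, and use that Lipschitz images of null sets are null. Therefore $\Phi_m^{-1}(\mathcal N^j_m)=\Phi_m^{-1}(\mathcal N^j_m)$ viewed as an image under $\Phi_m^{-1}$ is null, and the countable union above is a null set $\mathcal M(\omega_\xi)$.

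\textbf{Step 3: Fubini.} Since $\D\ll\mu$, we get $\Prob_0(z^0\in\mathcal M(\omega_\xi))=0$ for every $\omega_\xi\in\cE$. Let $A:=\{\lim_k z^k\in\cZ^*\}$. Fubini on $\Prob_0\otimes\Prob_\xi$ then gives
\[
\Prob(A)=\int_{\cE}\Prob_0(A_{\omega_\xi})\,d\Prob_\xi=0,
\]
where $A_{\omega_\xi}$ denotes the section of $A$ at $\omega_\xi$.

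\textbf{Main obstacle: measurability of $A$.} For Fubini we need $A$ to be measurable, or at least contained in a measurable null set.

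The map $(z^0,\omega_\xi)\mapsto z^k$ is measurable by the joint measurability of $G$, so the event $\{\lim_k z^k \text{ exists}\}$ is measurable. The remaining issue is whether the limit lies in $\cZ^*$, and $\cZ^*$ need not be Borel a priori.

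To handle this, I would replace $A$ by the measurable superset
\[
A':=\bigcup_j\bigcup_m\Bigl\{\|z^k-z^*_j\|\le\delta_j\ \ \forall\,k\ge m\Bigr\}.
\]
Each inner set is a countable intersection of measurable sets, so $A'$ is measurable. Step 1 shows $A\subset A'$.

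It remains to show $A'$ is null. Within $A'$, the sets indexed by $m<m_0(j,\omega_\xi)$ are contained in the sets indexed by $m_0$, since trapping from time $m$ implies trapping from any later time. So for $\omega_\xi\in\cE$ the section $A'_{\omega_\xi}$ lies in $\mathcal M(\omega_\xi)$. Fubini then gives $\Prob(A')=0$, and hence $\Prob(A)=0$.
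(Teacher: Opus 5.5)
Your proposal is correct and follows essentially the same route as the paper: a Lindel\"of cover of $\cZ^*$ by half-radius balls, the full-measure event $\bigcap_j\cE_{\rm reg}(z_j^*)$, pulling the null sets $\mathcal N_m^j$ back to time zero through the locally Lipschitz inverses $\Phi_m^{-1}$, and Fubini on the product space. Your measurable superset $A'$ handles the measurability step more carefully than the paper, which simply asserts it. The workaround is valid but not needed: $F$ is $\cC^1$ on an open neighborhood of $\cZ^*$, so $\cZ^*$ is locally closed and therefore Borel.
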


\begin{proof}
If $\cZ^*=\emptyset$, the claim is immediate. Otherwise, for each $z\in\cZ^*$, let $\delta_z>0$ be the radius from \cref{thm:local-stable-null}. The balls $B_{\delta_z/2}(z)$, $z\in\cZ^*$, cover $\cZ^*$. Since $\Rd$ is second countable, choose a countable set of points $\{z_i\}_i$ with $z_i\in\cZ^*$, $i\ge1$, such that
\[
\cZ^*\subset {\bigcup}_{i=1}^{\infty}B_{\delta_i/2}(z_i),
\qquad \text{and} \qquad
\delta_i:=\delta_{z_i}.
\]
Set $\cE_*:={\bigcap}_{i=1}^{\infty}\cE_{\rm reg}(z_i)$. By countability of $\{z_i\}_i$ and \cref{assumption:stochastic-oracle}, $\Prob_\xi(\cE_*)=1$. Next, fix $\omega_\xi\in\cE_*$, and set $T_{\omega_\xi,k}(z):=z-\alpha_kG(z;\xi^k(\omega_\xi))$. For $m\ge1$, define
\[
\Phi_{\omega_\xi,m}:=
T_{\omega_\xi,m-1}\circ\cdots\circ T_{\omega_\xi,0},
\qquad \text{and} \qquad
\Phi_{\omega_\xi,0}:=I.
\]
By the lipeomorphism condition (\cref{assumption:F}) and $\sup_k\alpha_k\le\tilde{\alpha}$, each $T_{\omega_\xi,k}$ is invertible and $T_{\omega_\xi,k}^{-1}$ is locally Lipschitz. Hence, for every finite $m$, the map $\Phi_{\omega_\xi,m}$ has the locally Lipschitz inverse
\[
\Phi_{\omega_\xi,m}^{-1}
=
T_{\omega_\xi,0}^{-1}\circ\cdots\circ T_{\omega_\xi,m-1}^{-1}.
\]
For $z\in\Rd$, write $Z^k(z,\omega_\xi):=\Phi_{\omega_\xi,k}(z)$ for the trajectory generated by 
\[ Z^{k+1}=T_{\omega_\xi,k}(Z^k)\qquad \text{with} \qquad Z^0=z.\]
For the fixed seed $\omega_\xi$, define the set of initial points whose
trajectories converge to an unstable zero:
\[
\mathcal A(\omega_\xi):= \{z\in\Rd:{\lim}_{k\to\infty}~Z^k(z,\omega_\xi)\in\cZ^*\}.
\]
We show that $\mu(\mathcal A(\omega_\xi))=0$.

For each $i$, \cref{thm:local-stable-null} applied at $z_i$ and
$\omega_\xi$ gives an index $m_i$ and null sets $\mathcal N_{i,m}$,
$m\ge m_i$. If $z\in\mathcal A(\omega_\xi)$, then
$\bar z:=\lim_k Z^k(z,\omega_\xi)$ lies in $B_{\delta_i/2}(z_i)$ for
some $i$. Hence $Z^k(z,\omega_\xi)\in B_{\delta_i}(z_i)$ for all large
$k$. Choose $m\ge m_i$ so that this tail condition holds from time $m$
onward. Then \cref{thm:local-stable-null} gives
$Z^m(z,\omega_\xi)\in\mathcal N_{i,m}$. Since
$Z^m(z,\omega_\xi)=\Phi_{\omega_\xi,m}(z)$,
\[
\mathcal A(\omega_\xi)
\subset
{\bigcup}_{i=1}^{\infty}{\bigcup}_{m=m_i}^{\infty}
\Phi_{\omega_\xi,m}^{-1}(\mathcal N_{i,m}).
\]
For each $i,m$, the set $\mathcal N_{i,m}$ is null. Since
$\Phi_{\omega_\xi,m}^{-1}$ is locally Lipschitz, the set
$\Phi_{\omega_\xi,m}^{-1}(\mathcal N_{i,m})$ is null; see, e.g., 
\cite[Proposition~6.5]{Lee2012IntroductionSmoothManifolds}.
The union is countable, so $\mu(\mathcal A(\omega_\xi))=0$. Since $\D\ll\mu$, we obtain
\[
\D(\mathcal A(\omega_\xi))=0
\qquad\text{for every }\omega_\xi\in\cE_*.
\]
Let $\cE := \left\{
(\omega_0,\omega_\xi)\in\Omega_0\times\Omega_\xi:
\lim_{k\to\infty}
Z^k\bigl(z^0(\omega_0),\omega_\xi\bigr)
\in\cZ^*
\right\}$. By the joint measurability of the trajectory maps, $\cE$ is measurable. Since $z^0$ is independent of the seed sequence and has law
$\D$, Fubini's theorem yields
\[
\Prob(\cE)=
\int_{\Omega_\xi}
\Prob_0\bigl(z^0\in\mathcal A(\omega_\xi)\bigr)
\,{\rm d}\Prob_\xi(\omega_\xi) = \int_{\Omega_\xi}
\D\bigl(\mathcal A(\omega_\xi)\bigr)
\,{\rm d}\Prob_\xi(\omega_\xi)
=0,
\]
because the integrand is zero on $\cE_*$ and $\Prob_\xi(\Omega_\xi\setminus\cE_*)=0$.
\end{proof}

\section{\texorpdfstring{Proof of \cref{thm:local-stable-null}}{Proof of the local stable null theorem}}\label{sec:local-trapped-proof}

We first highlight the difficulty. The proof is pathwise: we fix an unstable
zero $z^*$ and a seed path $\{\xi^k\}_k$. The goal is to show that any
trajectory that stays near $z^*$ must eventually pass through a Lebesgue null set.
Once the seed path is fixed, the stochastic recursion is deterministic but
driven by a time-dependent sample field:
\[
z^{k+1}=z^k-\alpha_kG(z^k;\xi^k).
\]
Time dependence alone would not be the main issue if the sample maps shared a
\emph{common fixed point} and a \emph{common linearization}. The main difficulty is
that, in the stochastic setting, these two common structures are generally
lost.

To see this, consider the deterministic iteration
$z^{k+1}=z^k-\alpha_k F(z^k)$; see, e.g.,
\cite{LeePPSJR2019,panageas2019first,mucsat2026non}. If $F(z^*)=0$, then
$z^*$ is fixed by every update map, and for
$\bar z^k:=z^k-z^*$, the local form is
\[
\bar z^{k+1}
=[I-\alpha_k{\rm D}F(z^*)]\bar z^k
+o(\|\bar z^k\|), 
\]
where every leading linear factor is generated by the same matrix
${\rm D}F(z^*)$. Thus, the local dynamics share a common
center-stable and unstable splitting $\Rd=E_{\cs}\oplus E_u$.

By contrast, for stochastic recursions, the corresponding centered expansion is
\[
\bar z^{k+1}
=
[I-\alpha_k{\rm D}G(z^*;\xi^k)]\bar z^k
-\alpha_kG(z^*;\xi^k)
+o(\|\bar z^k\|).
\]
Here, $G(z^*;\xi^k)$ generally does not vanish, and
${\rm D}G(z^*;\xi^k)$ varies in each iteration. In other words, the sample
maps \emph{do not} necessarily share $z^*$ as a fixed point, and their leading linear parts are also \emph{sample-dependent}. These are the main obstacles in proving saddle avoidance for stochastic methods.

We resolve these obstacles in four steps to obtain the null set conclusion stated in \Cref{thm:local-stable-null}.

\begin{enumerate}[label=\emph{Step \arabic*.}, leftmargin=*]
\item \emph{(Pathwise analysis and change of variables).}
In \cref{subsec:shifted-dynamics}, we translate $z^*$ to the origin and write
the fixed seed recursion in terms of the sample fields. We then construct
path-dependent invertible matrices $U_k$ and set $y^k=U_k z^k$. The purpose of
this transformation is to remove the sample-dependent Jacobian perturbation
from the leading linear term, so the transformed sequence has the form
\[
y^{k+1}
= [I-\alpha_k{\rm D}F(z^*)]y^k
+ \text{transformed forcing terms} + \text{residual terms}.
\]

\item \emph{(Linear transition estimates).}
The leading dynamics in the transformed $y$-coordinates are generated by the
common matrix ${\rm D}F(z^*)$. The transformed forcing and residual terms
remain, but they no longer determine the splitting. Therefore, we can use a fixed
splitting $\Rd=E_{\cs}\oplus E_u$ for the linear transitions
$[I-\alpha_k{\rm D}F(z^*)]$. We derive the transition estimates for forward growth on $E_{\cs}$ and backward decay on $E_u$ in \cref{subsec:spectral-transition-bounds}. 

\item \emph{(Lyapunov--Perron graph).} To further characterize the local
behavior of $\{y^k\}_k$, we construct the Lyapunov--Perron map in
\cref{subsec:truncated-lp}. We then show that the map is contractive, and that its fixed points form a Lipschitz graph over $E_{\cs}$. This graph contains all initial $y$-points whose asymptotic trajectories remain in a certain bounded region.

\item \emph{(Null-set conclusion).}
Finally, we transfer these results back to the original $z$-trajectory
in \cref{subsec:local-null-set-argument}. If a $z$-trajectory stays near $z^*$ from
some time onward, then the corresponding
$y$-trajectory must start on the Lyapunov--Perron graph, which is Lebesgue null. Since the change from $z$ to $y$ is invertible, the corresponding $z$-points also lie in a null set.
\end{enumerate}

This argument differs from stochastic non-convergence or saddle avoidance results in
which the key mechanism is noise excitation or nondegeneracy in unstable
directions; see, e.g.,
\cite{pemantle1990nonconvergence,BianchiHachemSchechtman2023,DavisDrusvyatskiyJiang2025}.
 
\subsection{Shifted dynamical system}\label{subsec:shifted-dynamics}
Fix an unstable zero $z^*\in\cZ^*$. Let $\cE_{\rm reg}^*:=\cE_{\rm reg}(z^*)$, where $\cE_{\rm reg}(z^*)$ is given by \cref{assumption:stochastic-oracle}, and fix a seed realization $\omega_\xi\in\cE_{\rm reg}^*$. Without loss of generality, we may assume that the coordinates are translated so that $z^*=0$ (in particular, we will keep the same symbols $F$ and $G$). Then $F(0)=0$ and defining $H:={\rm D}F(0)$, it holds that
\[  g_k(z):=G\bigl(z;\xi^k(\omega_\xi)\bigr),\qquad J_k:={\rm D}g_k(0) - H \qquad \text{and} \qquad  b_k:=g_k(0).
\]
The sequence $\{b_k\}_k$ acts as a perturbation of the recursion and the stochastic recursion takes the form:
\begin{equation}\label{eq:z_recursion}
    z^{k+1}=z^k-\alpha_k g_k(z^k), \qquad \forall\, k\in\N.    
\end{equation}

\paragraph{Taylor expansion.}
Define the Taylor expansion remainder of $g_k:\Rd\to\Rd$ as
\[
    R_k(z):=g_k(z)-g_k(0)-{\rm D}g_k(0)z.
\]
Then, we have $g_k(z)=g_k(0)+(H+J_k)z+R_k(z)$. 
Substituting this into \eqref{eq:z_recursion}, it follows
\begin{equation}\label{eq:z_recursion_new}
    z^{k+1}
    =
    (I-\alpha_k H-\alpha_k J_k)z^k
    -\alpha_k b_k
    -\alpha_k R_k(z^k).
\end{equation}
The next lemma gives a uniform local Lipschitz bound for the Taylor remainders
$R_k$, with a modulus that vanishes as the neighborhood shrinks. Its proof is
deferred to \cref{subsec:proof-Lip-of-R-k}.
\begin{lemma}\label{lem:Lip of R_k}
There exist $\bar\rho>0$ and a modulus $\Upsilon:\mathbb{R}_+\to\mathbb{R}_+$ (both independent of the seed realization) satisfying $\Upsilon(\rho)\to 0$ as $\rho\to 0$, such that
\[
\|{\rm D}R_k(z)\|\le \Upsilon(\rho) \qquad \text{and} \qquad \|R_k(z)-R_k(w)\|\le \Upsilon(\rho)\|z-w\|
\]
for all $z,w\in B_\rho(0)$, all $\rho\in(0,2\bar\rho]$, and all $k\in\N$.
\end{lemma}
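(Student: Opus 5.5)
The plan is to derive both bounds from the local equicontinuity part of \cref{assumption:F}, applied at the (translated) unstable zero $z^*=0$. By that assumption there is an open neighborhood $\cU_{0}$ of $0$ on which every $g_k=G(\cdot;\xi^k(\omega_\xi))$ is differentiable. The family $\{{\rm D}G(\cdot;\xi)\}_{\xi\in\Xi}$ is equicontinuous on compact subsets of $\cU_0$. First choose $\bar\rho>0$ so that the closed ball $\mathcal Q:=\overline{B_{2\bar\rho}(0)}$ is contained in $\cU_0$. Since $\mathcal Q$ is compact, equicontinuity holds uniformly on it, over all $\xi\in\Xi$. This choice depends only on $G$ and $z^*$, and not on the seed path. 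That is what makes $\bar\rho$ and the modulus independent of the realization.

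Next define the modulus directly as
\[
\Upsilon(\rho):=\sup_{\xi\in\Xi}\ \sup_{\|z\|\le\rho}\ \|{\rm D}G(z;\xi)-{\rm D}G(0;\xi)\|,\qquad \rho\in(0,2\bar\rho],
\]
and extend it by $\Upsilon(\rho):=\Upsilon(2\bar\rho)$ for larger $\rho$, with $\Upsilon(0):=0$. Equicontinuity on $\mathcal Q$ gives two things. For every $\varepsilon>0$ there is $\delta>0$ such that $\|{\rm D}G(z;\xi)-{\rm D}G(0;\xi)\|\le\varepsilon$ for all $\xi$ and all $\|z\|\le\delta$. Hence $\Upsilon(\rho)\le\varepsilon$ for $\rho\le\delta$, so $\Upsilon(\rho)\to0$ as $\rho\to0$. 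The same argument, chaining finitely many $\delta$-steps along the segment from $0$ to $z$, shows that $\Upsilon(\rho)$ is finite on $(0,2\bar\rho]$. It is also nondecreasing by construction.

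The derivative bound is then immediate. From $R_k(z)=g_k(z)-g_k(0)-{\rm D}g_k(0)z$ we get ${\rm D}R_k(z)={\rm D}g_k(z)-{\rm D}g_k(0)$ on $B_\rho(0)\subset\cU_0$. Therefore $\|{\rm D}R_k(z)\|\le\Upsilon(\rho)$ uniformly in $k$, since $g_k=G(\cdot;\xi^k(\omega_\xi))$ is one member of the family.

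For the Lipschitz bound, use convexity of the ball $B_\rho(0)$. For $z,w\in B_\rho(0)$ the segment between them stays in the ball, and we write
\[
R_k(z)-R_k(w)=\int_0^1{\rm D}R_k\big(w+t(z-w)\big)(z-w)\,{\rm d}t .
\]
This integral representation needs $t\mapsto {\rm D}R_k(w+t(z-w))$ to be integrable. That holds because ${\rm D}g_k$ is continuous on $\mathcal Q$, which follows from equicontinuity. If one prefers to avoid integrals, the mean value inequality for differentiable maps gives the same bound. Either way we obtain $\|R_k(z)-R_k(w)\|\le\Upsilon(\rho)\|z-w\|$.

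The only step that needs any care is the uniformity in $\xi$. Equicontinuity is stated only on compact subsets, which is why we work on the closed ball $\mathcal Q$ rather than on the open ball. This step is routine, and we do not expect a genuine obstacle.
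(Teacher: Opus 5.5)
Your proposal is correct and follows the paper's proof essentially step for step. It uses the same choice of $\bar\rho$ with $\overline{B}_{2\bar\rho}(0)\subset\cU_{z^*}-z^*$, the same definition of $\Upsilon$ as a supremum over $\xi\in\Xi$ of $\|{\rm D}G(u;\xi)-{\rm D}G(0;\xi)\|$ (extended constantly), and the same integral representation of $R_k(z)-R_k(w)$ for the Lipschitz bound; your chaining argument for finiteness of $\Upsilon$ on $(0,2\bar\rho]$ just makes explicit a point the paper asserts directly from equicontinuity.
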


\paragraph{Change of variables.}
The linear part in \eqref{eq:z_recursion_new} contains the sample-dependent
Jacobian perturbation $J_k$. We use a change of variables to
absorb the tail contribution of these perturbations and recover a leading
linear term governed by the fixed matrix $H$.
Define the tail aggregate of the linear perturbations via
\[
S_k:={\sum}_{i=k}^\infty \alpha_i J_i.
\]
Fix the parameter $\varepsilon>0$ that shall be specified in \eqref{eq:choose-delta-q}. By \cref{assumption:stochastic-oracle}, it holds that $S_k\to0$ and $S_{k+1}J_k\to0$. Hence, the corresponding tail suprema converge to zero, and we may choose a path-dependent index $m=m(\omega_\xi,\varepsilon)$ such that, for every $k\ge m$,
\begin{equation}\label{eq:tail-smallness}
\|S_k\|\le\tfrac12
\qquad \text{and} \qquad
{\sup}_{\ell\ge k}~(\|S_{\ell+1}\|\|H\|+\|H\|\|S_\ell\|+\|S_{\ell+1}J_\ell\|)\le\varepsilon.
\end{equation}
Next, we introduce the path-dependent matrices
\[
    U_k:=I-S_k \qquad \text{and} \qquad y^k:=U_k\, z^k\qquad \forall\,k\geq m.
\]
Then, each $U_k$ is invertible and admits the bounds
\begin{equation}
    \label{eq:U_k bounds}
    \|U_k\|\le \tfrac32 \qquad \text{and} \qquad \|U_k^{-1}\|\le 2, \qquad \forall\, k\ge m.
\end{equation}
Based on $S_k=\alpha_k J_k+S_{k+1}$, a direct expansion gives
\begin{equation}
    \label{eq:def B_k}
    U_{k+1}(I-\alpha_k H-\alpha_k J_k)
    =
    (I-\alpha_k H)U_k+\alpha_k B_k\qquad \text{where}\qquad B_k:=S_{k+1}H+S_{k+1}J_k-HS_{k}.
\end{equation}
Multiplying the recursion \eqref{eq:z_recursion_new} by $U_{k+1}$ and using the above identity, we obtain the shifted dynamics
\begin{equation}\label{eq:y_recursion}
    y^{k+1}
    =
    (I-\alpha_k H)y^k
    -\alpha_k \underbracket[.8pt][3pt]{ U_{k+1}b_k}_{=:\widetilde b^k} 
    -\alpha_k \underbracket[.8pt][3pt]{ \left(U_{k+1}R_k(U_k^{-1}y^k)-B_kU_k^{-1}y^k\right)}_{=:\widetilde{R}_k(y^k)}.
\end{equation}
Importantly, the change of variables from $\{z^k\}_k$ to $\{y^k\}_k$ removes the
$k$-dependent perturbation $J_k$ from the leading linear component: the term
$I-\alpha_k H-\alpha_k J_k$ is reduced to $I-\alpha_k H$, whose dependence on
$k$ enters only through the scalar step-size $\alpha_k$. This is the key point of
the reduction. It gives a common center, stable, and unstable splitting for the
matrix sequence $\{I-\alpha_k H\}_k$, whereas no such fixed splitting is
available in general for $\{I-\alpha_k H-\alpha_k J_k\}_k$.

\subsection{Subspace splitting and properties of transition matrices}\label{subsec:spectral-transition-bounds}

Since $\alpha_k\to0$, we may increase the index $m$, if necessary, so that $\alpha_k\|H\|\le\frac12$ for every $k\ge m$. After discarding the iterates before $m$ and reindexing all sequences and maps, we retain the same notation. Then \eqref{eq:tail-smallness} holds for every $k\in\N$. The recursion \eqref{eq:y_recursion} can then be restated as follows
\begin{equation}
\label{eq:linear-dynamics}
y^{k+1}
=
M_k y^k-\alpha_k \widetilde{R}_k(y^k)-\alpha_k \widetilde{b}^k,
\qquad\text{where}\qquad
M_k:=I-\alpha_k H\qquad \forall\, k\in\N.
\end{equation}
In particular, using $\alpha_k\|H\| \le \frac12$, each matrix $M_k$ is invertible. Let us define the cumulative time \[t_{k,k}:=0 \qquad\text{and}\qquad t_{k,j}:={\sum}_{i=j}^{k-1}\,\alpha_i \quad \text{for $k>j$}.\]
Since each $M_k$ is an affine function of the fixed real matrix $H$, every generalized $H$-invariant subspace is also invariant under $M_k$. Let $E_{\cs}$ and $E_u$ denote the real spectral subspaces of $H$ associated with $\{\lambda \in \operatorname{spec}(H): \Re(\lambda)\ge0\}$ and $\{\lambda \in \operatorname{spec}(H): \Re(\lambda)<0\}$, respectively. Therefore,
\[
\mathbb{R}^d = E_{\cs}\oplus E_u,
\]
and both $E_{\cs}$ and $E_u$ are invariant under $M_k$ for every $k\in\N$. Note that, in general, this splitting is not orthogonal because $H$ is not necessarily symmetric. We also refer to \Cref{subsec:proof-growth-rates-nonsym} and \cite[Chapter 3]{HornJohnson2012} for further details. Let $P_{\cs}$ and $P_u$ denote the projections onto $E_{\cs}$ and $E_u$, respectively. Since the matrices $\{M_k\}_k$ are invertible, define the forward and backward transition operators
\begin{equation}\label{eq:transition-operators}
\Phi^+(k,j):=
\begin{cases}
M_{k-1}\cdots M_j, & k>j,\\[0.5ex]
I, & k=j,
\end{cases}
\qquad
\Phi^-(k,j):=\left[\Phi^+(k,j)\right]^{-1},
\qquad \forall\, k\ge j.
\end{equation}
Because both $E_{\cs}$ and $E_u$ are invariant under $M_k$ for every $k\in\N$, they are also invariant under $\Phi^{\pm}(k,j)$. Define the restrictions on these subspaces by
\begin{equation}\label{eq:restricted-transition-operators}
\Phi_{\cs}^{\pm}(k,j)=\Phi^{\pm}(k,j)P_{\cs}=P_{\cs}\Phi^{\pm}(k,j)
\qquad \text{and} \qquad 
\Phi_u^{\pm}(k,j)=\Phi^{\pm}(k,j)P_u=P_u\Phi^{\pm}(k,j).
\end{equation}
Below, we present several properties of the residual term $\widetilde{R}_k$ and the transition matrices. The proofs are given in \cref{subsec:proof-psi-properties,subsec:proof-growth-rates-nonsym}, respectively.

\begin{lemma}[Local properties of the residual term]
\label{lem:psi_properties}
Let $\widetilde{R}_k:\Rd\to\Rd$ be defined as in \eqref{eq:y_recursion}, i.e.,
\[
\widetilde{R}_k(y)=U_{k+1}R_k(U_k^{-1}y)-B_kU_k^{-1}y.
\]
Let $\bar\rho>0$ and $\Upsilon:\R_+\to\R_+$ be given in \cref{lem:Lip of R_k}. Then, for every $k\in\N$, every $\rho\in(0,\bar\rho]$, and all $w,y\in B_\rho(0)$, it holds that
\[
\|\widetilde{R}_k(y) - \widetilde{R}_k(w)\| \leq \left[3\Upsilon(2\rho)+2(\|S_{k+1}\|\|H\| + \|H\| \|S_k\| + \|S_{k+1}J_k\|)\right]\cdot \|y-w\|.
\]
\end{lemma}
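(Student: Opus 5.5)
The estimate follows from the triangle inequality once $\widetilde R_k(y)-\widetilde R_k(w)$ is split into its nonlinear and linear parts:
\[
\widetilde R_k(y)-\widetilde R_k(w)
= U_{k+1}\bigl[R_k(U_k^{-1}y)-R_k(U_k^{-1}w)\bigr] - B_kU_k^{-1}(y-w).
\]
After the reindexing in \cref{subsec:spectral-transition-bounds}, the bounds \eqref{eq:tail-smallness} and \eqref{eq:U_k bounds} hold for every $k\in\N$. In particular $\|U_{k+1}\|\le\tfrac32$ and $\|U_k^{-1}\|\le 2$, and we use only these facts together with \cref{lem:Lip of R_k}.

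\emph{Nonlinear part.} For $y,w\in B_\rho(0)$ with $\rho\le\bar\rho$, we have $\|U_k^{-1}y\|\le 2\|y\|<2\rho$, so both $U_k^{-1}y$ and $U_k^{-1}w$ lie in $B_{2\rho}(0)$. Since $2\rho\in(0,2\bar\rho]$, this is exactly the range where \cref{lem:Lip of R_k} applies, giving
\[
\|R_k(U_k^{-1}y)-R_k(U_k^{-1}w)\|\le \Upsilon(2\rho)\|U_k^{-1}(y-w)\|\le 2\Upsilon(2\rho)\|y-w\|.
\]
Multiplying by $\|U_{k+1}\|\le\tfrac32$ bounds this part by $3\Upsilon(2\rho)\|y-w\|$.

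\emph{Linear part.} From the definition $B_k=S_{k+1}H+S_{k+1}J_k-HS_k$ in \eqref{eq:def B_k}, submultiplicativity gives
\[
\|B_k\|\le\|S_{k+1}\|\|H\|+\|S_{k+1}J_k\|+\|H\|\|S_k\|.
\]
Here $S_{k+1}J_k$ is kept as a single product and not split into $\|S_{k+1}\|\|J_k\|$, because the assumptions control $S_{k+1}J_k$ itself and give no bound on $J_k$. Combining with $\|U_k^{-1}\|\le2$ bounds this part by $2(\|S_{k+1}\|\|H\|+\|H\|\|S_k\|+\|S_{k+1}J_k\|)\|y-w\|$.

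Adding the two bounds gives exactly the stated constant.

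The only point needing care is the domain bookkeeping in the nonlinear part. The radius doubles under $U_k^{-1}$, which is why $\rho$ is restricted to $(0,\bar\rho]$ while \cref{lem:Lip of R_k} is stated on $(0,2\bar\rho]$. We also need $U_k^{-1}y$ to remain in the neighborhood on which the $R_k$ are differentiable, and this is built into the choice of $\bar\rho$ in that lemma.
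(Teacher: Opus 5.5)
Your proof is correct and follows the paper's argument essentially line for line. Both use the triangle-inequality split into the $U_{k+1}[R_k(U_k^{-1}y)-R_k(U_k^{-1}w)]$ and $B_kU_k^{-1}(y-w)$ parts, the bounds $\|U_{k+1}\|\le\tfrac32$ and $\|U_k^{-1}\|\le 2$, the observation that $U_k^{-1}$ maps $B_\rho(0)$ into $B_{2\rho}(0)$ so that \cref{lem:Lip of R_k} applies with $\Upsilon(2\rho)$, and the submultiplicative bound on $\|B_k\|$ that keeps $S_{k+1}J_k$ as a single product.
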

\begin{lemma}[Transition bounds on spectral subspaces]
\label{lem:growth_rates_nonsym}
Suppose that the step-sizes satisfy $\alpha_\ell\to0$ and
$\alpha_\ell\|H\|\le1/2$ for all $\ell$. Then there exist constants
$0<\kappa<\nu$ and $C_H\ge1$ such that
\[
\|\Phi_u^{-}(k,j)\|\le C_H\exp(-\nu t_{k,j})\qquad \text{and} \qquad 
\|\Phi_{\cs}^{+}(k,j)\|\le C_H\exp(\kappa t_{k,j}) \qquad \forall\,k \geq j.
\]
\end{lemma}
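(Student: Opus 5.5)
The plan is to reduce \cref{lem:growth_rates_nonsym} to scalar estimates on each block of a real Jordan-type decomposition of $H$. Write $\sigma_u:=\max\{\Re(\lambda):\lambda\in\operatorname{spec}(H),\Re(\lambda)<0\}<0$, so every eigenvalue $\lambda$ associated with $E_u$ satisfies $\Re(\lambda)\le\sigma_u$, and every eigenvalue associated with $E_{\cs}$ satisfies $\Re(\lambda)\ge0$. Fix $\nu:=|\sigma_u|/2>0$ and any $\kappa\in(0,\nu)$, for instance $\kappa:=\nu/2$. The key observation is that all $M_\ell=I-\alpha_\ell H$ commute, since they are polynomials in the single matrix $H$. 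Hence on each invariant subspace the product $\Phi^+(k,j)$ behaves like $\prod_{\ell}(1-\alpha_\ell\lambda)$ times a nilpotent correction.

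\emph{Step 1 (change of basis with small off-diagonal part).} Choose a (complex, or real block) basis in which $H|_{E_u}=D_u+N_u$ and $H|_{E_{\cs}}=D_{\cs}+N_{\cs}$, where $D$ is (block) diagonal with the eigenvalues and $N$ is nilpotent. Rescale the Jordan chains by powers of a parameter $\eta>0$ so that $\|N\|\le\eta$ in the new basis. This is the standard trick (see \cite[Chapter 3]{HornJohnson2012}). The rescaling costs a fixed condition number $C(\eta)$, which will be absorbed into $C_H$. Equivalently, I can construct an adapted norm $|\cdot|_\eta$ on each subspace with $|x|\le\|x\|\cdot$const and conversely.

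\emph{Step 2 (one-step bounds in the adapted norm).} For each $\ell$, on $E_u$ the inverse $M_\ell^{-1}$ has adapted-norm at most $\max_\lambda|1-\alpha_\ell\lambda|^{-1}+O(\alpha_\ell\eta)$. Since $|1-\alpha\lambda|\ge 1-\alpha\Re\lambda\ge 1+\alpha|\sigma_u|$, we get $|1-\alpha\lambda|^{-1}\le\exp(-\alpha|\sigma_u|+c\alpha^2)$ for $\alpha\|H\|\le1/2$. On $E_{\cs}$, we have $|1-\alpha\lambda|^2=1-2\alpha\Re\lambda+\alpha^2|\lambda|^2\le1+\alpha^2\|H\|^2$. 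This gives $\|M_\ell|_{E_{\cs}}\|\le\exp(c\alpha_\ell^2+\alpha_\ell\eta)$. The nilpotent perturbation contributes a factor $1+O(\alpha_\ell\eta)$, and since $\alpha_\ell\|H\|\le 1/2$ bounds $|1-\alpha\lambda|$ from below by $1/2$ on the unstable part, its inverse is controlled.

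\emph{Step 3 (absorbing the $\alpha^2$ terms).} This is the main obstacle: the step sizes are not assumed square-summable, so $\sum c\alpha_\ell^2$ cannot be bounded by a constant. Instead I will use $\alpha_\ell\to0$: choose $\eta$ small and an index $L$ with $c\alpha_\ell+C\eta\le\min\{\kappa,\nu\}$ for $\ell\ge L$. Then for $\ell\ge L$ the per-step factors are at most $e^{\kappa\alpha_\ell}$ on $E_{\cs}$ and at most $e^{-(|\sigma_u|-\nu)\alpha_\ell}\le e^{-\nu\alpha_\ell}$ on $E_u$. For the finitely many indices $\ell<L$, each factor is uniformly bounded: $\|M_\ell\|\le 3/2$ and $\|M_\ell^{-1}\|\le2$. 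Their contribution is a finite constant times $\exp(\pm c'\sum_{\ell<L}\alpha_\ell)$, and $\sum_{\ell<L}\alpha_\ell\le L\tilde\alpha$ is also finite.

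\emph{Step 4 (assembly).} Multiplying the one-step bounds using commutativity and submultiplicativity of the adapted norm, then converting back to Euclidean norms and including the projection norms $\|P_u\|,\|P_{\cs}\|$, gives both claimed inequalities with a single constant $C_H\ge1$. Strictly, $L$ depends on the step-size sequence, which is acceptable since the sequence is fixed along the reindexed path.
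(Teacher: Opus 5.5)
Your argument is correct, but it handles the nilpotent parts of $H$ differently from the paper.

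\textbf{The paper's route.} The paper fixes a Jordan basis, first real and then complex within each block. For each block it factors $I-\alpha_\ell J=(1-\alpha_\ell\lambda)(I-\beta_\ell N)$ with $\beta_\ell=\alpha_\ell/(1-\alpha_\ell\lambda)$, and bounds the two factors separately.
\begin{itemize}
\item The scalar products are bounded on their own. On the unstable side this uses $\log|1-\alpha\lambda|\ge(\log 2)\,\alpha|\Re\lambda|$, which holds uniformly in the step.
\item The nilpotent products are expanded in powers of $N$, giving a polynomial bound $C_J(1+2t_{k,j})^{\dim J-1}$.
\item That polynomial factor is then absorbed into exponential slack ($\kappa/2$ versus $\kappa$, and $2\nu$ versus $\nu$); this is \cref{lem:generic-jordan-block-transition}.
\end{itemize}

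\textbf{Your route.} You rescale the Jordan chains so that the nilpotent part has norm at most $\eta$. For $\ell\ge L$ this gives per-step factors $e^{\kappa\alpha_\ell}$ on $E_{\cs}$ and $e^{-\nu\alpha_\ell}$ for the inverse on $E_u$, directly in the adapted operator norm. No polynomial factor ever appears, and submultiplicativity finishes the argument. The cost is the conditioning of the $\eta$-dependent basis and an $O(\eta)$ loss in the rates.

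\textbf{Comparison.} Your route is conceptually shorter. It can deliver any $\nu<|\sigma_u|$ and any $\kappa>0$, which is more than the lemma needs. The paper's route is more explicit and uses a basis independent of the target rates. It also needs no exceptional indices on the unstable side. Both arguments handle possibly non-square-summable steps the same way: $\alpha_\ell\to0$ plus a finite initial segment absorbed into $C_H$. In the paper this segment is described by $K_\kappa$ and $B_\kappa$; in yours, by $L$.

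\textbf{A correction to your closing remark.} The reindexing time $m$ is path-dependent, so the main argument actually needs $C_H$ to be independent of the shift. It is not enough that the sequence is fixed along the reindexed path. The needed independence does hold. Your constant is increasing in $L$, and the shifted sequence $\{\alpha_{m+\ell}\}_\ell$ needs only $\max\{L-m,0\}\le L$ exceptional indices. Likewise, the paper's $B_\kappa$ does not increase under shifting.
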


\subsection{Truncated dynamics and Lyapunov--Perron operator}\label{subsec:truncated-lp}
We first localize the nonlinear residual while leaving it unchanged near the
origin. Let $0<\delta\le\bar\rho/2$ be a radius, to be fixed in the later analyses. Let $\chi:\Rd\to[0,1]$ be a $\cC^1$-cutoff such that
\begin{equation}\label{eq:truncation}
\chi(y)=1\quad \text{for }\|y\|\le\delta,\qquad
\chi(y)=0\quad \text{for }\|y\|\ge2\delta\qquad \text{and} \qquad
\|\nabla\chi\|_\infty\le c_\chi/\delta.
\end{equation}
Here, $c_\chi>0$ is independent of $\delta$ and the seed realization.
Next, we define the truncated nonlinear term and its associated dynamics:
\begin{equation}
\label{eq:dyn-trunc}
R_k^\delta(y):=\chi(y)\widetilde R_k(y)\qquad \text{and} \qquad y^{k+1}=M_k y^k-\alpha_k R_k^\delta(y^k)-\alpha_k\widetilde b^k,
\end{equation}
where $\widetilde b^k$ is defined in \eqref{eq:y_recursion}. 
Clearly, the truncated dynamics \eqref{eq:dyn-trunc} agrees with \eqref{eq:linear-dynamics} when $\|y^k\|\le\delta$. In the following, we state Lipschitz properties of the truncated residual terms $\{R_k^\delta\}_k$. The proof is deferred to \cref{subsec:proof-global-psi-property}.
\begin{lemma}[Lipschitz property of residual terms]\label{lem:global_psi_property}
Let $0<\delta\le\bar\rho/2$. Denote
\[
\ell_\delta:={\sup}_{k\in\N}\operatorname{Lip}\bigl(\widetilde R_k;B_{2\delta}(0)\bigr)\qquad \text{where}\qquad \operatorname{Lip}(\widetilde R_k;B_{2\delta}(0)):={\sup}_{\substack{y,w\in B_{2\delta}(0)\\ y\ne w}}\frac{\|\widetilde R_k(y)-\widetilde R_k(w)\|}{\|y-w\|}.
\]
Then, $R_k^\delta:\Rd\to\Rd$ is globally Lipschitz, uniformly in $k$, with modulus $L_\delta:=(1+2c_\chi)\ell_\delta$.
\end{lemma}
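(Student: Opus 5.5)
The plan is to verify the global Lipschitz bound for $R_k^\delta=\chi\widetilde R_k$ by splitting into cases according to where the two points $y,w$ lie relative to the ball $B_{2\delta}(0)$. Two preliminary facts will be used throughout.

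\textbf{Preliminary facts.}
\begin{itemize}
\item $\widetilde R_k(0)=0$. Indeed, $R_k(0)=0$ by definition of the Taylor remainder, and the linear term $B_kU_k^{-1}y$ vanishes at $0$.
\item For $y\in B_{2\delta}(0)$, the previous fact gives $\|\widetilde R_k(y)\|\le \ell_\delta\|y\|\le 2\delta\,\ell_\delta$. Here $\ell_\delta<\infty$ uniformly in $k$ by \cref{lem:psi_properties}, applied with $\rho=2\delta\le\bar\rho$ together with \eqref{eq:tail-smallness}.
\end{itemize}

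\textbf{Case 1: both points in $B_{2\delta}(0)$.} Write
\[
R_k^\delta(y)-R_k^\delta(w)=\chi(y)\bigl(\widetilde R_k(y)-\widetilde R_k(w)\bigr)+\bigl(\chi(y)-\chi(w)\bigr)\widetilde R_k(w).
\]
The first term is bounded by $\ell_\delta\|y-w\|$, since $0\le\chi\le1$. For the second term, use the mean value bound $|\chi(y)-\chi(w)|\le (c_\chi/\delta)\|y-w\|$ (the ball is convex) together with $\|\widetilde R_k(w)\|\le 2\delta\ell_\delta$. This gives $2c_\chi\ell_\delta\|y-w\|$. The total is $(1+2c_\chi)\ell_\delta\|y-w\|$.

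\textbf{Case 2: both points outside $B_{2\delta}(0)$.} Here $\chi(y)=\chi(w)=0$, so the difference is zero.

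\textbf{Case 3: exactly one point inside.} Say $y\in B_{2\delta}(0)$ and $\|w\|\ge2\delta$. Then $R_k^\delta(w)=0$. Parametrize the segment $[y,w]$ and let $w'$ be its last point in the closed ball, i.e.\ the point with $\|w'\|=2\delta$; then $\chi(w')=0$ by continuity. Points on the boundary sphere are not in the open ball, so the estimate is cleanest via a limit: take $w_t\in[y,w']$ with $w_t\to w'$. Case 1 applied to $y,w_t$ gives
\[
\|R_k^\delta(y)-R_k^\delta(w_t)\|\le L_\delta\|y-w_t\|.
\]
Now let $t\to1$. Continuity of $R_k^\delta$ on the closed ball follows from the uniform Lipschitz estimate of $\widetilde R_k$ on $B_{2\delta}(0)$: that estimate extends $\widetilde R_k$ uniformly continuously to the closure, and the extension agrees with $\widetilde R_k$ since $\widetilde R_k$ is continuous near the closed ball ($2\delta\le\bar\rho$, within the differentiability region of \cref{lem:Lip of R_k}). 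Passing to the limit yields
\[
\|R_k^\delta(y)\|\le L_\delta\|y-w'\|\le L_\delta\|y-w\|,
\]
because $w'$ lies on the segment $[y,w]$.

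The only delicate point is this boundary limiting argument in Case 3, together with checking that the constant is uniform in $k$. The uniformity holds because $c_\chi$ is independent of $k$ and $\ell_\delta$ is a supremum over $k$.
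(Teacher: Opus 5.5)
Your proof is correct and follows the paper's argument: the same three-case split, the same product-rule decomposition with $\|\widetilde R_k(w)\|\le 2\delta\ell_\delta$ (from $\widetilde R_k(0)=0$) when both points lie in the ball, and vanishing of $R_k^\delta$ when both lie outside. The only difference is the mixed case, where your boundary limiting argument is not needed. Since $\chi$ is $\mathcal C^1$ on all of $\Rd$ with $\|\nabla\chi\|_\infty\le c_\chi/\delta$, the bound $|\chi(y)-\chi(w)|\le (c_\chi/\delta)\|y-w\|$ holds for arbitrary $y,w$. The paper therefore writes $\|R_k^\delta(y)-R_k^\delta(w)\|=|\chi(y)-\chi(w)|\,\|\widetilde R_k(y)\|\le 2c_\chi\ell_\delta\|y-w\|$ in one line.
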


Using the notation of \cref{lem:growth_rates_nonsym}, choose a weight parameter $
\theta\in(\kappa,\nu)$. 
Furthermore, choose $\varepsilon>0$ in \eqref{eq:tail-smallness} sufficiently small and independently of the seed realization. Then fix $0<\delta\le\bar\rho/2$, also independently of the seed realization, such that
\begin{equation}\label{eq:choose-delta-q}
C_H(1+2c_\chi)\bigl[3\Upsilon(4\delta)+2\varepsilon\bigr]
\left(\frac{1}{\theta-\kappa}+\frac{1}{\nu-\theta}\right)<1.
\end{equation}
The choice of $m$ in \eqref{eq:tail-smallness} gives $
\sup_{k\in\N}(\|S_{k+1}\|\|H\|+\|H\|\|S_k\|+\|S_{k+1}J_k\|)\le\varepsilon$. 
By \cref{lem:psi_properties}, with $\rho=2\delta$, and by \cref{lem:global_psi_property}, it holds that
\[
L_\delta\le (1+2c_\chi)\bigl(3\Upsilon(4\delta)+2\varepsilon\bigr).
\]

\begin{definition}[Weighted sequence space and Lyapunov--Perron (LP) operator]
\label{def:LP-operator}
Define
\[
\cY_\theta:=\left\{\by=(y^0,y^1,\ldots):
\|\by\|_\theta:={\sup}_{k\in\N}\exp(-\theta t_k)\|y^k\|<\infty\right\}\qquad \text{with}\qquad t_k:=t_{k,0}.
\]
This is a Banach space equipped with $\|\cdot\|_\theta$. 
For $\zeta\in E_{\cs}$ and $\by\in\cY_\theta$, define the LP operator by %
\begin{equation}
\label{eq:LP} 
\begin{aligned}
(\cT_\zeta \by)_{\cs}^k
&:= \Phi_{\cs}^{+}(k,0)\zeta
-{\sum}_{j=0}^{k-1}\alpha_j\Phi_{\cs}^{+}(k,j+1)
\Bigl(R_j^\delta(y^j)+\widetilde b^j\Bigr),\\
(\cT_\zeta \by)_{u}^k
&:= {\sum}_{j=k}^{\infty}\alpha_j\Phi_u^{-}(j+1,k)
\Bigl(R_j^\delta(y^j)+\widetilde b^j\Bigr),\\
(\cT_\zeta \by)^k
&:= (\cT_\zeta \by)_{\cs}^k+(\cT_\zeta \by)_u^k.
\end{aligned}
\end{equation}
\end{definition}
To identify trajectories that remain near the unstable zero, we use a variation-of-parameters approach. The center-stable component is propagated forward from its initial value, and the unstable component is represented by a backward tail. In the subsequent results, we show that the LP operator is well-defined and a contraction on $\cY_\theta$, and that it allows characterizing trajectories of \eqref{eq:dyn-trunc}.
 
\begin{lemma}[Well-definedness]\label{lem:well_define}
For the fixed seed realization $\omega_\xi\in\cE_{\rm reg}^*$ and every
$\zeta\in E_{\cs}$, the formulas in \eqref{eq:LP} define a map
$\cT_\zeta:\cY_\theta\to\cY_\theta$.
\end{lemma}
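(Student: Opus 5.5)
The plan is to show that each of the three pieces of $(\cT_\zeta\by)^k$ (the free term, the nonlinear convolution, and the forcing convolution) is well defined and is $O(\exp(\theta t_k))$ uniformly in $k$. This gives $\cT_\zeta\by\in\cY_\theta$. The free term is immediate: by \cref{lem:growth_rates_nonsym}, $\|\Phi_{\cs}^+(k,0)\zeta\|\le C_H\exp(\kappa t_k)\|\zeta\|\le C_H\exp(\theta t_k)\|\zeta\|$. Throughout I will use one elementary comparison. Since $\alpha_j\le\bar\alpha:=\sup_\ell\alpha_\ell<\infty$, for any $c>0$ we have
\[
{\sum}_{j\ge k}\alpha_j\exp(-c\,t_{j,k})\le \frac{\exp(c\bar\alpha)}{c}
\qquad\text{and}\qquad
{\sum}_{j<k}\alpha_j\exp(-c\,t_{k,j})\le\frac{\exp(c\bar\alpha)}{c},
\]
obtained by comparing each sum with $\int_0^\infty e^{-cs}\,ds$ (each summand is bounded by $e^{c\bar\alpha}\int e^{-cs}\,ds$ over an interval of length $\alpha_j$). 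I will also use $t_{j+1,k}=t_{j,k}+\alpha_j$ and $t_{k,j+1}=t_{k,j}-\alpha_j$, which change the exponentials only by bounded factors.

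\emph{Nonlinear part.} Since $R_k(0)=0$, we have $\widetilde R_k(0)=0$, and hence $R_j^\delta(0)=0$. By \cref{lem:global_psi_property}, $\|R_j^\delta(y^j)\|\le L_\delta\|y^j\|\le L_\delta\exp(\theta t_j)\|\by\|_\theta$. For the unstable sum, $\exp(-\nu t_{j+1,k})\exp(\theta t_j)\le \exp(\theta t_k)\exp(-(\nu-\theta)t_{j,k})$. The series therefore converges absolutely and is bounded by $C_HL_\delta\exp((\nu-\theta)\bar\alpha)(\nu-\theta)^{-1}\exp(\theta t_k)\|\by\|_\theta$. Similarly, the center-stable sum is bounded by $C_H L_\delta\exp(\kappa\bar\alpha)\exp(\theta t_k)\sum_{j<k}\alpha_j\exp(-(\theta-\kappa)t_{k,j})\|\by\|_\theta$, which is $O(\exp(\theta t_k))\|\by\|_\theta/(\theta-\kappa)$.

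\emph{Forcing part: the main obstacle.} \cref{assumption:stochastic-oracle} gives only conditional convergence of $\sum\alpha_kb_k$, so I cannot bound $\sum_j\alpha_j\|\widetilde b^j\|\exp(-\nu t_{j,k})$ directly. I will instead use Abel summation. Because $U_{j+1}b_j=b_j-S_{j+1}b_j$, both $\sum_j\alpha_jb_j$ and $\sum_j\alpha_jS_{j+1}b_j$ converge on $\cE_{\rm reg}^*$. Hence the tails $\gamma_j:=\sum_{i\ge j}\alpha_i\widetilde b^i$ satisfy $\gamma_j\to0$ and $\Gamma:=\sup_j\|\gamma_j\|<\infty$, and $\alpha_j\widetilde b^j=\gamma_j-\gamma_{j+1}$. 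For the unstable component I will sum by parts over $k\le j\le N$:
\[
{\sum}_{j=k}^{N}\Phi_u^-(j+1,k)(\gamma_j-\gamma_{j+1})
=\Phi_u^-(k+1,k)\gamma_k+{\sum}_{j=k+1}^{N}\bigl[\Phi_u^-(j+1,k)-\Phi_u^-(j,k)\bigr]\gamma_j-\Phi_u^-(N+1,k)\gamma_{N+1}.
\]
The boundary term tends to $0$ as $N\to\infty$. For the increments, $\Phi_u^-(j+1,k)-\Phi_u^-(j,k)=(M_j^{-1}-I)\Phi_u^-(j,k)$, and $\|M_j^{-1}-I\|\le 2\alpha_j\|H\|$ because $\alpha_j\|H\|\le\frac12$. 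The increment series therefore converges absolutely, with total bound $2\|H\|C_H\Gamma\sum_j\alpha_j\exp(-\nu t_{j,k})$, and the limit as $N\to\infty$ exists. The whole unstable forcing term is thus bounded uniformly in $k$ by a constant times $\Gamma$.

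The center-stable forcing term is handled by the same summation by parts on the finite range $0\le j\le k-1$. The terms are $\Phi_{\cs}^+(k,j+1)(\gamma_j-\gamma_{j+1})$, and the increments are $\Phi_{\cs}^+(k,j+1)-\Phi_{\cs}^+(k,j)=\Phi_{\cs}^+(k,j+1)(I-M_j)$, whose norm is at most $C_H\alpha_j\|H\|\exp(\kappa t_{k,j+1})$. Together with the boundary terms $\Phi_{\cs}^+(k,1)\gamma_0$ and $\gamma_k$, this yields a bound $C\Gamma(1+\exp(\kappa t_k))$, where the comparison inequality controls $\sum_{j<k}\alpha_j\exp(\kappa t_{k,j})\le \exp(\kappa t_k)\sum_{j<k}\alpha_j$. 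To handle that last sum cleanly, I will write $\exp(\kappa t_{k,j})=\exp(\theta t_k)\exp(-\theta t_j)\exp(-(\theta-\kappa)t_{k,j})$ and apply the comparison with $c=\theta-\kappa$. Either way the term is $O(\exp(\theta t_k))$, since $\theta>\kappa$.

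Combining the three pieces gives $\sup_k\exp(-\theta t_k)\|(\cT_\zeta\by)^k\|<\infty$. This proves the lemma.
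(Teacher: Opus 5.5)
Your proposal is correct and follows essentially the paper's route. You bound the free term by the center-stable growth estimate, and the nonlinear convolutions via $R_j^\delta(0)=0$, the global Lipschitz constant $L_\delta$ and sum--integral comparisons. The forcing convolutions you handle by summation by parts against the bounded tails $\gamma_j=\sum_{i\ge j}\alpha_i\widetilde b^i$, which exist because $\sum_k\alpha_kb_k$ and $\sum_k\alpha_kS_{k+1}b_k$ converge. Your explicit treatment of the boundary term $\Phi_u^{-}(N+1,k)\gamma_{N+1}\to0$ makes the convergence of the unstable series more transparent than in the paper.

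There are two small slips in the center-stable forcing step, neither affecting the conclusion. First, the boundary term is $P_{\cs}\gamma_k$ rather than $\gamma_k$. Second, the crude estimate $\exp(\kappa t_k)\sum_{j<k}\alpha_j=t_k\exp(\kappa t_k)$ does not yield the claimed $C\Gamma(1+\exp(\kappa t_k))$; that bound needs the comparison with $\int_0^{t_k}e^{\kappa s}\,ds$, as in the paper. Your second argument, which factors out $\exp(\theta t_k)$ and uses rate $\theta-\kappa$, does give the needed $O(\exp(\theta t_k))$ bound.
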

The proof estimates the forward center-stable sum and the backward unstable
tail in \eqref{eq:LP} under the $\|\cdot\|_\theta$ norm; see
\cref{subsec: proof of well_define}. Next, we present the contraction properties for the operator $\cT_\zeta$. 

\begin{lemma}[Contraction]\label{lem:LP_contraction} 
The operator $\cT_\zeta$ is a contraction on $(\cY_\theta,\|\cdot\|_\theta)$, i.e., 
for all $\by,\bw\in\cY_\theta$, we have
\begin{equation}\label{eq:def q}
\|\cT_\zeta(\by)-\cT_\zeta(\bw)\|_\theta
\le q\,\|\by-\bw\|_\theta
\qquad \text{where} \qquad
q:=C_HL_\delta\left(\frac{1}{\theta-\kappa}+\frac{1}{\nu-\theta}\right)<1.
\end{equation}
\end{lemma}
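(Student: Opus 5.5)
The plan is a direct estimate of the difference. For $\by,\bw\in\cY_\theta$, the terms $\Phi_{\cs}^{+}(k,0)\zeta$ and $\widetilde b^j$ do not depend on the input sequence, so they cancel in $\cT_\zeta\by-\cT_\zeta\bw$. What remains is
\[
(\cT_\zeta\by-\cT_\zeta\bw)^k=-{\sum}_{j=0}^{k-1}\alpha_j\Phi_{\cs}^{+}(k,j+1)\Delta^j+{\sum}_{j=k}^{\infty}\alpha_j\Phi_u^{-}(j+1,k)\Delta^j,\qquad \Delta^j:=R_j^\delta(y^j)-R_j^\delta(w^j).
\]
By \cref{lem:global_psi_property}, $\|\Delta^j\|\le L_\delta\|y^j-w^j\|\le L_\delta e^{\theta t_j}\|\by-\bw\|_\theta$. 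The unstable series converges absolutely: this is the same estimate as below and is essentially the content of \cref{lem:well_define}.

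\emph{Center-stable part.} By \cref{lem:growth_rates_nonsym}, $\|\Phi_{\cs}^{+}(k,j+1)\|\le C_He^{\kappa t_{k,j+1}}$. Multiplying by $e^{-\theta t_k}$ and writing $t_k=t_{k,j+1}+t_{j+1}$ together with $t_{j+1}=t_j+\alpha_j$, I bound the $k$-th term by
\[
C_HL_\delta\|\by-\bw\|_\theta\,{\sum}_{j=0}^{k-1}\alpha_j e^{-(\theta-\kappa)t_{k,j+1}}e^{-\theta\alpha_j}.
\]
The factor $e^{-\theta\alpha_j}\le1$ can be dropped. The remaining sum is a Riemann sum for $\int_0^{t_k}e^{-(\theta-\kappa)s}\,ds$. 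Since the exponent is evaluated at the left endpoint $t_{k,j+1}$ of the interval $[t_{k,j+1},t_{k,j}]$ of length $\alpha_j$, and the integrand is decreasing, the term overestimates the integral over that interval. So comparing with the integral directly gives a bound of the form $\alpha_j e^{-(\theta-\kappa)t_{k,j+1}}\le e^{(\theta-\kappa)\alpha_j}\int_{t_{k,j+1}}^{t_{k,j}}e^{-(\theta-\kappa)s}ds$. To get exactly $1/(\theta-\kappa)$ without the extra factor, I would use the elementary inequality $x\le e^{x}-1$ applied to $x=(\theta-\kappa)\alpha_j$. This yields
\[
\alpha_j e^{-(\theta-\kappa)t_{k,j+1}}\le \tfrac{1}{\theta-\kappa}\bigl(e^{-(\theta-\kappa)t_{k,j+1}}-e^{-(\theta-\kappa)t_{k,j}}\bigr).
\]
The sum then telescopes to at most $1/(\theta-\kappa)$.

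\emph{Unstable part.} Similarly $\|\Phi_u^{-}(j+1,k)\|\le C_He^{-\nu t_{j+1,k}}$, and $e^{-\theta t_k}e^{\theta t_j}=e^{\theta t_{j,k}}$. The $k$-th term is therefore bounded by
\[
C_HL_\delta\|\by-\bw\|_\theta{\sum}_{j\ge k}\alpha_j e^{-\nu t_{j+1,k}+\theta t_{j,k}}\le C_HL_\delta\|\by-\bw\|_\theta{\sum}_{j\ge k}\alpha_j e^{-(\nu-\theta)t_{j+1,k}},
\]
using $t_{j,k}\le t_{j+1,k}$. Here the exponent sits at the right endpoint of the interval $[t_{j,k},t_{j+1,k}]$ and the integrand is decreasing, so $\alpha_j e^{-(\nu-\theta)t_{j+1,k}}\le\int_{t_{j,k}}^{t_{j+1,k}}e^{-(\nu-\theta)s}ds$. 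The sum is thus at most $\int_0^\infty e^{-(\nu-\theta)s}\,ds=1/(\nu-\theta)$.

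\emph{Conclusion.} Adding the two bounds (the triangle inequality for the sum of the components) and taking $\sup_k$ gives the constant $q$ in \eqref{eq:def q}. The bound $L_\delta\le(1+2c_\chi)(3\Upsilon(4\delta)+2\varepsilon)$ and the choice \eqref{eq:choose-delta-q} then give $q<1$.

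The main obstacle is the discrete-time calibration. One has to check that the left-endpoint and right-endpoint placements produce exactly the constants $1/(\theta-\kappa)$ and $1/(\nu-\theta)$ with no step-size-dependent factors. The right-endpoint case is clean. The left-endpoint case needs the $x\le e^x-1$ trick, which I expect to close it.
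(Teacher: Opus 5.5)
You have the same overall structure as the paper: cancellation of the affine terms, the $L_\delta$-Lipschitz bound from \cref{lem:global_psi_property}, the transition bounds, and a Riemann-sum comparison. Your unstable-part estimate is exactly the paper's. The gap is in the center-stable calibration, at the step you flagged. The displayed inequality
\[
\alpha_j e^{-(\theta-\kappa)t_{k,j+1}}\le \tfrac{1}{\theta-\kappa}\bigl(e^{-(\theta-\kappa)t_{k,j+1}}-e^{-(\theta-\kappa)t_{k,j}}\bigr)
\]
is false. Since $t_{k,j}=t_{k,j+1}+\alpha_j$, dividing by $e^{-(\theta-\kappa)t_{k,j+1}}$ turns it into $x\le 1-e^{-x}$ with $x=(\theta-\kappa)\alpha_j>0$, whereas in fact $1-e^{-x}<x$. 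This is your own observation restated: a left-endpoint rectangle of a decreasing integrand overestimates the integral over its interval, and no elementary inequality reverses that.

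What $x\le e^x-1$ actually gives is $\alpha_j e^{-(\theta-\kappa)t_{k,j+1}}\le\int_{t_{k,j+1}-\alpha_j}^{t_{k,j+1}}e^{-(\theta-\kappa)s}\,ds$. These shifted intervals do not tile $[0,t_k]$: they overlap or leave gaps when the steps vary, and the one for $j=k-1$ is $[-\alpha_{k-1},0]$. In fact, once $e^{-\theta\alpha_j}$ has been discarded, $\sum_{j<k}\alpha_je^{-(\theta-\kappa)t_{k,j+1}}$ can genuinely exceed $1/(\theta-\kappa)$. With a constant step $\alpha$ on a long stretch of indices before $k$, it approaches $\alpha/(1-e^{-(\theta-\kappa)\alpha})>1/(\theta-\kappa)$. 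So your route only yields a constant such as $e^{(\theta-\kappa)\sup_j\alpha_j}/(\theta-\kappa)$. That is not the $q$ in \eqref{eq:def q}, and it is not covered by the calibration \eqref{eq:choose-delta-q}.

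The fix is to keep the factor you dropped. Because $\kappa>0$, you have $e^{-\theta\alpha_j}\le e^{-(\theta-\kappa)\alpha_j}$, hence
\[
e^{-(\theta-\kappa)t_{k,j+1}}e^{-\theta\alpha_j}\le e^{-(\theta-\kappa)t_{k,j}}.
\]
This moves the evaluation point to the right endpoint of $[t_{k,j+1},t_{k,j}]$. The right-endpoint comparison you already use in the unstable part then gives $\alpha_je^{-(\theta-\kappa)t_{k,j}}\le\int_{t_{k,j+1}}^{t_{k,j}}e^{-(\theta-\kappa)s}\,ds$. Summing over $j$ bounds the center-stable sum by $\int_0^{t_k}e^{-(\theta-\kappa)s}\,ds\le 1/(\theta-\kappa)$. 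This is precisely the paper's argument, and with this change the rest of your proof goes through as written.
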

The proof is deferred to \cref{subsec: proof of LP_contraction}. 
This lemma ensures the existence of a unique fixed point of $\cT_\zeta$ for each choice of the center-stable coordinate $\zeta$. The initial unstable component of this fixed point is then determined by $\zeta$. The next result uses this
dependence to define a graph over $E_{\cs}$ and shows that every truncated
trajectory in $\cY_\theta$ starts on that graph.

\begin{proposition}[Characterizing trajectories]\label{lem:characterization}
There exists a Lipschitz map $h:E_{\cs}\to E_u$ such that every orbit $\by$ of \eqref{eq:dyn-trunc} satisfying $\by\in\cY_\theta$ has $y^0\in \cG_h:=\{\zeta+h(\zeta):\zeta\in E_{\cs}\}$ with $\mu(\cG_h)=0$.
\end{proposition}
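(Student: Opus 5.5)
By \cref{lem:LP_contraction} and the Banach fixed point theorem, for each $\zeta\in E_{\cs}$ the operator $\cT_\zeta$ has a unique fixed point $\by(\zeta)\in\cY_\theta$. We define
\[
h(\zeta):=(\by(\zeta))^0_u=\sum_{j=0}^\infty\alpha_j\Phi_u^-(j+1,0)\bigl(R_j^\delta(y^j(\zeta))+\widetilde b^j\bigr)\in E_u.
\]
The proof then has three parts: (i) every orbit of \eqref{eq:dyn-trunc} lying in $\cY_\theta$ is a fixed point of $\cT_{\zeta}$ with $\zeta=P_{\cs}y^0$; (ii) $h$ is Lipschitz; (iii) the graph $\cG_h$ is Lebesgue null.

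\textbf{Step (i).} Let $\by\in\cY_\theta$ solve \eqref{eq:dyn-trunc} and put $f^j:=R_j^\delta(y^j)+\widetilde b^j$.

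For the $E_{\cs}$-part, apply $P_{\cs}$ to the recursion and use the invariance of $E_{\cs}$ under each $M_k$. Variation of constants then gives exactly $(\cT_\zeta\by)^k_{\cs}$ with $\zeta=P_{\cs}y^0$.

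For the $E_u$-part, apply $P_u$ and invert $M_j$ on $E_u$ to get
\[
P_uy^k=\Phi_u^-(K,k)P_uy^K+\sum_{j=k}^{K-1}\alpha_j\Phi_u^-(j+1,k)P_uf^j .
\]
To conclude, we let $K\to\infty$. By \cref{lem:growth_rates_nonsym},
\[
\|\Phi_u^-(K,k)P_uy^K\|\le C_H\|P_u\|e^{-\nu t_{K,k}}e^{\theta t_K}\|\by\|_\theta=C_H\|P_u\|e^{\theta t_k}e^{-(\nu-\theta)t_{K,k}}\|\by\|_\theta .
\]
This tends to $0$ because $\sum\alpha_k=\infty$ and $\theta<\nu$. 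Convergence of the sum is exactly the content of \cref{lem:well_define}. Hence $\by=\cT_\zeta\by$, and by uniqueness of the fixed point $\by=\by(\zeta)$. Therefore $y^0=\zeta+h(\zeta)$, since the $E_{\cs}$-component of $(\cT_\zeta\by)^0$ is $\Phi^+_{\cs}(0,0)\zeta=\zeta$.

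\textbf{Step (ii).} This is the Lipschitz estimate, which is standard uniform-contraction reasoning. The dependence on $\zeta$ enters only through $\Phi_{\cs}^+(k,0)\zeta$, and
\[
\|\Phi^+_{\cs}(\cdot,0)(\zeta-\zeta')\|_\theta\le C_H\|P_{\cs}\|\|\zeta-\zeta'\| ,
\]
using $\kappa<\theta$. Combining this with the contraction property yields
\[
\|\by(\zeta)-\by(\zeta')\|_\theta\le q\|\by(\zeta)-\by(\zeta')\|_\theta+C\|\zeta-\zeta'\| ,
\]
so $\|\by(\zeta)-\by(\zeta')\|_\theta\le \frac{C}{1-q}\|\zeta-\zeta'\|$. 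Since $t_0=0$, evaluating at $k=0$ gives
\[
\|h(\zeta)-h(\zeta')\|\le\|P_u\|\,\|\by(\zeta)-\by(\zeta')\|_\theta ,
\]
so $h$ is Lipschitz. The inhomogeneous term $\widetilde b^j$ cancels in all differences, so it plays no role here.

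\textbf{Step (iii).} This is the null-set claim. Because $z^*$ is an unstable zero, $E_u\neq\{0\}$, so $\dim E_{\cs}\le d-1$. The map $\zeta\mapsto\zeta+h(\zeta)$ is Lipschitz from $E_{\cs}$ into $\Rd$. The image of a space of dimension at most $d-1$ under a Lipschitz map has zero $d$-dimensional Hausdorff measure, hence Lebesgue measure zero. Equivalently, one can argue by Fubini in the coordinates $E_{\cs}\oplus E_u$, since each fiber of the graph over $\zeta$ is a single point. Either way $\mu(\cG_h)=0$.

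The main obstacle is the unstable-part identity in Step (i). It requires that every $\cY_\theta$ orbit has the backward-tail representation. This relies on $\theta<\nu$ together with $t_{K,k}\to\infty$, and on absolute convergence of the series involving $\widetilde b^j$. For the latter I would reuse the estimates behind \cref{lem:well_define}: the summability of $\alpha_j b_j$ from \cref{assumption:stochastic-oracle} together with summation by parts against $\Phi_u^-$.
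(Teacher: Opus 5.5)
Your proposal is correct and follows the paper's proof essentially step by step: the graph map $h(\zeta)=P_u y_\zeta^0$ built from the unique fixed point of $\mathcal{T}_\zeta$, the identification of every $\mathcal{Y}_\theta$-orbit with that fixed point by letting the backward boundary term vanish (using $\theta<\nu$ and $t_{K,k}\to\infty$), Lipschitz continuity of $h$ from the uniform contraction, and the null-graph conclusion from $E_u\neq\{0\}$. One small imprecision: the series involving $\widetilde b^j$ need not converge absolutely, since summation by parts only gives conditional convergence. In Step (i) you do not actually need that convergence as a separate input, because the finite identity has a fixed left-hand side and a vanishing boundary term, so the partial sums converge automatically.
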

\begin{proof} 
For each $\zeta\in E_{\cs}$, \cref{lem:LP_contraction} gives a unique
fixed point $\by_\zeta=\cT_\zeta(\by_\zeta)$, whose $k$-th term is denoted by $y_\zeta^k$. Evaluating the center-stable component in \eqref{eq:LP} at $k=0$ yields
$P_{\cs}y_\zeta^0=(\cT_\zeta\by_\zeta)_{\cs}^0=\zeta$. This means that the center-stable initial component is fixed by $\zeta$, and the unstable initial component is determined by the fixed point sequence $\by_\zeta$. Define the mapping and the associated graph
\begin{equation}\label{eq:def h}
h(\zeta):=P_u y_\zeta^0\qquad \text{and} \qquad
\cG_h:=\{\zeta+h(\zeta):\zeta\in E_{\cs}\}.
\end{equation}
We now show that every truncated trajectory in $\cY_\theta$ coincides with a fixed point of $\cT_\zeta$ for a certain choice of $\zeta$. Specifically, let $\by\in\cY_\theta$ be an
arbitrary orbit of \eqref{eq:dyn-trunc}, and set $\zeta:=P_{\cs}y^0$; we will
show that $\by=\by_\zeta$. For ease of notation, the $k$-th term of $\by$ is denoted by $y^k$, and its projections onto $E_{\cs}$ and $E_u$ are denoted by $y_{\cs}^k$ and $y_u^k$, respectively. Iterating the center-stable component of
\eqref{eq:dyn-trunc}, we obtain the center-stable identity in \eqref{eq:LP}, i.e.,
\[
y_{\cs}^k
=\Phi_{\cs}^{+}(k,0)\zeta
-{\sum}_{j=0}^{k-1}\alpha_j\Phi_{\cs}^{+}(k,j+1)\bigl(R_j^\delta(y^j)+\widetilde b^j\bigr).
\]
For $m>k$, the unstable component satisfies
\begin{equation}
    \label{eq:unstable component}
    y_u^k
=\Phi_u^{-}(m,k)y_u^m
+{\sum}_{j=k}^{m-1}\alpha_j\Phi_u^{-}(j+1,k)\bigl(R_j^\delta(y^j)+\widetilde b^j\bigr).
\end{equation}
Moreover, by \cref{lem:growth_rates_nonsym}, we have for all $m\geq k$,
\[
\|\Phi_u^{-}(m,k)y_u^m\|
\le C_H\exp(-\nu t_{m,k})\exp(\theta t_m)\|\by\|_\theta
=C_H\exp(\nu t_k)\exp(-(\nu-\theta)t_m)\|\by\|_\theta\to0.
\]
Letting $m\to\infty$ in \eqref{eq:unstable component} yields the unstable identity $y_u^k = (\cT_\zeta\by)_u^k$ in \eqref{eq:LP}. Hence, $\by=\cT_\zeta(\by)$. Uniqueness of the fixed point gives $\by=\by_\zeta$; hence, $y^0=\zeta + P_uy^0=\zeta+h(\zeta)\in\cG_h$. Finally, we show that the graph $\cG_h$ has measure zero. To this end, we use the
Lipschitz continuity of $h$. The proof of the following claim is deferred to
\cref{subsec: proof of Lipschitz h}.

\begin{claim}\label{claim:Lipschitz-h}
The mapping $h:E_{\cs}\to E_u$ defined in \eqref{eq:def h} is Lipschitz continuous.
\end{claim}
Since $E_u\neq\{0\}$, the space $E_{\cs}$ is a proper subspace of $\Rd$. After a linear change of coordinates,
$\cG_h$ is the graph of a Lipschitz map from $\R^{\dim(E_{\cs})}$ to
$\R^{\dim(E_{u})}$. Therefore, it has Lebesgue measure zero.
\end{proof}

\subsection{Completion of the local null-set argument}
\label{subsec:local-null-set-argument}
Returning to the original indexing, we now complete the proof of \cref{thm:local-stable-null}. Fix
$z^*\in\cZ^*$ and a seed realization
$\omega_\xi\in\cE_{\rm reg}(z^*)$. By construction, the radius $\delta$ in \eqref{eq:choose-delta-q} is independent of the seed realization. Hence, the quantity $\delta_0:=2\delta/3$ is independent of $\omega_\xi$. Since $\alpha_k\to0$ and the tail quantities in \eqref{eq:tail-smallness} vanish, choose $m_0\in\N$ sufficiently large so that the transition bounds in \cref{lem:growth_rates_nonsym} apply and such that, for every $k\ge m_0$,
\[
\alpha_k\|H\|\le\tfrac12,\qquad
\|S_k\|\le\tfrac12
\qquad \text{and} \qquad
{\sup}_{\ell\ge k}~(\|S_{\ell+1}\|\|H\|+\|H\|\|S_\ell\|+\|S_{\ell+1}J_\ell\|)\le\varepsilon.
\]
Fix $m\ge m_0$. Discard the first $m$ iterates and relabel $m$ as the
initial index. Let $\cG_{h,m}$ denote the null Lipschitz graph supplied by
\cref{lem:characterization} for this reindexed system, and
define
\[ 
\mathcal N_m:=
\{x\in\Rd:U_m(x-z^*)\in\cG_{h,m}\}\qquad \text{where}\qquad U_m:=I-S_m.
\]
Since $U_m$ is invertible and $\mu(\cG_{h,m})=0$, this affine preimage is
Lebesgue null. Next, we consider an arbitrary orbit satisfying
\[
\|z^k-z^*\|\le\delta_0\qquad\forall\, k\ge m.
\]
In the reindexed and centered coordinates, set
\[
\bar z^j:=z^{m+j}-z^*
\qquad \text{and}\qquad 
y^j:=U_{m+j}\bar z^j,
\qquad \forall\,j\ge0.
\] 
By the bounds $\|S_{m+j}\|\leq \frac12$ and $\|\bar{z}^j\|\leq\delta_0$ for all $j\geq 0$, we have $
\|y^j\|\le \|U_{m+j}\|\,\|\bar z^j\|
\le \tfrac32\delta_0=\delta$. 
Hence, the cutoff is inactive along the trajectory, and the truncated dynamics \eqref{eq:dyn-trunc} agree with the shifted dynamics \eqref{eq:linear-dynamics} on this orbit. In addition, we have $\by\in\cY_\theta$ due to
\[
\|\by\|_\theta
\le \sup_{j\in\N}\exp(-\theta t_j)\delta
\le \delta<\infty.
\]
Applying \cref{lem:characterization} to the reindexed
system gives $y^0\in\cG_{h,m}$. Since $y^0=U_m(z^m-z^*)$, the definition of
$\mathcal N_m$ yields $z^m\in\mathcal N_m$. Thus, for every $m\ge m_0$, any
orbit trapped in $B_{\delta_0}(z^*)$ from time $m$ onward has its $m$-th iterate
in the Lebesgue null set $\mathcal N_m$. \hfill $\qed$

\section{Application I: Stochastic approximation-based methods}
\label{sec:application-sa-methods}

We first consider stochastic approximation methods under typical i.i.d.\ sampling conditions. In these applications, sampled
gradients are used to approximate the gradient of an expected loss, and our main task is to verify the pathwise oracle conditions formulated in \Cref{assumption:stochastic-oracle}.

\begin{assumption}[Sample gradients and Hessians]
\label{assumption:sample-losses}
Let $(\Xi,\cA,\Prob_\Xi)$ be the single-sample probability space. Let
$\xi$ denote a generic sample with values in $\Xi$, and let
$\{\xi^k\}_{k\in\N}$ be the samples used by the algorithm. We assume that
$\{\xi^k\}_{k\in\N}$ are i.i.d.\ with common law $\Prob_\Xi$. Thus, each
$\xi^k$ takes values in $\Xi$. Let
$\cX\subseteq\Rd$ be open. For each $\xi\in\Xi$, let
$f(\cdot;\xi):\cX\to\R$ be $\cC^2$, and define
$f(x)=\Exp[f(x;\xi)]$ for all $x\in\cX$. We make the following assumptions:
\begin{itemize}
    \item \emph{(local Hessian Lipschitzness).} For every compact
    $\mathcal Q\subset\cX$, there is $L_{\mathcal Q}<\infty$ such that
    \[
        \|\nabla^2 f(x;\xi)-\nabla^2 f(y;\xi)\|
        \le L_{\mathcal Q}\|x-y\|
        \qquad
        \forall\,x,y\in\mathcal Q,\quad \forall\,\xi\in\Xi.
    \]
    \item \emph{(unbiased derivatives).} For every fixed $\bar x\in\cX$
    and every $k\in\N$,
    \[
        \Exp[\nabla f(\bar x;\xi^k)]=\nabla f(\bar x)
        \qquad \text{and} \qquad
        \Exp[\nabla^2 f(\bar x;\xi^k)]=\nabla^2 f(\bar x).
    \]
    \item \emph{(finite moment bounds).} There exists an exponent $q>2$ such
    that, for every fixed $\bar x\in\cX$ and every $k\in\N$,
    \[
        \Exp[\|\nabla f(\bar x;\xi^k)-\nabla f(\bar x)\|^2]
        <\infty
        \qquad \text{and} \qquad
        \Exp[\|\nabla^2 f(\bar x;\xi^k)-\nabla^2 f(\bar x)\|^q]
        <\infty.
    \]
\end{itemize}
\end{assumption}
The conditions in \cref{assumption:sample-losses} are used to verify the
pathwise oracle condition in the abstract theorem. The i.i.d.\ condition
corresponds to sampling with replacement. The Lipschitz condition on the sample Hessians is local, which is natural in finite-sum models.
The unbiasedness and finite moment bounds provide the
pointwise stochastic estimates needed in
\cref{lem:iid-moment-bounds-imply-stochastic-oracle}. 
We now list several examples in which the finite $q$-th moment bound holds.

\begin{example}[Finite-sum objectives]
Consider $f(x)=\frac1n{\sum}_{i=1}^n f_i(x)$, where each $f_i$ is $\cC^2$ on
$\cX$. Take $\Xi=\{1,\ldots,n\}$ with the uniform distribution and set
$f(\cdot;i):=f_i$. If the samples $\{\xi^k\}_{k\in\N}$ are drawn i.i.d.\ from
this distribution, then,
for every fixed $\bar x\in\cX$ and every $k\in\N$,
\[
\Exp[\nabla f(\bar x;\xi^k)]=\nabla f(\bar x)
\qquad \text{and} \qquad
\Exp[\nabla^2 f(\bar x;\xi^k)]=\nabla^2 f(\bar x).
\]
Moreover, $
\Exp[\|\nabla f(\bar x;\xi^k)-\nabla f(\bar x)\|^2]
\le
\max_{1\le i\le n}\|\nabla f_i(\bar x)-\nabla f(\bar x)\|^2
<\infty$ and 
\[
\Exp[\|\nabla^2 f(\bar x;\xi^k)-\nabla^2 f(\bar x)\|^q]
\le
{\max}_{1\le i\le n}\|\nabla^2 f_i(\bar x)-\nabla^2 f(\bar x)\|^q
<\infty.
\]
Thus, when 
the Hessians $\nabla^2 f_i$ are locally Lipschitz on $\cX$, the finite moment bound holds for all $q > 2$. 
\end{example}

Next, we consider expected-loss models with general sampling distributions.

\begin{example}[Uniformly smooth sample losses] \label{ex:uniform}
Consider $f(x)=\Exp[f(x;\xi)]$ and suppose that each sample loss
$f(\cdot;\xi)$ is $\cC^2$ on $\cX$ and has $\sL$-Lipschitz gradient ---
uniformly in $\xi$. Then, we have
\[
\|\nabla^2 f(x;\xi)\|\le \sL
\qquad
\forall\,x\in\cX,\quad \forall\,\xi\in\Xi.
\]
Moreover, under the condition
$\Exp[\nabla^2 f(\bar x;\xi)]=\nabla^2 f(\bar x)$, we can infer $\|\nabla^2 f(\bar x)\|\le\sL$, and it follows that
\[
\Exp[\|\nabla^2 f(\bar x;\xi)-\nabla^2 f(\bar x)\|^q]
\le (2\sL)^q<\infty
\qquad
\forall\,q>0.
\]
Consequently, the finite $q$-th moment bound condition naturally holds in this setting.
\end{example}

We now show that sub-Gaussian noises satisfy the finite
moment conditions. This does not require each $f(\cdot;\xi)$ to satisfy Lipschitz-smoothness conditions as in \Cref{ex:uniform}.

\begin{example}[Sub-Gaussian noise]
The finite moment bounds in \cref{assumption:sample-losses} follow
directly from the standard equivalence between sub-Gaussian tails and moment
growth \cite[Proposition~2.5.2]{Vershynin2018HighDimensionalProbability}.
Specifically, fix $\bar x\in\cX$ and suppose
that, for some constant $K_{\bar x}>0$, we have
\begin{align*}
    \Prob\!\left(
    \left|\langle \nabla f(\bar x;\xi^k)-\nabla f(\bar x),u\rangle\right|
    \ge t
\right)
&\le
2\exp\!\left(-{t^2}{K_{\bar x}^{-2}}\right)
\qquad
\forall\,\|u\|=1,\quad \forall\,k\in\N,\quad \forall\,t\ge0,\\
\Prob\!\left(
    \left|\langle \nabla^2 f(\bar x;\xi^k)-\nabla^2 f(\bar x),U\rangle\right|
    \ge t
\right)
&\le
2\exp\!\left(-{t^2}{K_{\bar x}^{-2}}\right)
\qquad
\forall\,\|U\|_F=1,\quad \forall\,k\in\N,\quad \forall\,t\ge0.
\end{align*}
Here, the space of symmetric $d \times d$ matrices is equipped with the Frobenius inner product. Applying \cite[Proposition~2.5.2]{Vershynin2018HighDimensionalProbability} to
coordinate projections gives
\[
\Exp[\|\nabla f(\bar x;\xi^k)-\nabla f(\bar x)\|^2]<\infty
\qquad \text{and} \qquad
\Exp[\|\nabla^2 f(\bar x;\xi^k)-\nabla^2 f(\bar x)\|^q]<\infty
\]
for every $q\ge1$ and $k\in\N$. Thus, the finite moment bounds in
\cref{assumption:sample-losses} hold.
\end{example}

\subsection{Stochastic mirror descent}
\label{subsec:smd-application}
This subsection applies the abstract avoidance theorem to stochastic mirror
descent. In particular, we establish almost sure non-convergence to strict saddles without the unit-excitation condition \eqref{eq:assumption UE}.

Let $\cX\subset\Rd$ be open and convex and consider the expected loss problem
\begin{equation}\label{eq:smd-objective}
{\min}_{x\in\cX}~f(x):=\Exp[f(x;\xi)].
\end{equation}
Here, $\xi$ denotes a generic sample, while the samples used by the algorithm
are denoted by $\{\xi^k\}_k$. Let $h$ be a distance-generating function (also called a kernel) and denote the associated Bregman divergence by 
\[
    D_h(y,x):=h(y)-h(x)-\langle\nabla h(x),y-x\rangle.
\]
We study the following version of the stochastic mirror descent method
\cite{nemjudlansha08,bottou2018optimization}.

\begin{algorithm}[h]
\caption{Stochastic mirror descent}
\label{alg:smd-iteration}
\begin{algorithmic}
    \Require Choose an initial point $x^0\in\cX$ and step-sizes
    $\{\alpha_k\}_{k\ge0}$.
    \For{$k=0,1,2,\ldots$}
        \State Draw a sample $\xi^k$ and set
        \[
        x^{k+1}
        =
        {\argmin}_{y\in\overline{\cX}}~\left\{
        \langle \nabla f(x^k;\xi^k),y-x^k\rangle
        + {\alpha_k}^{-1}D_h(y,x^k)
        \right\}.
        \]
    \EndFor
\end{algorithmic}
\end{algorithm}
Using the quadratic (Euclidean) kernel $h(x)=\frac12\|x\|^2$, we obtain
$D_h(y,x)=\frac12\|y-x\|^2$. In this special case and if $\cX=\Rd$, \cref{alg:smd-iteration} reduces to the classical stochastic gradient descent method:
\[
    x^{k+1}=x^k-\alpha_k\nabla f(x^k;\xi^k).
\]
Hence, the results derived in this section naturally apply to SGD\@. Throughout this subsection, we work under the following assumptions on the mirror geometry and the sampled
losses.

\begin{assumption}
\label{assumption:smd-regularity}
We consider the following conditions:
\begin{enumerate}[label=\emph{(\roman*)}, leftmargin=*] 
\item \emph{Mirror geometry.}
The kernel $h:\Rd\to(-\infty,+\infty]$ is essentially
smooth\footnote{Essential smoothness means that whenever
$x^j\in\cX$ converges to a boundary point of $\operatorname{dom}h$,
one has $\|\nabla h(x^j)\|\to\infty$.} with
\[
\cX=\operatorname{int}(\operatorname{dom}h)
\qquad \text{and} \qquad
\operatorname{dom}h^*=\Rd.
\]
On $\cX$, $h$ is $\cC^2$ and satisfies
$\nabla^2 h(x)\succ 0$ for all $x\in\cX$.

\item \emph{Relative smoothness.}
Each $f(\cdot;\xi)$ is relatively smooth with respect to $h$: There
is $\sL>0$ such that
\begin{equation}\label{eq:smd-relative-smoothness}
\left|
f(y;\xi)-f(x;\xi)-\langle\nabla f(x;\xi),y-x\rangle
\right|
\le \sL D_h(y,x)
\qquad
\forall\,x,y\in\cX,\quad \forall\,\xi\in\Xi.
\end{equation}
\end{enumerate}
\end{assumption}
The mirror geometry assumption is a standard regularity condition for the kernel $h$, which ensures the well-definedness of the mirror update. 
Relative smoothness is the alternative to global Lipschitz gradient
continuity used in Bregman first-order methods
\cite{BauschkeBolteTeboulle2017,LuFreundNesterov2018}. For the quadratic kernel $h(x)=\frac12\|x\|^2$, \eqref{eq:smd-relative-smoothness} reduces to the usual descent property.

We now reformulate the mirror descent iteration in the recursion \eqref{eq:stochastic-fixed-point-method} considered by the abstract avoidance theorem. By the conjugacy theorem \cite[Theorem~26.5]{Rockafellar1970ConvexAnalysis},
$\nabla h:\cX\to\operatorname{int}(\operatorname{dom}h^*)=\Rd$ is bijective,
with inverse $\nabla h^*$. Since $\nabla^2h$ is positive definite on
$\cX$, the inverse function theorem
\cite[Theorem~C.34]{Lee2012IntroductionSmoothManifolds} shows that these maps
are inverse $\cC^1$-diffeomorphisms. Consequently, we have $h^*\in\cC^2(\Rd)$ and 
\[
\nabla h^*(z)=(\nabla h)^{-1}(z)
\qquad \text{and} \qquad
\nabla^2h^*(z)=
\bigl[\nabla^2h(\nabla h^*(z))\bigr]^{-1} \qquad \forall\,z\in\Rd.
\]
Set $z^k:=\nabla h(x^k)$. The optimality condition for the mirror descent update in
\cref{alg:smd-iteration} gives
\begin{equation}\label{eq:smd-dual-fixed-point}
z^{k+1}=z^k-\alpha_kG(z^k;\xi^k)
\qquad \text{where} \qquad
G(z;\xi):=\nabla f(\nabla h^*(z);\xi),
\end{equation}
see \cite[Section 2]{BeckTeboulle2003}. The associated mean field is 
\[ F(z):=\nabla f(\nabla h^*(z)). \] 
Thus, stochastic mirror descent fits the stochastic recursion \eqref{eq:stochastic-fixed-point-method}. Next, we verify that
the oracle $G$ defined in \eqref{eq:smd-dual-fixed-point} satisfies the structural conditions in \cref{assumption:F}
and the stochastic oracle conditions in \cref{assumption:stochastic-oracle} required by the avoidance framework. 

\begin{proposition}
\label{prop:smd-sample-losses-imply-stochastic-oracle}
Suppose
\cref{assumption:smd-regularity,assumption:sample-losses} hold and the
step-sizes satisfy
\[
{\sum}_{k=0}^\infty \alpha_k=\infty\qquad \text{and} \qquad {\sum}_{k=0}^{\infty}r_{k+1}^{q/2}<\infty \quad \text{with}\quad r_k:={\sum}_{i=k}^{\infty}\alpha_i^2.
\]
Then, \cref{assumption:F,assumption:stochastic-oracle} hold for $G(\cdot;\xi):=\nabla f(\nabla h^*(\cdot);\xi)$.
\end{proposition}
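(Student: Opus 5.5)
The plan is to verify the three items of \cref{assumption:F} directly from \cref{assumption:smd-regularity}, and then obtain \cref{assumption:stochastic-oracle} by applying \cref{lem:iid-moment-bounds-imply-stochastic-oracle}. Throughout, I use that $h^*\in\cC^2(\Rd)$ with $\nabla^2h^*(z)=[\nabla^2h(\nabla h^*(z))]^{-1}$. I also use the chain rule
\[
{\rm D}G(z;\xi)=\nabla^2 f(\nabla h^*(z);\xi)\,\nabla^2h^*(z),\qquad {\rm D}F(z)=\nabla^2 f(\nabla h^*(z))\,\nabla^2h^*(z),
\]
where $\nabla^2 f(x)=\Exp[\nabla^2 f(x;\xi)]$ by the unbiasedness in \cref{assumption:sample-losses}. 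I will take $\cU_{z^*}=\Rd$ for every $z^*$, since $\nabla h^*$ maps $\Rd$ into $\cX$.

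\emph{Regularity and equicontinuity.} First I extract a local uniform Hessian bound from relative smoothness. Applying \eqref{eq:smd-relative-smoothness} with $y=x+tv$ and letting $t\to0$ gives $|\langle\nabla^2 f(x;\xi)v,v\rangle|\le \sL\langle\nabla^2h(x)v,v\rangle$. Since $\nabla^2 f(x;\xi)$ is symmetric, this yields $\|\nabla^2 f(x;\xi)\|\le \sL\,\lambda_{\max}(\nabla^2 h(x))$, uniformly in $\xi$.

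Now fix a compact $\mathcal Q\subset\Rd$. Its image $\nabla h^*(\mathcal Q)$ is a compact subset of $\cX$. On that image the sample Hessians are uniformly bounded (by the bound above) and uniformly Lipschitz (by local Hessian Lipschitzness). Moreover, $\nabla h^*$ and $\nabla^2h^*$ are uniformly continuous on $\mathcal Q$. A product of uniformly bounded, equicontinuous families is equicontinuous, which gives equicontinuity of $\{{\rm D}G(\cdot;\xi)\}_\xi$ on $\mathcal Q$.

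The same Lipschitz bound transfers to the mean Hessian by averaging. Hence $\nabla^2 f$ is continuous, so $F$ is $\cC^1$. Differentiating $\nabla f=\Exp[\nabla f(\cdot;\xi)]$ under the expectation is justified by the uniform local bounds.

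\emph{Lipeomorphism.} This is the step I expect to be the main obstacle. I take $\tilde\alpha<1/\sL$. Relative smoothness implies that $\sL h\mp f(\cdot;\xi)$ are convex on $\cX$. Hence
\[
\psi_w(x):=h(x)-\alpha f(x;\xi)-\langle w,x\rangle=(1-\alpha\sL)h(x)+\alpha\bigl(\sL h-f(\cdot;\xi)\bigr)(x)-\langle w,x\rangle
\]
is strictly convex, with Hessian $\succeq(1-\alpha\sL)\nabla^2h\succ0$.

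Solving $T_{\alpha,\xi}(z)=w$ is equivalent, via $x=\nabla h^*(z)$, to solving $\nabla\psi_w(x)=0$. Uniqueness follows from strict convexity. For existence, I bound the convex term $\sL h-f(\cdot;\xi)$ below by an affine minorant. Then $\dom h^*=\Rd$ makes $(1-\alpha\sL)h-\langle w',\cdot\rangle$ bounded below and coercive for every $w'$, so $\psi_w$ attains its minimum. Essential smoothness of $h$ forces the minimizer into $\operatorname{int}\dom h=\cX$, where $\nabla\psi_w=0$. If the coercivity step needs more care, I would instead show that $\nabla h-\alpha\nabla f(\cdot;\xi)$ is the gradient of a Legendre function and invoke \cite[Theorem~26.5]{Rockafellar1970ConvexAnalysis}.

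Once $T_{\alpha,\xi}$ is a bijection, note that its derivative factors as
\[
{\rm D}T_{\alpha,\xi}(z)=\bigl(\nabla^2h-\alpha\nabla^2f(\cdot;\xi)\bigr)(\nabla h^*(z))\,\nabla^2h^*(z),
\]
which is invertible. The inverse function theorem then makes $T_{\alpha,\xi}$ a $\cC^1$ diffeomorphism, so both it and its inverse are locally Lipschitz.

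\emph{Stochastic oracle.} Fix $\bar z\in\cZ^*$ and set $\bar x=\nabla h^*(\bar z)$. Then
\[
b_k=\nabla f(\bar x;\xi^k)-\nabla f(\bar x),\qquad J_k=\bigl(\nabla^2 f(\bar x;\xi^k)-\nabla^2 f(\bar x)\bigr)\nabla^2h^*(\bar z).
\]
By \cref{assumption:sample-losses} these are i.i.d. and have mean zero, and $\Exp\|b_k\|^2<\infty$. Since $\nabla^2h^*(\bar z)$ is a fixed matrix and norms on $\R^{d\times d}$ are equivalent, $\Exp\|J_k\|_F^q\le C\,\Exp\|\nabla^2 f(\bar x;\xi^k)-\nabla^2 f(\bar x)\|^q<\infty$. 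Together with the step-size hypothesis $\sum_k r_{k+1}^{q/2}<\infty$, \cref{lem:iid-moment-bounds-imply-stochastic-oracle} yields \cref{assumption:stochastic-oracle}. The measurability requirements of \cref{def:probability-model} follow from the continuity of all maps in $z$ and their measurability in $\xi$.
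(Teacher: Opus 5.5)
Your proposal is correct and follows essentially the same route as the paper. It uses the chain-rule Jacobians and the two-sided bound $-\sL\nabla^2 h\preceq\nabla^2 f(\cdot;\xi)\preceq\sL\nabla^2 h$ from relative smoothness. It gets a bijection from strict convexity of $h-\alpha f(\cdot;\xi)=(1-\alpha\sL)h+\alpha(\sL h-f(\cdot;\xi))$, which the inverse function theorem upgrades to a $\cC^1$-diffeomorphism. It then obtains equicontinuity by the chain rule on compact sets, and the oracle condition from the martingale-type noise proposition.

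The one real difference is the existence of the preimage. You justify it explicitly: an affine minorant, coercivity from $\operatorname{dom}h^*=\Rd$, and essential smoothness to keep the minimizer in $\cX$. The paper simply invokes Rockafellar's Theorem 26.5 at that step.
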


\begin{proof}
We first verify the regularity requirements in \cref{assumption:F}, and then the stochastic oracle conditions in \cref{assumption:stochastic-oracle}. We begin with two estimates used in both parts. For $z\in\Rd$, the chain rule gives
\begin{equation}\label{eq:smd-chain-rule-jacobians}
{\rm D}G(z;\xi)=\nabla^2 f(\nabla h^*(z);\xi)\nabla^2h^*(z)
\qquad \text{and} \qquad
{\rm D}F(z)=\nabla^2f(\nabla h^*(z))\nabla^2h^*(z).
\end{equation}
Together with the $\cC^2$-regularity stated in
\cref{assumption:smd-regularity,assumption:sample-losses}, \eqref{eq:smd-chain-rule-jacobians} implies that $F$ is $\cC^1$ on $\Rd$, which is the regularity condition in \cref{assumption:F}.
Since $f(\cdot;\xi)$ and $h$ are $\cC^2$, by \cite[Lemma 2.1]{bolte2018first}, the relative smoothness implies that
\[
-\sL\nabla^2h(x)
\preceq
\nabla^2 f(x;\xi)
\preceq
\sL\nabla^2h(x)
\qquad
\forall\,x\in\cX,\quad \forall\,\xi\in\Xi.
\]
Hence, for every compact set $\mathcal Q_x\subset\cX$, it holds that
\begin{equation}\label{eq:smd-compact-hessian-bound}
{\sup}_{\xi\in\Xi}{\sup}_{x\in\mathcal Q_x}
\|\nabla^2 f(x;\xi)\|
\le
\sL\cdot {\sup}_{x\in\mathcal Q_x}\|\nabla^2 h(x)\|<\infty.
\end{equation}

\noindent\textbf{Part I:} Verifying \cref{assumption:F}.
We first prove that $z\mapsto z-\alpha G(z;\xi)$ is a lipeomorphism for
every $\xi\in\Xi$ and every $0<\alpha <1/\sL$. Fix such $\xi$ and
$\alpha$, and define
\[
H_{\alpha,\xi}:=h-\alpha f(\cdot;\xi),
\qquad
S_{\alpha,\xi}:= \nabla H_{\alpha,\xi} = \nabla h-\alpha\nabla f(\cdot;\xi).
\]
By the relative smoothness, $\sL h-f(\cdot;\xi)$ is convex on $\cX$, see \cite{BauschkeBolteTeboulle2017,bolte2018first,LuFreundNesterov2018}. After rearranging, we obtain
\[
H_{\alpha,\xi}
=
(1-\alpha\sL)h+\alpha(\sL h-f(\cdot;\xi)),
\]
and thus, $H_{\alpha,\xi}$ is strictly convex on $\cX$, since $1-\alpha\sL>0$. Applying the well-definedness result for mirror descent \cite[Theorem 26.5]{Rockafellar1970ConvexAnalysis}, this yields, for every $y\in\Rd$, a unique minimizer $x_y\in\cX$ of the problem
\[
{\min}_{x\in\overline{\cX}}~\left\{H_{\alpha,\xi}(x)-\langle y,x\rangle\right\}.
\]
The associated first-order necessary condition is $S_{\alpha,\xi}(x_y)=y$, see \cite[Theorem 27.4]{Rockafellar1970ConvexAnalysis}. Since $y$ is arbitrary
and the minimizer is unique, the map $S_{\alpha,\xi}:\cX\to\Rd$ is bijective.

We next show that this bijection is $\cC^1$ and has a $\cC^1$-inverse.
Relative smoothness implies
\[
{\rm D}S_{\alpha,\xi}(x)
=
\nabla^2h(x)-\alpha\nabla^2 f(x;\xi)
\succeq
(1-\alpha\sL)\nabla^2h(x)
\succ 0 \qquad \text{for every $x\in\cX$}.
\]
Hence, the inverse function theorem gives a local $\cC^1$-inverse around every
point. Since $S_{\alpha,\xi}$ is globally bijective, these local inverses
agree, and $S_{\alpha,\xi}$ is a $\cC^1$-diffeomorphism from $\cX$ to $\Rd$.
Finally, noting that
\[
z-\alpha G(z;\xi)=S_{\alpha,\xi}(\nabla h^*(z)),
\]
and that $\nabla h^*:\Rd\to\cX$ is a $\cC^1$-diffeomorphism, we infer that the mapping $z\mapsto z-\alpha G(z;\xi)$ is a $\cC^1$-diffeomorphism from $\Rd$ to $\Rd$. Hence, the lipeomorphism condition in
\cref{assumption:F} holds for any fixed $\tilde\alpha<1/\sL$.

It remains to verify the local equicontinuity of the transformed Jacobians. Fix a compact set $\mathcal Q_z\subset\Rd$. After replacing $\mathcal Q_z$ by its convex hull, set $\mathcal Q_x:=\nabla h^*(\mathcal Q_z)\subset\cX$. Since
$\nabla h^*$ is continuous, $\mathcal Q_x$ is compact. Since
$h^*\in\cC^2(\Rd)$ and $\mathcal Q_z$ is compact and convex, there exists a
constant $M_{\mathcal Q}\ge1$ such that, for all $z,w\in\mathcal Q_z$, we have
\[
\|\nabla^2h^*(z)\|\le M_{\mathcal Q}\qquad \text{and}\qquad
\|\nabla h^*(z)-\nabla h^*(w)\|\le M_{\mathcal Q}\|z-w\|.
\]
The local Lipschitz continuity of the Hessian in \cref{assumption:sample-losses} and the bound \eqref{eq:smd-compact-hessian-bound} give, after increasing $M_{\mathcal Q}$ if necessary, for all $x,y\in\mathcal Q_x$ and all $\xi\in\Xi$ that
\[
\|\nabla^2 f(x;\xi)\|\le M_{\mathcal Q}\qquad \text{and}\qquad
\|\nabla^2 f(x;\xi)-\nabla^2 f(y;\xi)\|
\le M_{\mathcal Q}\|x-y\|.
\]
By the chain rule \eqref{eq:smd-chain-rule-jacobians}, for
$z,w\in\mathcal Q_z$ and $\xi\in\Xi$, we then obtain
\begin{align*}
&\hspace{5mm}\|{\rm D}G(z;\xi)-{\rm D}G(w;\xi)\|\\
&\le
\|\nabla^2 f(\nabla h^*(z);\xi)-\nabla^2 f(\nabla h^*(w);\xi)\|
\|\nabla^2h^*(z)\|
+\|\nabla^2 f(\nabla h^*(w);\xi)\|
\|\nabla^2h^*(z)-\nabla^2h^*(w)\| \\
&\le
M_{\mathcal Q}^2\|\nabla h^*(z)-\nabla h^*(w)\|
+M_{\mathcal Q}\|\nabla^2h^*(z)-\nabla^2h^*(w)\| \\
&\le
M_{\mathcal Q}^3\|z-w\|
+M_{\mathcal Q}\|\nabla^2h^*(z)-\nabla^2h^*(w)\|.
\end{align*}
Since $h^*\in\cC^2(\Rd)$, the Hessian $\nabla^2h^*$ is uniformly continuous
on $\mathcal Q_z$. The above estimate is independent of $\xi$, and therefore
the family $\{{\rm D}G(\cdot;\xi)\}_{\xi\in\Xi}$ is equicontinuous on
$\mathcal Q_z$.\\[-2mm]

\noindent\textbf{Part II:} Verifying \cref{assumption:stochastic-oracle}. 
Fix $\bar z\in\Rd$ and set $\bar x:=\nabla h^*(\bar z)$. The relation
\eqref{eq:smd-chain-rule-jacobians} gives
\begin{align*}
b_k&:=G(\bar z;\xi^k)-F(\bar z)
=
\nabla f(\bar x;\xi^k)-\nabla f(\bar x)\qquad \text{and}\\
J_k&:={\rm D}G(\bar z;\xi^k)-{\rm D}F(\bar z)=
\bigl(\nabla^2 f(\bar x;\xi^k)-\nabla^2f(\bar x)\bigr)
\nabla^2h^*(\bar z).
\end{align*}
It follows from \cref{assumption:sample-losses} that $\Exp[b_k]=0$,
$\Exp[\|b_k\|^2]<\infty$, $\Exp[J_k]=0$, and
$\Exp[\|J_k\|_F^q]<\infty$. Moreover, since both $b_k$ and $J_k$ are functions
of the same i.i.d.\ sample $\xi^k$, the sequence $\{(b_k,J_k)\}$ is
i.i.d. Thus, the hypotheses of
\cref{lem:iid-moment-bounds-imply-stochastic-oracle} are verified. Applying \cref{lem:iid-moment-bounds-imply-stochastic-oracle} provides
the desired probability-one event $\cE_{\rm reg}(\bar z)$ in
\cref{assumption:stochastic-oracle}.
\end{proof}

For the smooth objective in \eqref{eq:smd-objective}, we define the set of strict saddle points
\[
\cX^*:=\{x\in\cX:\nabla f(x)=0,\ \lambda_{\min}(\nabla^2f(x))<0\}.
\]

This is the same strict saddle notion used by Lee et al.
\cite{LeePPSJR2019}. For
mirror descent-type methods, the natural state is the mirror coordinate
$z=\nabla h(x)$, and the mean field is
$F(z)=\nabla f(\nabla h^*(z))$. Thus, we need to show that strict
saddles of the primal objective correspond to unstable zeros of this mean field. 

\begin{lemma}[Mirror map preserves strict saddles]
\label{lem:smd-strict-saddle-transfer}
Let $x^*\in\cX^*$ be given and set $z^*:=\nabla h(x^*)$. Then, $F(z^*)=0$, and ${\rm D}F(z^*)$ has a negative eigenvalue.
\end{lemma}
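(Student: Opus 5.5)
The plan is to use the conjugacy relations from \cref{subsec:smd-application} together with the chain rule formula \eqref{eq:smd-chain-rule-jacobians}. This reduces the statement to a linear-algebra fact: a matrix similar to a symmetric matrix, via congruence by a positive definite factor, has the same inertia.

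\emph{Step 1: $z^*$ is a zero of $F$.} Since $x^*\in\cX$ and $\nabla h^*=(\nabla h)^{-1}$, we have $\nabla h^*(z^*)=x^*$. Hence
\[
F(z^*)=\nabla f(\nabla h^*(z^*))=\nabla f(x^*)=0 .
\]

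\emph{Step 2: compute the Jacobian.} By \eqref{eq:smd-chain-rule-jacobians} and $\nabla^2h^*(z^*)=[\nabla^2h(x^*)]^{-1}$,
\[
{\rm D}F(z^*)=AP \qquad\text{with}\qquad A:=\nabla^2 f(x^*),\quad P:=[\nabla^2 h(x^*)]^{-1}.
\]
Here $A$ is symmetric with $\lambda_{\min}(A)<0$, and $P\succ0$ because $\nabla^2h(x^*)\succ0$ by \cref{assumption:smd-regularity}. Note that $f$ is $\cC^2$ near $x^*$, since Hessians pass through the expectation by \cref{assumption:sample-losses}, so $A$ is well defined.

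\emph{Step 3: transfer the negative eigenvalue.} Write $P=P^{1/2}P^{1/2}$. Then
\[
AP=P^{-1/2}\bigl(P^{1/2}AP^{1/2}\bigr)P^{1/2},
\]
so $AP$ is similar to the symmetric matrix $P^{1/2}AP^{1/2}$. In particular, all eigenvalues of ${\rm D}F(z^*)$ are real.

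By Sylvester's law of inertia, $P^{1/2}AP^{1/2}$ is congruent to $A$ and so has the same number of negative eigenvalues. Concretely, if $v$ is a unit eigenvector of $A$ with $v^\top Av<0$, set $w:=P^{-1/2}v$. Then $w^\top P^{1/2}AP^{1/2}w=v^\top Av<0$, and the Rayleigh quotient gives a negative eigenvalue.

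Therefore ${\rm D}F(z^*)$ has a negative (real) eigenvalue, so $z^*\in\cZ^*$. There is no real obstacle. The only point needing care is Step 3, because ${\rm D}F(z^*)$ is not symmetric. The similarity-to-symmetric argument handles this, and it also shows that the spectrum is real, which matches the ``minimum real part'' formulation in \eqref{eq:unstable-zero}.
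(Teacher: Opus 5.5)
Your proof is correct and follows essentially the same route as the paper: write ${\rm D}F(z^*)=\nabla^2 f(x^*)[\nabla^2 h(x^*)]^{-1}$, observe it is similar to the symmetric matrix $[\nabla^2 h(x^*)]^{-1/2}\nabla^2 f(x^*)[\nabla^2 h(x^*)]^{-1/2}$, and invoke Sylvester's law of inertia. Your explicit check that $F(z^*)=0$ and your Rayleigh-quotient verification add detail that the paper leaves implicit.
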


\begin{proof}
Let us set $x=\nabla h^*(z)$. Since $\nabla^2h^*(z)=[\nabla^2h(x)]^{-1}$, it holds that $
{\rm D}F(z)=\nabla^2 f(x)[\nabla^2h(x)]^{-1}$.
At the point $x^*$, the matrix ${\rm D}F(z^*)$ is similar to
\[
[\nabla^2h(x^*)]^{-\frac12}
\nabla^2f(x^*)
[\nabla^2h(x^*)]^{-\frac12}.
\]
By Sylvester's law of inertia \cite[Theorem 4.5.8]{HornJohnson2012}, this symmetric matrix has a negative eigenvalue whenever $\nabla^2f(x^*)$ does.
\end{proof}

\begin{corollary}[Stochastic mirror descent avoids strict saddles]
\label{cor:smd-avoids-strict-saddles}
Suppose
\cref{assumption:smd-regularity,assumption:sample-losses}
hold and the step-sizes satisfy
\[
\alpha_k\in(0,1/\sL), \qquad 
{\sum}_{k=0}^\infty \alpha_k=\infty, \qquad \text{and} \qquad {\sum}_{k=0}^{\infty}r_{k+1}^{q/2}<\infty \quad \text{with}\quad r_k:={\sum}_{i=k}^{\infty}\alpha_i^2.
\]
If $z^0=\nabla h(x^0)$ is drawn from any distribution with a density, then
\[
\Prob\,(
{\lim}_{k\to\infty}~x^k
\in \cX^*
)=0.
\]
\end{corollary}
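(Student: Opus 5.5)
The plan is to reduce the corollary to \cref{thm:avoid-strict-saddles} applied to the dual recursion \eqref{eq:smd-dual-fixed-point}, and then transfer the conclusion back to the primal iterates through the homeomorphism $\nabla h^*:\Rd\to\cX$.

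\textbf{Step 1: verify the hypotheses of \cref{thm:avoid-strict-saddles}.}
By \cref{prop:smd-sample-losses-imply-stochastic-oracle}, \cref{assumption:F,assumption:stochastic-oracle} hold for $G(\cdot;\xi)=\nabla f(\nabla h^*(\cdot);\xi)$ and $F=\nabla f\circ\nabla h^*$. The lipeomorphism threshold there is any $\tilde\alpha<1/\sL$. The step-sizes satisfy $\alpha_k\in(0,1/\sL)$, but the standing requirement $\alpha_k\le\tilde\alpha$ needs a uniform $\tilde\alpha<1/\sL$.

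I would obtain this from $\alpha_k\to0$. Summability of $r_{k+1}^{q/2}$ forces $r_k<\infty$, hence $\alpha_k\to0$. So only finitely many $\alpha_k$ exceed, say, $\tilde\alpha:=\frac{1}{2\sL}$. For those finitely many steps, each map $z\mapsto z-\alpha_kG(z;\xi)$ is still a $\cC^1$-diffeomorphism, since Part~I of that proof works for every $\alpha<1/\sL$. The proof of \cref{thm:avoid-strict-saddles} only uses that each $T_{\omega_\xi,k}$ has a locally Lipschitz inverse, so the argument goes through unchanged.

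The probabilistic setting of \cref{def:probability-model} holds with $\D$ the law of $z^0$, which has a density by assumption. The measurability of $(z,\xi)\mapsto G$ and of ${\rm D}G$ follows from composing $\nabla f(\cdot;\cdot)$ and $\nabla^2 f$ with the continuous maps $\nabla h^*$ and $\nabla^2h^*$. I take the joint measurability of $(x,\xi)\mapsto\nabla f(x;\xi)$ as implicit in \cref{assumption:sample-losses}.

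\textbf{Step 2: match primal saddles with unstable zeros.}
By \cref{lem:smd-strict-saddle-transfer}, $\nabla h(\cX^*)\subset\cZ^*$. The lemma gives a real negative eigenvalue of ${\rm D}F(z^*)$, which is in particular an eigenvalue with negative real part, so \eqref{eq:unstable-zero} holds.

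\textbf{Step 3: transfer the conclusion back to the primal iterates.}
The primal and dual iterates are linked by $x^k=\nabla h^*(z^k)$, and $\nabla h^*:\Rd\to\cX$ is a homeomorphism. Suppose $x^k\to x^*\in\cX^*$. Since $x^*\in\cX$, continuity of $\nabla h$ on $\cX$ gives $z^k=\nabla h(x^k)\to\nabla h(x^*)\in\cZ^*$. Hence
\[
\{\lim_k x^k\in\cX^*\}\subset\{\lim_k z^k\in\cZ^*\}.
\]
The right-hand event has probability zero by \cref{thm:avoid-strict-saddles}.

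\textbf{Main obstacle.}
The only delicate point is the step-size threshold in Step 1. The corollary allows $\alpha_k$ arbitrarily close to $1/\sL$, whereas the framework asks for a uniform $\tilde\alpha$. I would handle this by discarding finitely many initial steps, as described above. An alternative is to note that the tail-index shift in the proof of \cref{thm:local-stable-null} already discards early iterates.
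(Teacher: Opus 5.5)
Your proposal is correct and follows the paper's route. You invoke \cref{prop:smd-sample-losses-imply-stochastic-oracle}, match strict saddles with unstable zeros via \cref{lem:smd-strict-saddle-transfer}, apply \cref{thm:avoid-strict-saddles} to the dual recursion, and pull back through the continuity of $\nabla h$ on $\cX$. The step-size ``obstacle'' is simpler than you make it: a positive sequence with $\alpha_k\to0$ attains its supremum, so $\sup_k\alpha_k<1/\sL$, and you can take $\tilde\alpha:=\sup_k\alpha_k$ directly, as the paper does, without adjusting the proof of the theorem.
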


\begin{proof}
The mean field $F(z)=\nabla f(\nabla h^*(z))$ is $\cC^1$ on $\Rd$. By
\cref{prop:smd-sample-losses-imply-stochastic-oracle}, the mirror-coordinate oracle satisfies the equicontinuity and lipeomorphism conditions in
\cref{assumption:F}, as well as
\cref{assumption:stochastic-oracle}.
Since
$\alpha_k<1/\sL$ for all $k$, choose
$\tilde\alpha$ with $\sup_k\alpha_k\le\tilde\alpha<1/\sL$.
By \cref{lem:smd-strict-saddle-transfer}, strict saddles of $f$ correspond under
$z=\nabla h(x)$ to unstable zeros of $F$. Applying
\cref{thm:avoid-strict-saddles} to the recursion \eqref{eq:smd-dual-fixed-point} gives almost
sure avoidance in the dual space. Since $\nabla h$ and $\nabla h^*$ are inverse
maps, convergence
of $\{x^k\}_k$ to a strict saddle is equivalent to convergence of $\{z^k\}_k$ to the
corresponding unstable zero.
\end{proof}

\subsection{Normal map-based proximal stochastic gradient method}
\label{subsec:nsgd-application}
 
This subsection establishes avoidance results and convergence guarantees to a local minimizer for a normal map-based stochastic proximal-type method. For stochastic proximal methods, general convergence theory often only gives
subsequential stationarity rather than convergence of the whole sequence
\cite{LiMil22}. Full convergence can be obtained by imposing additional stringent conditions such as isolatedness of the stationary points, or by passing to a generic linear tilt of the objective \cite{DavisDrusvyatskiyJiang2025}. These routes do not directly answer whether the underlying algorithm --- applied to the original objective --- converges (to a local minimizer).

In this part, we investigate a normal map-based proximal stochastic gradient method. In contrast to the standard stochastic proximal gradient method, it comes with convergence and identification guarantees \cite{qiu2025normal}. Moreover, the concept of ``active strict saddles'' introduced in \cite{DavisDrusvyatskiy2022} is applicable to this methodology. Our goal is to provide a strict saddle avoidance result that does not require unit excitation, does not pass to a tilted objective, and does not impose isolatedness conditions. Combined with the Kurdyka-{\L}ojasiewicz-based iterate convergence theory developed in \cite{qiu2025normal}, this yields convergence to local minimizers of the original composite objective under the active strict saddle property.

We now consider composite-type problems of the form
\begin{equation} \label{eq:composite} {\min}_{x\in\Rd}\;\psi(x):=f(x)+\vp(x).\end{equation}
We make the following assumptions on the objective function $\psi$. 
\begin{assumption}
\label{assumption:normal-map-regularity}
The function $f:\Rd\to\R$ is $\cC^2$, and $\nabla f$ is $\sL$-Lipschitz on $\Rd$. The function $\vp:\Rd\to(-\infty,\infty]$ is proper, closed, and convex. Moreover, each sample loss has $\sL$-Lipschitz gradient:
\[
\|\nabla f(x;\xi)-\nabla f(y;\xi)\|
\le
\sL\|x-y\|
\qquad
\forall\,x,y\in\Rd,\quad \forall\,\xi\in\Xi.
\]
\end{assumption}

Lipschitz smoothness of $f$ is a standard assumption for composite problems and proximal methods; see, e.g., \cite{AttBolSva13,DavisDrusvyatskiy2022,qiu2025normal}. Fix $\lambda>0$ and let $\prox{\lambda\vp}(z) := \argmin_{y \in \Rd}\,\vp(y)+\frac{1}{2\lambda}\|z-y\|^2$ denote the standard proximity operator of $\vp$. We then define the so-called normal map, \cite{robinson1992normal}, associated with problem \eqref{eq:composite} via
\[
F_\lambda(z):=\nabla f(\prox{\lambda\vp}(z))+\lambda^{-1}(z-\prox{\lambda\vp}(z)).
\]
In the following, $\partial \psi = \nabla f + \partial \vp$ denotes the limiting subdifferential of $\psi$, and $\partial\vp$ denotes the standard subdifferential for convex functions, cf.\ \cite{rocwet98}. A point $\bar x \in \mathrm{dom}(\vp) := \{x: \vp(x) < \infty\}$ is a stationary point of \eqref{eq:composite} if $0 \in \partial \psi(\bar x)$. The normal map is a valid stationarity measure for the composite problem \eqref{eq:composite} in the following sense: If $F_\lambda(\bar z) = 0$, then $0 \in \partial \psi(\bar x)$ where $\bar x := \prox{\lambda\vp}(\bar z)$, i.e., $\bar x$ is a stationary point of $\psi$. Conversely, if $0 \in \partial \psi(\bar x)$, then $\bar z := \bar x - \lambda \nabla f(\bar x)$ satisfies $F_\lambda(\bar z) = 0$ (for all $\lambda > 0$), cf.\ \cite{ouyang2021trust}.

In \cref{alg:nsgd-iteration}, we provide an overview of the normal map-based proximal stochastic gradient method proposed in \cite{qiu2025normal}. The key update of \cref{alg:nsgd-iteration} can be represented as follows
\begin{equation}\label{eq:nsgd-normal-map}
z^{k+1}
=
z^k-\alpha_kG_\lambda(z^k;\xi^k)
\quad \text{and} \quad
G_\lambda(z;\xi):=\nabla f(\prox{\lambda\vp}(z);\xi)+\lambda^{-1}(z-\prox{\lambda\vp}(z)),
\end{equation}
and $x^k:=\prox{\lambda\vp}(z^k)$. In particular, the step \eqref{eq:nsgd-normal-map} corresponds to \eqref{eq:stochastic-fixed-point-method} with $F=F_\lambda$.

\begin{algorithm}[t]
\caption{Normal map-based proximal stochastic gradient}
\label{alg:nsgd-iteration}
\begin{algorithmic}
    \Require Choose initial points $z^0\in\Rd$ and $x^0=\prox{\lambda\vp}(z^0)$, $\lambda>0$, and
    step-sizes $\{\alpha_k\}_{k\ge0}$.
    \For{$k=0,1,2,\ldots$}
        \State Draw a sample $\xi^k$ and set
\[ z^{k+1} = z^k-\alpha_k [ \nabla f(x^k;\xi^k)+\lambda^{-1}(z^k-x^k)] \quad \text{and} \quad x^{k+1} = \prox{\lambda\vp}(z^{k+1}).
        \]
    \EndFor
\end{algorithmic}
\end{algorithm}

\begin{lemma}[Normal map properties and update lipeomorphism]
\label{lem:normal-map-bounds}
\label{lem:nsgd-sample-map-lipeomorphism}
Suppose \cref{assumption:normal-map-regularity} holds. Then
$F_\lambda$ is Lipschitz with modulus $\sL+\lambda^{-1}$, and
\[
\dist(0,\partial\psi(\prox{\lambda\vp}(z)))\le \|F_\lambda(z)\|
\qquad\forall\,z\in\Rd.
\]
Moreover, for every
$\xi\in\Xi$ and every $0<\alpha<(\sL+\lambda^{-1})^{-1}$, the map $z \mapsto T_{\alpha,\xi}(z):=z-\alpha G_\lambda(z;\xi)$ has a well-defined inverse on $\Rd$, and both $T_{\alpha,\xi}$ and $T_{\alpha,\xi}^{-1}$ are globally Lipschitz. 
\end{lemma}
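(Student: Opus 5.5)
We split the statement into three parts: the Lipschitz bound on $F_\lambda$, the stationarity estimate, and the lipeomorphism property of $T_{\alpha,\xi}$.

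\textbf{Lipschitz bound.} Write $P:=\prox{\lambda\vp}$, which is firmly nonexpansive since $\vp$ is closed, proper and convex. Then $I-P$ is also firmly nonexpansive, and $\|P(z)-P(w)\|^2+\|(I-P)(z)-(I-P)(w)\|^2\le\|z-w\|^2$; in particular, both $P$ and $I-P$ are $1$-Lipschitz. Since $\nabla f$ is $\sL$-Lipschitz, we obtain
\[
\|F_\lambda(z)-F_\lambda(w)\|\le \sL\|P(z)-P(w)\|+\lambda^{-1}\|(I-P)(z)-(I-P)(w)\|\le(\sL+\lambda^{-1})\|z-w\|.
\]
The same argument applies verbatim to $G_\lambda(\cdot;\xi)$, because each $\nabla f(\cdot;\xi)$ is $\sL$-Lipschitz. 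So $G_\lambda(\cdot;\xi)$ is $(\sL+\lambda^{-1})$-Lipschitz, uniformly in $\xi$.

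\textbf{Stationarity estimate.} Set $x:=P(z)$. The prox optimality condition gives $\lambda^{-1}(z-x)\in\partial\vp(x)$. Hence $F_\lambda(z)=\nabla f(x)+\lambda^{-1}(z-x)\in\nabla f(x)+\partial\vp(x)=\partial\psi(x)$, which yields $\dist(0,\partial\psi(x))\le\|F_\lambda(z)\|$.

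\textbf{Lipeomorphism.} Fix $\xi$ and $0<\alpha<(\sL+\lambda^{-1})^{-1}$, and set $\beta:=\alpha(\sL+\lambda^{-1})<1$. The map $T_{\alpha,\xi}=I-\alpha G_\lambda(\cdot;\xi)$ is $(1+\beta)$-Lipschitz.

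For invertibility, given $y\in\Rd$, the equation $T_{\alpha,\xi}(z)=y$ is equivalent to the fixed-point equation $z=y+\alpha G_\lambda(z;\xi)$. The right-hand side is a $\beta$-contraction, so Banach's fixed point theorem gives a unique solution. Thus $T_{\alpha,\xi}$ is a bijection of $\Rd$.

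For the inverse bound, the reverse triangle inequality gives
\[
\|T_{\alpha,\xi}(z)-T_{\alpha,\xi}(w)\|\ge\|z-w\|-\alpha\|G_\lambda(z;\xi)-G_\lambda(w;\xi)\|\ge(1-\beta)\|z-w\|.
\]
Hence $T_{\alpha,\xi}^{-1}$ is $(1-\beta)^{-1}$-Lipschitz.

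None of these steps poses a real obstacle. The only point needing care is that $I-P$ is nonexpansive, and this follows from firm nonexpansiveness of the proximal map, i.e., Moreau's decomposition.
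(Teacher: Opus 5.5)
Your proposal is correct and follows essentially the same route as the paper: firm nonexpansiveness of $\prox{\lambda\vp}$ gives the $(\sL+\lambda^{-1})$-Lipschitz bound for both $F_\lambda$ and $G_\lambda(\cdot;\xi)$, and the prox optimality condition yields $F_\lambda(z)\in\partial\psi(\prox{\lambda\vp}(z))$. For the lipeomorphism, the paper cites its appendix lemma (\cref{lem:invertible}), whose proof is exactly your Banach fixed-point plus reverse-triangle-inequality argument, so you have simply inlined that step.
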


\begin{proof}
The proximity operator $\prox{\lambda\vp}$ is firmly nonexpansive. Hence, both $\prox{\lambda\vp}$
and $I-\prox{\lambda\vp}$ are nonexpansive \cite{bauschke2011convex}. Therefore, for
all $z,w\in\Rd$, we have
\[
\|F_\lambda(z)-F_\lambda(w)\|
\le
(\sL+\lambda^{-1})\|z-w\|.
\]
If $x=\prox{\lambda\vp}(z)$, then $\lambda^{-1}(z-x)\in\partial\vp(x)$. Hence
$F_\lambda(z)\in\nabla f(x)+\partial\vp(x)=\partial\psi(x)$, which gives the
distance bound. Similarly, for all $z,w\in\Rd$ and $\xi\in\Xi$, we obtain
\[
\begin{aligned}
\|G_\lambda(z;\xi)-G_\lambda(w;\xi)\|
&\le
\|\nabla f(\prox{\lambda\vp}(z);\xi)-\nabla f(\prox{\lambda\vp}(w);\xi)\|
\\ & \hspace{4ex}+\lambda^{-1}\|(I-\prox{\lambda\vp})z-(I-\prox{\lambda\vp})w\| \le
(\sL+\lambda^{-1})\|z-w\|.
\end{aligned}
\]
Thus $G_\lambda(\cdot;\xi)$ is $(\sL+\lambda^{-1})$-Lipschitz, and
\cref{lem:invertible} proves the claimed lipeomorphism property.
\end{proof}

Based on \cite{DavisDrusvyatskiy2022}, we now introduce the notion of active strict saddle points which will be nonsmooth analogues of the strict saddle points considered earlier in the smooth setting. Due to the inherent nonsmoothness of the model \eqref{eq:composite}, we will use second subderivatives to substitute Hessian information. The second subderivative
$\mathrm{d}^2\psi(x|v)(h)$ of $\psi$ at $x$ relative to $v$ in the direction $h \in \Rd$ is given by
\[ \mathrm{d}^2\psi(x|v)(h) := \liminf_{t \downarrow 0, \tilde h \to h}\, \Delta^2_t\!\;\psi(x|v)(\tilde h), \quad \Delta^2_t\!\;\psi(x|v)(h) := \frac{\psi(x+th)-\psi(x)-t \cdot \iprod{v}{h}}{\frac12 t^2}. \]
Using the $\mathcal C^2$-smoothness of $f$, we may also compute $\mathrm{d}^2\psi(x|v)(h) = \mathrm{d}^2\vp(x|v-\nabla f(x))(h) + \langle{h},{\nabla^2 f(x)h}\rangle$, $h \in \Rd$. We refer to \cite[Chapter 13]{rocwet98} for more details on the concept of second subderivatives. 

\begin{definition}[Active strict saddles]\label{def:active strict saddle}
A set $\cM\subset\Rd$ is an \emph{active manifold} for $\psi$ at $\bar x$ if, in a neighborhood $\cU$ of $\bar x$, the set $\cM\cap \cU$ is a $\cC^2$-manifold, $\psi|_{\cM\cap \cU}$ is $\cC^2$, and ${\inf}_{x\in \cU\setminus\cM}~\dist(0,\partial\psi(x)) >0$. The point $\bar x$ is an \emph{active strict saddle} if $0 \in \partial\psi(\bar x)$, it admits an active manifold $\cM$, and
\[ \min_{\|h\|=1}~{\mathrm{d}}^2\psi(\bar x|0)(h) < 0. \] 
We use $\cX^*$ to denote the set of all active strict saddles of $\psi$. We say that $\psi$ satisfies the \emph{active strict saddle property}, if every stationary point of $\psi$ is either a local minimizer or an active strict saddle.
\end{definition}

The active manifold is the smooth surface identified by the problem (near a point of interest). On such a surface, the restriction of $\psi$ is $\cC^2$. The sharpness condition $\inf_{x\in \cU\setminus\cM}\dist(0,\partial\psi(x))>0$ says that nearly stationary points must lie on the active manifold. Thus, an active strict saddle is a critical point with a negative tangential curvature direction. When $\cM=\Rd$, this reduces to the usual smooth strict saddle condition. The notion of active strict saddles was introduced by Davis and Drusvyatskiy in \cite{DavisDrusvyatskiy2022} to establish avoidance results in the nonsmooth setting. Active manifolds are also known as identifiable manifolds and have strong connections to identifiability and partial smoothness \cite{lewis2002active,hare2004identifying,drusvyatskiy2014optimality,lewis2025identify}.

\begin{lemma}[Derivative of the proximal map]
\label{lem:active-prox-derivative}
Let \cref{assumption:normal-map-regularity} hold. Fix $z^*\in\Rd$ with $F_\lambda(z^*)=0$, and set $x^*:=\prox{\lambda\vp}(z^*)$. Suppose that $\cM$ is an active manifold for $\psi$ at $x^*$. Then $\prox{\lambda\vp}$ is $\cC^1$ near $z^*$ and there are a linear subspace $S \subset \Rd$ and a positive semidefinite, symmetric matrix $Q \in \R^{d \times d}$ with $\mathrm{range}(Q) \subset S$ such that for all $h \in \Rd$, we have
\begin{equation}\label{eq:active-prox-derivative}
{\mathrm{d}}^2\psi(x^*|0)(h) = \iprod{h}{[\nabla^2 f(x^*)+Q]h} + \iota_S(h) \quad \text{and} \quad {\rm D}\prox{\lambda\vp}(z^*)h =
 P_S(I+\lambda Q)^{-1}P_Sh.
\end{equation}
Here, $P_S$ and $\iota_S$ denote the orthogonal projector onto $S$ and the indicator function of $S$, respectively.
\end{lemma}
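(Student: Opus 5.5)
The plan follows the approach of Davis and Drusvyatskiy \cite{DavisDrusvyatskiy2022}: first use the active manifold to reduce $\vp$ near $x^*$ to a function that is $\cC^2$ along a manifold, and then read off both the second subderivative and the derivative of the prox from a single quadratic model. Set $v^*:=\lambda^{-1}(z^*-x^*)$. From $F_\lambda(z^*)=0$ we get $v^*=-\nabla f(x^*)\in\partial\vp(x^*)$, so $0\in\partial\psi(x^*)$.

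\emph{Step 1: localizing $\vp$.} Since $\psi|_{\cM}$ and $f$ are $\cC^2$ near $x^*$, the restriction $\vp|_{\cM}$ is $\cC^2$ near $x^*$. The sharpness condition $\inf_{x\in\cU\setminus\cM}\dist(0,\partial\psi(x))>0$ transfers to $\vp$ with the shifted subgradient $v^*$. This holds because $\partial\psi=\nabla f+\partial\vp$ and $\nabla f$ is continuous, so $\dist(v^*,\partial\vp(x))$ is bounded away from zero for $x\in\cU\setminus\cM$ near $x^*$. Thus $\cM$ is an identifiable (partly smooth) manifold for the convex function $\vp$ at $x^*$ relative to $v^*$.

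\emph{Step 2: $\cC^1$-smoothness of the prox.} We use the identification argument. For $z$ near $z^*$, the point $x=\prox{\lambda\vp}(z)$ satisfies $\lambda^{-1}(z-x)\in\partial\vp(x)$, and this subgradient lies close to $v^*$ by nonexpansiveness. By Step 1, $x\in\cM$. Hence, locally, $\prox{\lambda\vp}$ coincides with the prox of $\vp+\iota_{\cM}$. We then write $\cM$ as $\{x:\Psi(x)=0\}$ with a $\cC^2$ map $\Psi$ of full-rank derivative, and pose the prox problem as the equality-constrained problem
\[
\min\; \tilde\vp(x)+\tfrac{1}{2\lambda}\|x-z\|^2 \quad\text{s.t.}\quad \Psi(x)=0,
\]
where $\tilde\vp$ is a $\cC^2$ extension of $\vp|_{\cM}$. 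Its KKT system is $\cC^1$ in $(x,\mu,z)$. The implicit function theorem then applies, provided the reduced Hessian is positive definite on $S:=T_{\cM}(x^*)$.

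\emph{Step 3: the main obstacle, positive definiteness and computing $Q$.} Let $Q$ be the Hessian at $x^*$ of the Lagrangian of $\vp$ restricted to $\cM$, compressed to $S$; that is, $Q=P_S\bigl(\nabla^2\tilde\vp(x^*)+\sum_i\mu_i\nabla^2\Psi_i(x^*)\bigr)P_S$, with the multipliers fixed by $v^*$. By construction $Q$ is symmetric with $\mathrm{range}(Q)\subset S$. Showing $Q\succeq0$ is the delicate part. I would argue that $\vp$ is convex and that, for $h\in S$, $\langle h,Qh\rangle$ equals the second subderivative $\mathrm{d}^2\vp(x^*|v^*)(h)$, which is nonnegative for convex functions. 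The identity $\langle h,Qh\rangle=\mathrm{d}^2\vp(x^*|v^*)(h)$ for $h\in S$ comes from taking second-order curves in $\cM$. For $h\notin S$, I would show that $\mathrm{d}^2\vp(x^*|v^*)(h)=+\infty$. This follows from sharpness (a quadratic-growth argument off the manifold, as in the theory of partial smoothness of Lewis and Hare). Together these give $\mathrm{d}^2\vp(x^*|v^*)=\langle\cdot,Q\,\cdot\rangle+\iota_S$. Adding $\langle h,\nabla^2 f(x^*)h\rangle$ via the sum rule stated just before the definition then yields the first identity in \eqref{eq:active-prox-derivative}.

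\emph{Step 4: the derivative formula.} Because $Q\succeq0$, the reduced KKT Hessian $Q+\lambda^{-1}I$ is positive definite on $S$, which confirms the assumption made in Step 2. We differentiate the KKT system at $z^*$ in a direction $h$. The derivative $\dot x$ must lie in $S$ and satisfies $P_S(\lambda Q\dot x+\dot x-h)=0$. Since $\mathrm{range}(Q)\subset S$, we have $\dot x=P_S(I+\lambda Q)^{-1}P_Sh$, because $I+\lambda Q$ leaves $S$ invariant and is invertible there. This gives the second identity in \eqref{eq:active-prox-derivative}.
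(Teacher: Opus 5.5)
\paragraph{Assessment.}
Your KKT/implicit-function-theorem route to the $\cC^1$ property and to ${\rm D}\prox{\lambda\vp}(z^*)=P_S(I+\lambda Q)^{-1}P_S$ is sound. It is more self-contained than the paper, which gets both by going through partial smoothness (Drusvyatskiy--Lewis), Daniilidis--Hare--Malick, Poliquin--Rockafellar and Rockafellar--Wets, Exercise~13.45. It also produces $Q$ explicitly as the compressed Lagrangian Hessian. Note that $Q\succeq0$ only needs $\langle h,Qh\rangle\ge\mathrm{d}^2\vp(x^*|v^*)(h)\ge0$ on $S$, which your curve argument plus convexity does give. So Steps~2 and~4 stand.

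\paragraph{The gap: the first identity.}
The gap is the first identity in \eqref{eq:active-prox-derivative}. A second-order curve in $\cM$ is only one admissible sequence in the $\liminf$ defining $\mathrm{d}^2\vp(x^*|v^*)(h)$. It therefore yields only the \emph{upper} bound $\mathrm{d}^2\vp(x^*|v^*)\le\langle\cdot,Q\,\cdot\rangle+\iota_S$. The reverse inequality on $S$, and the value $+\infty$ off $S$, require a lower estimate of $\vp(x^*+th')-\vp(x^*)-t\langle v^*,h'\rangle$ along sequences that leave $\cM$. You do not supply one.
\begin{itemize}
\item ``Quadratic growth off the manifold'' would not even give $+\infty$. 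What is needed is linear growth in normal directions: every $h\notin S$ must be non-critical, $\mathrm{d}\vp(x^*)(h)>\langle v^*,h\rangle$. This amounts to $v^*\in\mathrm{ri}\,\partial\vp(x^*)$ with $\mathrm{par}\,\partial\vp(x^*)=S^\perp$.
\item The parenthetical ``identifiable, hence partly smooth'' in Step~1 is exactly the nontrivial Drusvyatskiy--Lewis result the paper cites. Step~1 does not establish it.
\end{itemize}
This matters because the missing direction is the one used downstream. \cref{prop:normal-map-active-saddle-unstable} must pass from $\mathrm{d}^2\psi(x^*|0)(h)<0$ to $h\in S$ and $\langle h,[\nabla^2 f(x^*)+Q]h\rangle<0$.

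\paragraph{How to close it.}
You can close this in either of two ways.

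\emph{As the paper does.} Once $\prox{\lambda\vp}$ is differentiable at $z^*$, Poliquin--Rockafellar together with Rockafellar--Wets, Exercise~13.45, give two facts: $\mathrm{d}^2\vp(x^*|v^*)=\langle\cdot,Q'\,\cdot\rangle+\iota_{S'}$ is generalized quadratic, and ${\rm D}\prox{\lambda\vp}(z^*)=P_{S'}(I+\lambda Q')^{-1}P_{S'}$. Comparing with your formula forces $S'=S$ and $Q'=Q$, after normalizing $\mathrm{range}(Q')\subset S'$.

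\emph{Staying inside your framework.} Let $\mu^*$ be the multiplier fixed by $v^*$.
\begin{itemize}
\item For $p\in\cM$ near $x^*$ and small $\nu$, the pair $(p,\mu^*+\nu)$ solves your KKT system at $z=p+\lambda\bigl(\nabla\tilde\vp(p)+{\rm D}\Psi(p)^\top(\mu^*+\nu)\bigr)$.
\item Local uniqueness then gives $\prox{\lambda\vp}(z)=p$. Hence $\partial\vp(p)\supset w_p+\bigl(N_\cM(p)\cap B_r(0)\bigr)$, with $w_p:=\nabla\tilde\vp(p)+{\rm D}\Psi(p)^\top\mu^*\to v^*$ and $r>0$ uniform.
\item Convexity then gives $\vp(x)-\langle v^*,x\rangle\ge\vp(p)-\langle v^*,p\rangle+\bigl(r-\|w_p-v^*\|\bigr)\dist(x,\cM)$, where $p$ is the nearest point of $\cM$ to $x$.
\item When $h'\to h\in S$ you have $\dist(x^*+th',\cM)=o(t)$, which delivers the lower bound $\langle h,Qh\rangle$ on $S$. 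When $h'\to h\notin S$ you have $\dist(x^*+th',\cM)\ge ct$, which delivers the value $+\infty$ off $S$.
\end{itemize}
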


\begin{proof} By \cref{lem:normal-map-bounds}, $x^*$ is a stationary point of $\psi$. The active manifold $\mathcal M$ is identifiable at $x^*$ for $0 \in \partial \psi(x^*)$, \cite{drusvyatskiy2014optimality,DavisDrusvyatskiy2022}. Thus, applying \cite[Proposition 10.12]{drusvyatskiy2014optimality} or \cite[Theorem 8.5]{lewis2025identify}, we can infer that $\psi$ is $\mathcal C^2$-partly smooth at $x^*$ for zero relative to $\mathcal M$ with $0 \in \mathrm{ri}(\partial\psi(x^*))$. In particular, $\vp$ is $\mathcal C^2$-partly smooth at $x^*$ for $-\nabla f(x^*)$ relative to $\mathcal M$ with $-\nabla f(x^*) \in \mathrm{ri}(\partial\vp(x^*))$, \cite[Corollary 4.7]{lewis2002active}.
Hence, by \cite[Theorem 28]{DanHarMal06}, the proximal mapping $\prox{\lambda\vp}$ is $\mathcal C^1$ in a neighborhood of $x^*-\lambda\nabla f(x^*) = z^*-\lambda F_\lambda(z^*) = z^*$; see also \cite[Lemma 4.3]{HuTiaPanWen23} or \cite[Corollary 3.19]{hang2026partly}. Applying a mild and straightforward generalization of \cite[Theorems 3.8 and 3.9]{poliquin1996generalized}, differentiability of $\prox{\lambda\vp}$ implies (among various other variational properties) that the second subderivative $\mathrm{d}^2\vp(x^*|-\nabla f(x^*))$ is generalized quadratic, i.e., 
\[ \mathrm{d}^2\vp(x^*|-\nabla f(x^*))(h) = \iprod{h}{Qh} + \iota_S(h) \quad \forall\,h \in \Rd, \]
for some linear subspace $S \subset \Rd$ and a symmetric matrix $Q \in \R^{d \times d}$; let us also refer to \cite[Proposition 2.2]{ouyang2024variational} for a complementary derivation. Without loss of generality, we may assume $\mathrm{range}(Q) \subset S$ (this can always be achieved by substituting $Q$ with $P_S Q P_S$). Moreover, using the convexity of $\vp$ and \cite[Theorem 13.20]{rocwet98}, it holds that $Q \succeq 0$.
Following the proof of \cite[Proposition 2.3]{ouyang2024variational} and combining \cite[Exercises 13.18, 13.35, and 13.45]{rocwet98}, we obtain
\[ {\rm D}\prox{\lambda\vp}(z^*)h=\argmin_{s\in\Rd}~\frac{1}{2}\mathrm{d}^2\vp(x^*|-\nabla f(x^*))(s)+\frac{1}{2\lambda}\|s-h\|^2=P_{S}(I+\lambda Q)^{-1}P_{S}h. \]
This finishes the proof.
\end{proof}

The subspace $S$ in \Cref{lem:active-prox-derivative} can be shown to coincide with the tangent space $T_{\cM}(x^*)$. Furthermore, using a local representation $\mathcal M \cap \cU = \{x \in \cU : c(x) = 0\}$, $Q$ can be connected to the Hessian of a Lagrangian involving $\vp|_{\cM\cap \cU}$ and the constraints $c(x) = 0$. We refer to \cite{hang2026partly} for additional details. 

\begin{lemma}[Stochastic oracle conditions]
\label{lem:nsgd-sample-losses-imply-stochastic-oracle}
Let \cref{assumption:normal-map-regularity,assumption:sample-losses}
(with $\cX=\Rd$) hold and assume that
\[
{\sum}_{k=0}^{\infty}\, r_{k+1}^{q/2}<\infty
\qquad \text{where}\qquad
r_k:={\sum}_{i=k}^{\infty}\alpha_i^2.
\]
Fix $\bar z\in\Rd$ with $F_\lambda(\bar z)=0$, set
$\bar x:=\prox{\lambda\vp}(\bar z)$, and suppose that $\bar x$ admits an active manifold $\cM$ for $\psi$. Then 
the local equicontinuity condition in \cref{assumption:F} holds at $\bar z$, and
\cref{assumption:stochastic-oracle} is satisfied at $\bar z$ for the pair
$(F_\lambda,G_\lambda)$.
\end{lemma}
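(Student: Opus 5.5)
The plan is to reduce both claims to the smoothness of the proximal map near $\bar z$ and then invoke \cref{lem:iid-moment-bounds-imply-stochastic-oracle}. By \cref{lem:active-prox-derivative} (applied with $z^*=\bar z$, which is legitimate since $F_\lambda(\bar z)=0$ and $\bar x$ admits an active manifold), $P_\lambda:=\prox{\lambda\vp}$ is $\cC^1$ on some open ball $\cU_{\bar z}:=B_r(\bar z)$. Shrinking $r$, I will take $\cU_{\bar z}$ convex and relatively compact in the region where $P_\lambda$ is $\cC^1$, so that ${\rm D}P_\lambda$ is uniformly continuous and bounded on compact subsets of $\cU_{\bar z}$; moreover $\|{\rm D}P_\lambda\|\le1$ by nonexpansiveness. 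On $\cU_{\bar z}$ the chain rule then gives
\[
{\rm D}G_\lambda(z;\xi)=\nabla^2 f(P_\lambda(z);\xi)\,{\rm D}P_\lambda(z)+\lambda^{-1}\bigl(I-{\rm D}P_\lambda(z)\bigr),
\]
and the analogous formula holds for ${\rm D}F_\lambda$ with $\nabla^2 f$ in place of $\nabla^2 f(\cdot;\xi)$.

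\textbf{Equicontinuity.} Fix a compact $\mathcal Q\subset\cU_{\bar z}$. The image $P_\lambda(\mathcal Q)$ is compact. By \cref{assumption:normal-map-regularity}, $\|\nabla^2 f(x;\xi)\|\le\sL$ for all $x,\xi$, and by \cref{assumption:sample-losses}, $\nabla^2 f(\cdot;\xi)$ is $L_{\mathcal Q'}$-Lipschitz on a compact $\mathcal Q'\supset P_\lambda(\mathcal Q)$, uniformly in $\xi$. Adding and subtracting terms as in Part~I of \cref{prop:smd-sample-losses-imply-stochastic-oracle} gives
\[
\|{\rm D}G_\lambda(z;\xi)-{\rm D}G_\lambda(w;\xi)\|\le L_{\mathcal Q'}\|z-w\|+(\sL+\lambda^{-1})\|{\rm D}P_\lambda(z)-{\rm D}P_\lambda(w)\|,
\]
where I used $\|P_\lambda(z)-P_\lambda(w)\|\le\|z-w\|$ and $\|{\rm D}P_\lambda\|\le1$. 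The right-hand side does not depend on $\xi$, and ${\rm D}P_\lambda$ is uniformly continuous on $\mathcal Q$, so the family $\{{\rm D}G_\lambda(\cdot;\xi)\}_\xi$ is equicontinuous on $\mathcal Q$. The same formula shows that $F_\lambda$ is $\cC^1$ near $\bar z$, which is the regularity needed there.

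\textbf{Oracle condition.} At $\bar z$ we have
\[
b_k=\nabla f(\bar x;\xi^k)-\nabla f(\bar x),\qquad J_k=\bigl(\nabla^2 f(\bar x;\xi^k)-\nabla^2 f(\bar x)\bigr){\rm D}P_\lambda(\bar z),
\]
because the $\lambda^{-1}$ terms cancel. Both are measurable functions of the single i.i.d.\ sample $\xi^k$, so $\{(b_k,J_k)\}_k$ is i.i.d. \Cref{assumption:sample-losses} gives $\Exp[b_k]=0$, $\Exp[\|b_k\|^2]<\infty$, and $\Exp[J_k]=0$; for the last one, the fixed matrix ${\rm D}P_\lambda(\bar z)$ factors out of the expectation. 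The Frobenius moment satisfies
\[
\Exp\|J_k\|_F^q\le d^{q/2}\,\|{\rm D}P_\lambda(\bar z)\|^q\,\Exp\|\nabla^2 f(\bar x;\xi^k)-\nabla^2 f(\bar x)\|^q<\infty.
\]
Together with the step-size summability hypothesis, \cref{lem:iid-moment-bounds-imply-stochastic-oracle} yields the event $\cE_{\rm reg}(\bar z)$.

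\textbf{Main obstacle.} The only delicate point is that $P_\lambda$ is merely $\cC^1$ near $\bar z$, not $\cC^{1,1}$. Equicontinuity therefore cannot come from a Lipschitz bound on ${\rm D}P_\lambda$; it has to come from the uniform continuity of ${\rm D}P_\lambda$ on compacts, combined with the $\xi$-uniform bound $\|\nabla^2 f(\cdot;\xi)\|\le\sL$. This is precisely why I route the estimate through the global gradient Lipschitz constant rather than a local Hessian-Lipschitz bound. A secondary issue is measurability of ${\rm D}G_\lambda$ on $\cU_{\bar z}\times\Xi$. This follows from the explicit chain-rule formula, since it is a composition of the measurable map $(x,\xi)\mapsto\nabla^2 f(x;\xi)$ (a pointwise limit of difference quotients of $\nabla f$) with the continuous maps $P_\lambda$ and ${\rm D}P_\lambda$.
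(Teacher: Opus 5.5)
Your proposal is correct and follows the paper's proof. You use \cref{lem:active-prox-derivative} to get $\cC^1$ smoothness of $\prox{\lambda\vp}$ near $\bar z$, and you obtain equicontinuity from the chain rule combined with the local Hessian Lipschitz bound, the $\xi$-uniform bound $\|\nabla^2 f(\cdot;\xi)\|\le\sL$, and the continuity of ${\rm D}\prox{\lambda\vp}$; you then verify the hypotheses of \cref{lem:iid-moment-bounds-imply-stochastic-oracle} with the same $b_k$ and $J_k$, and in doing so you spell out details the paper leaves implicit, such as the explicit Lipschitz-type estimate and the conversion from the operator-norm moment to the Frobenius-norm moment.
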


\begin{proof}
By \cref{lem:active-prox-derivative}, $\prox{\lambda\vp}$ is $\cC^1$ near
$\bar z$. Hence, there is a neighborhood $\cU_{\bar z}$ on which every $G_\lambda(\cdot;\xi)$ is differentiable. The chain rule, the local Lipschitz continuity of the Hessian in \cref{assumption:sample-losses}, the uniform Hessian bound in \cref{assumption:normal-map-regularity}, and the continuity of ${\rm D}\prox{\lambda\vp}$ imply that $\{{\rm D}G_\lambda(\cdot;\xi)\}_{\xi\in\Xi}$ is equicontinuous on compact subsets of $\cU_{\bar z}$. Thus, the local equicontinuity condition holds.
Set
\[
b_k:=G_\lambda(\bar z;\xi^k)-F_\lambda(\bar z)
=
\nabla f(\bar x;\xi^k)-\nabla f(\bar x).
\]
The chain rule gives $
{\rm D}G_\lambda(\bar z;\xi)
=
\nabla^2 f(\bar x;\xi){\rm D}\prox{\lambda\vp}(\bar z)
+\lambda^{-1}(I-{\rm D}\prox{\lambda\vp}(\bar z))$,
and similarly, $
{\rm D}F_\lambda(\bar z)
=
\nabla^2 f(\bar x){\rm D}\prox{\lambda\vp}(\bar z)
+\lambda^{-1}(I-{\rm D}\prox{\lambda\vp}(\bar z))$. 
Thus, 
\[
J_k:={\rm D}G_\lambda(\bar z;\xi^k)-{\rm D}F_\lambda(\bar z)
=
\bigl(\nabla^2 f(\bar x;\xi^k)-\nabla^2 f(\bar x)\bigr)
{\rm D}\prox{\lambda\vp}(\bar z).
\]
By \cref{assumption:sample-losses}, we have $\Exp[b_k]=0$,
$\Exp[\|b_k\|^2]<\infty$, $\Exp[J_k]=0$, and
$\Exp[\|J_k\|_F^q]<\infty$ for some $q>2$. Together with the stated step-size condition, these bounds verify the hypotheses of
\cref{lem:iid-moment-bounds-imply-stochastic-oracle}. Hence,
\cref{assumption:stochastic-oracle} holds at $\bar z$.
\end{proof}

\begin{proposition}[Active saddles induce unstable zeros]
\label{prop:normal-map-active-saddle-unstable}
Let \cref{assumption:normal-map-regularity} hold and let $z^*$ be given with $F_\lambda(z^*)=0$. Set $x^*:=\prox{\lambda\vp}(z^*)$. If $x^*\in\cX^*$, then ${\rm D}F_\lambda(z^*)$ has a negative eigenvalue.
\end{proposition}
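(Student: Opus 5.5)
We plan to compute ${\rm D}F_\lambda(z^*)$ explicitly via \cref{lem:active-prox-derivative} and reduce the spectral question to a symmetric eigenvalue problem on the subspace $S$. Since $x^*\in\cX^*$ admits an active manifold, \cref{lem:active-prox-derivative} gives $S$ and $Q\succeq 0$ with $\mathrm{range}(Q)\subset S$, and
\[
D:={\rm D}\prox{\lambda\vp}(z^*)=P_S(I+\lambda Q)^{-1}P_S .
\]
By the chain rule,
\[
{\rm D}F_\lambda(z^*)=\nabla^2 f(x^*)D+\lambda^{-1}(I-D)=:A .
\]
Because $Q$ maps $S$ into $S$ and is symmetric, it also annihilates $S^\perp$. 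Hence $D$ acts as $(I+\lambda Q|_S)^{-1}$ on $S$ and vanishes on $S^\perp$. In particular, $D$ is symmetric positive semidefinite, positive definite on $S$, and $I-D$ is the identity on $S^\perp$.

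The key step is to find a similarity or congruence-type argument showing that $A$ has a negative (real) eigenvalue. Write $H:=\nabla^2 f(x^*)$. Using $\lambda^{-1}(I-D)=\lambda^{-1}\bigl(P_{S^\perp}+P_S(I-(I+\lambda Q)^{-1})P_S\bigr)$ and $I-(I+\lambda Q)^{-1}=\lambda Q(I+\lambda Q)^{-1}$ on $S$, we get
\[
A=(H+Q)D+\lambda^{-1}P_{S^\perp}.
\]
Block-decompose $\Rd=S\oplus S^\perp$ with $D=\mathrm{diag}(D_S,0)$, $D_S\succ0$. Then
\[
A=\begin{pmatrix}(H+Q)_{SS}D_S & 0\\ H_{\perp S}D_S & \lambda^{-1}I\end{pmatrix},
\]
which is block lower triangular. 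Therefore $\operatorname{spec}(A)=\operatorname{spec}\bigl((H+Q)_{SS}D_S\bigr)\cup\{\lambda^{-1}\}$. Next, $(H+Q)_{SS}D_S$ is similar to $D_S^{1/2}(H+Q)_{SS}D_S^{1/2}$, a symmetric matrix. By Sylvester's law of inertia it has a negative eigenvalue if and only if $(H+Q)_{SS}=P_S(H+Q)P_S|_S$ does.

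Finally, we use the active strict saddle condition. By \cref{lem:active-prox-derivative}, ${\mathrm d}^2\psi(x^*|0)(h)=\langle h,(H+Q)h\rangle+\iota_S(h)$. So $\min_{\|h\|=1}{\mathrm d}^2\psi(x^*|0)(h)<0$ forces some unit $h\in S$ with $\langle h,(H+Q)h\rangle<0$, i.e., the compression of $H+Q$ to $S$ has a negative eigenvalue. Combined with the previous paragraph, this gives a negative (real) eigenvalue of $A={\rm D}F_\lambda(z^*)$.

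The main obstacle is the bookkeeping of $D$ and $I-D$ relative to the splitting $S\oplus S^\perp$. This requires using that $\mathrm{range}(Q)\subset S$ together with the symmetry of $Q$, so that $Q$ commutes with $P_S$ and the block-triangular structure is exact. It also requires checking that the off-diagonal block sits in the lower-left position, so that it does not affect the spectrum. Once this structure is in place, the similarity and inertia argument is routine.
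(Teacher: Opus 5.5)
Your proposal is correct and follows the paper's proof essentially step by step. Both arguments use the identity ${\rm D}F_\lambda(z^*)=[\nabla^2 f(x^*)+Q]\,P_S(I+\lambda Q)^{-1}P_S+\lambda^{-1}P_{S^\perp}$, the block lower-triangular structure with respect to $S\oplus S^\perp$, and the similarity of the $S$-block to a symmetric congruence of the compression of $\nabla^2 f(x^*)+Q$ to $S$. The only cosmetic difference is at the last step: you invoke Sylvester's law of inertia, whereas the paper directly exhibits the vector $B_S^{-1/2}y$ at which the symmetric matrix has a negative quadratic form.
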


\begin{proof}
According to \Cref{lem:active-prox-derivative}, there exist a linear subspace $S\subset \Rd$ and a positive semidefinite matrix $Q$ with $\mathrm{range}(Q) \subset S$ such that ${\mathrm d}^2\psi(x^*|0)(\cdot) = \iprod{\cdot}{[\nabla^2 f(x^*) + Q]\cdot} + \iota_S(\cdot)$ and ${\rm D}\prox{\lambda\vp}(z^*) = P_SBP_S$, where $B := (I+\lambda Q)^{-1} \succ 0$. Moreover, by assumption, there is $h \in S$ with $\langle{h},{[\nabla^2 f(x^*) + Q]h}\rangle < 0$. Using $I = P_S + P_{S^\perp}$, $P_S^2 = P_S$, $(I-B)B^{-1} = \lambda Q$, and $P_SQ = Q = QP_S$, we can write
\begin{equation} \label{eq:represent-df} {\rm D}F_\lambda(z^*) = \nabla^2 f(x^*) P_SBP_S + \lambda^{-1} (I-P_SBP_S) = [\nabla^2 f(x^*) + Q]P_SBP_S + \lambda^{-1} P_{S^\perp}. \end{equation}
Let $r=\dim(S)$, and let $U \in \R^{d \times r}$ and $V \in \R^{d \times (d-r)}$ be orthonormal bases for $S$ and $S^\perp$, respectively. Set $W = [U,V]$. Then, $P_S = UU^\top$ and $P_{S^\perp} = VV^\top$, and using $Q = Q^\top \succeq 0$ and $\mathrm{range}({Q}) \subset S$, we obtain
\[ W^\top B W = \begin{bmatrix} B_S & 0 \\ 0 & I \end{bmatrix} \quad \text{and} \quad U^\top B U = B_S, \quad \text{where} \quad B_S := (I+\lambda U^\top Q U)^{-1} \succ 0.  \]
Defining $C_S := U^\top[\nabla^2 f(x^*) + Q]U$, and using \eqref{eq:represent-df}, we can infer
\[ W^\top {\rm D}F_\lambda(z^*) W = W^\top [\nabla^2 f(x^*) + Q]U [B_S, 0] +  \begin{bmatrix} 0 & 0 \\ 0 & \lambda^{-1} I \end{bmatrix} = \begin{bmatrix} C_S B_S & 0 \\ V^\top [\nabla^2 f(x^*) + Q]U B_S & \lambda^{-1} I \end{bmatrix}. \]
This implies $\operatorname{spec}({\rm D}F_\lambda(z^*)) = \operatorname{spec}(C_SB_S) \cup \operatorname{spec}(\lambda^{-1}I)$. Furthermore, thanks to $B_S \succ 0$, the matrices $C_SB_S$ and $B_S^{1/2} C_SB_S^{1/2}$ are similar to each other and have the same eigenvalues. Since $h \in S$, there exists $y$ such that $h = Uy$. Setting $\tilde y := B_S^{-1/2} y$, we obtain
\[ \langle{\tilde y},{B_S^{1/2} C_SB_S^{1/2}\tilde y}\rangle = \langle{y},{C_Sy}\rangle = \langle{h},{[\nabla^2f(x^*)+Q]h}\rangle < 0. \]
Thus, the symmetric matrix $B_S^{1/2} C_SB_S^{1/2}$ has a negative eigenvalue which finishes the proof.
\end{proof}

Next, we present the main avoidance result for the normal map-based proximal stochastic gradient method (NSGD). Its proof relies on the almost sure convergence of $F_\lambda(z^k)$, which was established in \cite[Theorem~3.6]{qiu2025normal} under a different stochastic assumption. Below, we impose a martingale difference condition with bounded variance and lower boundedness of $\psi$, to satisfy the requirements in \cite{qiu2025normal}.
 
\begin{assumption}\label{assumption:NSGD bounded variance}
Denote $\Exp_k[\cdot]:=\Exp[\cdot\mid\cF_k]$, where $\cF_k=\sigma(z^0)\vee\cF_k^\xi$ is defined in \cref{def:probability-model}. Assume that, for every $k\in\N$, $\Exp_k[\nabla f(x^k;\xi^k)]=\nabla f(x^k)$ and that there is $\sigma\geq0$ such that $\Exp_k[\|\nabla f(x^k;\xi^k)-\nabla f(x^k)\|^2]\leq\sigma^2$ almost surely. Furthermore, we assume that both $f$ and $\vp$ are bounded from below.
\end{assumption}

\begin{corollary}[NSGD avoids active strict saddles]
\label{cor:nsgd-avoid-active-saddles}
Let \cref{assumption:normal-map-regularity,assumption:sample-losses,assumption:NSGD bounded variance} be satisfied
(with $\cX=\Rd$). Let $\{x^k\}_k$ be generated by \cref{alg:nsgd-iteration} using the step-sizes:
\begin{equation} \label{eq:step-sizes-nsgd-0}
\alpha_k\in\left(0,({\sL+\lambda^{-1}})^{-1}\right), \quad
{\sum}_{k=0}^\infty \alpha_k=\infty, \quad \text{and}\quad 
{\sum}_{k=0}^{\infty}r_{k+1}^{q/2}<\infty
\quad \text{where}\quad
r_k:={\sum}_{i=k}^{\infty}\alpha_i^2.
\end{equation}
If $z^0$ is drawn from any distribution
with a density and we set $x^0=\prox{\lambda\vp}(z^0)$, then it holds that
\[
\Prob({\lim}_{k\to\infty}~x^k\in\cX^*)=0.
\]
\end{corollary}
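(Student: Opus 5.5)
The plan is to reduce the corollary to a localized version of \cref{thm:avoid-strict-saddles}. We cannot apply that theorem verbatim. The normal map $F_\lambda$ need not be $\cC^1$ near \emph{every} unstable zero, since $\prox{\lambda\vp}$ is only guaranteed to be smooth near points whose prox-image admits an active manifold. We therefore work with the set
\[
\cZ^\sharp:=\{x^*-\lambda\nabla f(x^*):x^*\in\cX^*\}
\]
instead of $\cZ^*$, and rerun the covering/Fubini argument of \cref{thm:avoid-strict-saddles} with $\cZ^*$ replaced by $\cZ^\sharp$.

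\emph{Step 1 (from primal to dual convergence).} Under \cref{assumption:NSGD bounded variance} and the step-size rule \eqref{eq:step-sizes-nsgd-0}, I would invoke \cite[Theorem~3.6]{qiu2025normal}. The thing to check is that these step sizes satisfy its hypotheses; in particular $\sum_k\alpha_k^2<\infty$ is implied by $\sum_k r_{k+1}^{q/2}<\infty$. That result gives $F_\lambda(z^k)\to0$ almost surely. Since $z^k=x^k-\lambda\nabla f(x^k)+\lambda F_\lambda(z^k)$, on the event $\{x^k\to x^*\in\cX^*\}$ we get
\[
z^k\to z^*:=x^*-\lambda\nabla f(x^*)\in\cZ^\sharp .
\]
Moreover $F_\lambda(z^*)=0$ and $\prox{\lambda\vp}(z^*)=x^*$. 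Hence, up to a null event, $\{\lim_k x^k\in\cX^*\}\subset\{\lim_k z^k\in\cZ^\sharp\}$.

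\emph{Step 2 (local hypotheses at each $z^*\in\cZ^\sharp$).} Fix $z^*\in\cZ^\sharp$. The required ingredients are as follows.
\begin{itemize}
\item Unstable zero: \cref{prop:normal-map-active-saddle-unstable} shows that ${\rm D}F_\lambda(z^*)$ has a negative eigenvalue, so $z^*$ is an unstable zero.
\item Regularity: \cref{lem:active-prox-derivative} gives that $F_\lambda$ is $\cC^1$ near $z^*$.
\item Local equicontinuity and \cref{assumption:stochastic-oracle} at $z^*$: both follow from \cref{lem:nsgd-sample-losses-imply-stochastic-oracle}.
\item Global lipeomorphism: \cref{lem:nsgd-sample-map-lipeomorphism} gives it for every $\alpha<(\sL+\lambda^{-1})^{-1}$, with $\tilde\alpha:=\sup_k\alpha_k$.
\end{itemize}
The proof of \cref{thm:local-stable-null} is entirely local: it only uses these hypotheses at the fixed $z^*$ and on a small ball around it. It therefore yields a radius $\delta_{z^*}$ and, for each seed in $\cE_{\rm reg}(z^*)$, null sets $\mathcal N_m$, $m\ge m_0$.

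\emph{Step 3 (covering and Fubini).} By second countability, pick countably many $z_i\in\cZ^\sharp$ with $\cZ^\sharp\subset\bigcup_i B_{\delta_i/2}(z_i)$, and set $\cE_*:=\bigcap_i\cE_{\rm reg}(z_i)$. To avoid any measurability issue with $\cZ^\sharp$ itself, I would work with the measurable event
\[
\cE':=\bigcup_i\bigcup_{m}\{z^k\in \overline{B}_{\delta_i}(z_i)\ \forall k\ge m\}.
\]
Step 1 shows that $\{\lim_k x^k\in\cX^*\}$ is contained in $\cE'$ up to a null set. For a fixed seed $\omega_\xi\in\cE_*$, the $z^0$-section of $\cE'$ is contained in $\bigcup_{i,m}\Phi_{\omega_\xi,m}^{-1}(\mathcal N_{i,m})$. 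This uses $m\ge m_i$, which is harmless because the tail events increase in $m$. Each such preimage is null because $\Phi_{\omega_\xi,m}^{-1}$ is locally Lipschitz. Fubini plus absolute continuity of the law of $z^0$ then give $\Prob(\cE')=0$.

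The main obstacle is the localization in Steps 2–3. One must make sure \cref{thm:local-stable-null} really needs nothing about unstable zeros outside $\cZ^\sharp$; rereading its proof, only the fixed $z^*$ and a neighborhood of it enter. The other delicate point is matching the assumptions of \cite{qiu2025normal} to obtain $F_\lambda(z^k)\to0$ almost surely. If that theorem only gives convergence along the full sequence of $\|F_\lambda(z^k)\|$ in expectation-type form, I would instead use its almost-sure statement on the event of convergence of $x^k$.
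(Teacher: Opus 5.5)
Your proposal is correct and follows the paper's proof essentially step for step. Your set $\cZ^\sharp$ coincides with the paper's $\cZ_\lambda^*=\{z: F_\lambda(z)=0,\ \prox{\lambda\vp}(z)\in\cX^*\}$, and you check the local hypotheses with the same lemmas, rerun the covering/Fubini argument of the avoidance theorem on this set, and pass from $x^k$ to $z^k$ via $z^k=x^k-\lambda\nabla f(x^k)+\lambda F_\lambda(z^k)$ with $F_\lambda(z^k)\to0$ almost surely from \cite[Theorem~3.6]{qiu2025normal}; your explicit tail event $\cE'$ only spells out the localization and measurability that the paper states in one line.
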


\begin{proof}
We first define the saddle point set for the normal map variable $z$ as follows
\[
\cZ_\lambda^*:=\{z:F_\lambda(z)=0,\ \prox{\lambda\vp}(z)\in\cX^*\}.
\]
For every $z\in\cZ_\lambda^*$, the point $\prox{\lambda\vp}(z)$ admits an active
manifold by definition of $\cX^*$. Hence,
\cref{lem:nsgd-sample-losses-imply-stochastic-oracle} verifies local
equicontinuity and \cref{assumption:stochastic-oracle} at $z$.
Moreover, \cref{lem:active-prox-derivative} shows that $F_\lambda$ is $\cC^1$ near $z$,
\cref{lem:nsgd-sample-map-lipeomorphism} yields the lipeomorphism condition,
and \cref{prop:normal-map-active-saddle-unstable} shows that $z$ is an unstable zero.
Hence, all hypotheses used in the proof of
\cref{thm:avoid-strict-saddles} hold at every point of $\cZ_\lambda^*$, and we can infer 
\[
\Prob ({\lim}_{k\to\infty}~z^k\in\cZ_\lambda^*)=0.
\]
Moreover, as mentioned, the assumptions in \cite[Theorem 3.6]{qiu2025normal} are satisfied, and it follows that $F_\lambda(z^k)\to0$ almost surely. Next, suppose $x^k=\prox{\lambda\vp}(z^k)\to x^*\in\cX^*$ and let us set $z^*:=x^*-\lambda\nabla f(x^*)$. Due to $0 \in \partial \psi(x^*)$, we have $x^*=\prox{\lambda\vp}(z^*)$ and $F_\lambda(z^*)=0$. This implies $z^k = x^k-\lambda\nabla f(x^k)+\lambda F_\lambda(z^k) \to z^*$ (almost surely).
Thus, on the almost sure event $\{F_\lambda(z^k)\to0\}$, we obtain
\[
\{{\lim}_{k\to\infty}~\prox{\lambda\vp}(z^k)\in\cX^*\}
\subset
\{{\lim}_{k\to\infty}~z^k\in\cZ_\lambda^*\}.
\]
The right-hand event has probability zero, and the claim follows.
\end{proof}

\cref{cor:nsgd-avoid-active-saddles} proves saddle avoidance for NSGD\@. To turn this into convergence to local minimizers, we use the Kurdyka-{\L}ojasiewicz (KL)-based iterate convergence result from \cite[Theorem~4.2]{qiu2025normal}. Once the iterates are known to converge to a stationary point, \cref{cor:nsgd-avoid-active-saddles} rules out active strict saddles and leaves only local minimizers (under the active strict saddle property). 

The result in \cite[Theorem~4.2]{qiu2025normal} requires additional and different conditions on the geometry of $\psi$ and on the step-sizes. In particular, we additionally assume that the objective function $\psi$ is definable (in an $o$-minimal structure). Definability is a standard and mild geometric assumption covering many practical and important function classes, such as semi-algebraic and globally subanalytic functions; see \cite{van1998tame,kur98,AttBolSva13} for more background and formal definitions. Concerning step-sizes, we will work with the following additional condition:
\begin{equation}\label{eq:step-sizes for convergence}
    {\sum}_{k=0}^\infty
    \alpha_k^2
    \Big({\sum}_{i=0}^k \alpha_i\Big)^\varrho
    <\infty\qquad \text{for some}\quad \varrho>1.
\end{equation}
A standard step-size choice ensuring both iterate convergence and saddle avoidance is, e.g.,
\begin{equation*}
     0<\alpha<\frac{1}{\sL+\lambda^{-1}},
    \qquad
    \alpha_k=\frac{\alpha}{(k+1)^{p}},
    \qquad
    p\in\Big(
        \max\Big\{\frac{2}{3},\frac{1}{2}+\frac{1}{q}\Big\},
        1
    \Big].
\end{equation*}
Indeed, $p\leq 1$ implies $\sum_{k=0}^\infty \alpha_k=\infty$ and the condition $p>\frac{1}{2}+\frac{1}{q}$ ensures $
        {\sum}_{k=0}^{\infty}
        \left({\sum}_{i=k}^{\infty}\alpha_i^2\right)^{q/2}
        <\infty$,
while $p>\frac{2}{3}$ guarantees \eqref{eq:step-sizes for convergence}.
\begin{corollary}[NSGD converges to local minimizers]
\label{cor:nsgd-converges-local-minimizers}
We consider the setting in \cref{cor:nsgd-avoid-active-saddles}. Let the sequence $\{x^k\}_k$ be generated by \cref{alg:nsgd-iteration} with step-sizes satisfying \eqref{eq:step-sizes-nsgd-0} and \eqref{eq:step-sizes for convergence}. Furthermore, suppose $\psi$ is definable and $\{x^k\}_k$ is bounded almost surely.
Then, the sequence $\{x^k\}_k$ converges to a local minimizer of $\psi$ almost surely --- provided that the active strict saddle property holds.
\end{corollary}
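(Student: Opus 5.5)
The plan is to combine three ingredients on a common almost sure event: iterate convergence from \cite[Theorem~4.2]{qiu2025normal}, the avoidance result \cref{cor:nsgd-avoid-active-saddles}, and the active strict saddle property.

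\textbf{Step 1: iterate convergence.} We first check that the hypotheses of \cite[Theorem~4.2]{qiu2025normal} hold in the present setting:
\begin{itemize}
\item $\psi$ is definable, so it satisfies the KL property at every point;
\item \cref{assumption:normal-map-regularity} supplies Lipschitz smoothness of $f$;
\item \cref{assumption:NSGD bounded variance} supplies the unbiasedness and bounded variance conditions together with lower boundedness of $\psi$;
\item the step sizes satisfy $\sum_k\alpha_k=\infty$ and \eqref{eq:step-sizes for convergence};
\item the iterates $\{x^k\}_k$ are bounded almost surely.
\end{itemize}
That theorem then gives an event $\Omega_1$ of full probability on which $x^k\to\bar x$ for some stationary point $\bar x$ of $\psi$, i.e.\ $0\in\partial\psi(\bar x)$. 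If the cited theorem only yields stationarity through $F_\lambda(z^k)\to0$, we recover it directly. Indeed, \cite[Theorem~3.6]{qiu2025normal} gives $F_\lambda(z^k)\to0$ almost surely. By \cref{lem:normal-map-bounds}, $\dist(0,\partial\psi(x^k))\le\|F_\lambda(z^k)\|\to0$. Closedness of the graph of the limiting subdifferential, together with continuity of $f$ and of $\vp$ along $\prox{\lambda\vp}$-images, then shows $0\in\partial\psi(\bar x)$. For $\vp$ we use that $\vp(x^k)\to\vp(\bar x)$ follows from the subgradient inequality in the convex case.

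\textbf{Step 2: avoidance.} Let $\Omega_2$ be the complement of the event $\{\lim_k x^k\in\cX^*\}$. By \cref{cor:nsgd-avoid-active-saddles}, which is applicable because \eqref{eq:step-sizes-nsgd-0} is assumed, we have $\Prob(\Omega_2)=1$.

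\textbf{Step 3: conclusion.} On $\Omega_1\cap\Omega_2$, which has probability one, the limit $\bar x$ is stationary but not an active strict saddle. By the active strict saddle property (\cref{def:active strict saddle}), $\bar x$ must be a local minimizer of $\psi$.

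The main obstacle is Step 1. We must confirm that the stochastic and step-size assumptions of \cite[Theorem~4.2]{qiu2025normal} are exactly covered here, and in particular that the almost sure boundedness of $\{x^k\}_k$ and the exponent condition \eqref{eq:step-sizes for convergence} with $\varrho>1$ match that theorem's requirements. We must also make sure the limit is measurable, so that the intersection of events is legitimate. If the cited theorem is stated for the $z$-iterates, we transfer convergence to $x^k=\prox{\lambda\vp}(z^k)$ using nonexpansiveness of the proximity operator. Conversely, convergence of $x^k$ together with $F_\lambda(z^k)\to0$ gives convergence of $z^k=x^k-\lambda\nabla f(x^k)+\lambda F_\lambda(z^k)$, exactly as in the proof of \cref{cor:nsgd-avoid-active-saddles}.
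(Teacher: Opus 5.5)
Your proposal is correct and follows the paper's proof: \cite[Theorem~4.2]{qiu2025normal} gives almost sure convergence of $\{x^k\}_k$ to a stationary point, \cref{cor:nsgd-avoid-active-saddles} rules out limits in $\cX^*$ on that event, and the active strict saddle property then forces the limit to be a local minimizer. Your fallback arguments are harmless but unnecessary, since the cited theorem already yields convergence of $x^k$ to a stationary point; these are the recovery of stationarity from $F_\lambda(z^k)\to0$ and the transfer between $z^k$ and $x^k$.
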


\begin{proof}
Under definability of $\psi$ and boundedness of $\{x^k\}_k$, \cite[Theorem~4.2]{qiu2025normal}
shows that the sequence $\{x^k\}_k$ converges to a stationary point $x^*$ of
$\psi$ almost surely. On this event,
\cref{cor:nsgd-avoid-active-saddles} rules out $x^*\in\cX^*$. Since every
stationary point that is not a local minimizer is an active strict saddle, the
limit $x^*$ must be a local minimizer of $\psi$.
\end{proof}

\section{Application II: Random reshuffling}
\label{sec:application-rr}

Random reshuffling (RR) is a basic without-replacement implementation of stochastic methods for finite-sum problems. At the beginning of each epoch, the data are permuted and then processed once. This sampling rule is common in large-scale training, but it differs from the classical stochastic approximation methods. 

In this section, we show that RR fits our framework for stochastic recursions. In particular, we prove almost sure avoidance of strict saddles for the unmodified RR\@. The result does not require the local RR escape condition in \cite{Beneventano2024TrajectoriesSGDWithoutReplacement}, which assumes that the bias generated by reshuffling has a nonzero projection onto an escaping direction at the saddle.

Consider the finite-sum problem
\[
    {\min}_{x\in\Rd}~f(x):=\frac1n{\sum}_{i=1}^n \, f_i(x).
\]
The random reshuffling iteration is given in \cref{alg:rr-iteration}.

\begin{algorithm}[h]
\caption{Random reshuffling}
\label{alg:rr-iteration}
\begin{algorithmic}
    \Require Choose an initial point $x^0$ and step-sizes $\{\eta_k\}_{k\ge0}$.
    \For{$k=0,1,2,\ldots$}
        \State Draw a permutation $\pi^k$ of $\{1,\ldots,n\}$ and set $y_0^k= x^k$.
        \For{$i=1,\ldots,n$}
            \State $y_i^k=
            y_{i-1}^k-\eta_k\nabla f_{\pi_i^k}(y_{i-1}^k)$.
        \EndFor
        \State Set $x^{k+1}= y_n^k$.
    \EndFor
\end{algorithmic}
\end{algorithm}

We now write one RR epoch as one sampled update step of the stochastic recursion, with mean field
$F=\nabla f$. Let $\xi=(\eta,\pi)$
denote the epoch seed. Starting from $x$, set $y_0(x;\xi)=x$ and
\[
    y_i(x;\xi)
    =
    y_{i-1}(x;\xi)-\eta\nabla f_{\pi_i}(y_{i-1}(x;\xi)),
    \qquad i=1,\ldots,n.
\]
Then, with $\alpha_k=n\eta_k$ and $\xi^k=(\eta_k,\pi^k)$,
\cref{alg:rr-iteration} becomes
\begin{equation}\label{eq:rr-fixed-point-recursion}
    x^{k+1}
    =
    x^k-\alpha_kG(x^k;\xi^k)\qquad \text{where}\qquad G(x;\xi):=
    \frac1n{\sum}_{i=1}^n\nabla f_{\pi_i}(y_{i-1}(x;\xi)).
\end{equation}

We impose the following regularity condition on the component functions.

\begin{assumption}[Regularity]
\label{assumption:rr-regularity}
Each $f_i:\Rd\to\R$ is $\cC^2$, $\nabla f_i$ is globally $\sL$-Lipschitz
with a common modulus $\sL>0$, and the Hessians $\nabla^2 f_i$, $i \in \{1,\dots,n\}$, are locally Lipschitz.
\end{assumption}

\cref{assumption:rr-regularity} is standard in the analysis of random
reshuffling; see, e.g.,
\cite{nguyen2020unified,li2021convergence,qiu2023new,josz2024proximal}. It ensures the smoothness of the epoch map and the local controls needed by our framework for stochastic recursions. The proposition below verifies the corresponding stochastic oracle condition for $G$.

\begin{proposition}[Random reshuffling oracle]
\label{prop:rr-oracle}
Suppose \cref{assumption:rr-regularity} holds and fix
$0<\bar\eta\le(4n\sL)^{-1}$. For epoch seeds $\xi=(\eta,\pi)$ with
$0<\eta\le\bar\eta$, the oracle $G$ satisfies \cref{assumption:F}.
Moreover, if
\[
    \eta_k\in(0,\bar\eta],\qquad
    \alpha_k=n\eta_k,\qquad
    {\sum}_{k=0}^\infty \eta_k^2<\infty,
\]
then \cref{assumption:stochastic-oracle} holds for the seed sequence
$\xi^k=(\eta_k,\pi^k)$, for every sequence of permutations.
\end{proposition}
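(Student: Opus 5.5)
The proof has two parts: verify the three items of \cref{assumption:F} for the epoch oracle $G(x;\eta,\pi)$, and obtain \cref{assumption:stochastic-oracle} from \cref{prop:without-replacement-implies-stochastic-oracle} applied with $G_i:=\nabla f_i$. Under \cref{assumption:rr-regularity}, each $G_i$ is $\cC^1$ with locally Lipschitz derivative $\nabla^2 f_i$, and $F=\frac1n\sum_i G_i=\nabla f$. So the second bullet of \cref{prop:without-replacement-implies-stochastic-oracle}, with $\alpha_k=n\eta_k$ and $\sum_k\eta_k^2<\infty$, gives \cref{assumption:stochastic-oracle} directly, for every sequence of permutations. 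Likewise, its first bullet with $\bar\eta$ gives equicontinuity of $\{{\rm D}G(\cdot;\eta,\pi)\}$ on compact sets, uniformly over $0<\eta\le\bar\eta$ and all permutations. This is the local equicontinuity item, with $\cU_{z^*}=\Rd$. Differentiability of $G(\cdot;\xi)$ holds because it is a composition of $\cC^1$ maps.

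Regularity is immediate, since $F=\nabla f\in\cC^1(\Rd)$.

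The main work is the lipeomorphism item. Here I will not treat $T_{\alpha,\xi}=I-\alpha G(\cdot;\xi)$ abstractly. Instead I use the identity
\[
T_{n\eta,\xi}(x)=y_n(x;\xi)=(I-\eta\nabla f_{\pi_n})\circ\cdots\circ(I-\eta\nabla f_{\pi_1})(x),
\]
which holds because $x-n\eta\cdot\frac1n\sum_i\nabla f_{\pi_i}(y_{i-1})$ telescopes to $y_n$.

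\begin{itemize}
\item Each factor $I-\eta\nabla f_i$ with $\eta\sL\le 1/(4n)<1$ is globally bi-Lipschitz. Its Lipschitz constant is at most $1+\eta\sL$.
\item For the inverse, invertibility follows by Banach's fixed point theorem: for $w$ given, the map $x\mapsto w+\eta\nabla f_i(x)$ is a contraction.
\item The inverse has Lipschitz constant at most $(1-\eta\sL)^{-1}$, since $\|x-x'\|\le\|w-w'\|+\eta\sL\|x-x'\|$. (This is presumably the content of \cref{lem:invertible}.)
\item A composition of $n$ such maps is therefore a global bijection with globally Lipschitz inverse $(I-\eta\nabla f_{\pi_1})^{-1}\circ\cdots\circ(I-\eta\nabla f_{\pi_n})^{-1}$.
\end{itemize}

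The assumption concerns $T_{\alpha,\xi}$ for every $0<\alpha\le\tilde\alpha$ with $\xi$ fixed. Since the seed carries $\eta$ and the algorithm uses $\alpha=n\eta$, I define the oracle family so that $\alpha$ and $\eta$ are coupled; this matches how the recursion \eqref{eq:rr-fixed-point-recursion} is actually run, and it is this pairing that the proof of \cref{thm:avoid-strict-saddles} uses. As a more robust route that covers arbitrary $\alpha\le\tilde\alpha$, I would instead show that $G(\cdot;\eta,\pi)$ is globally Lipschitz with a constant independent of $\eta\le\bar\eta$ and $\pi$. By induction, each $y_{i-1}(\cdot;\xi)$ is $(1+\eta\sL)^{i-1}\le e^{1/4}$-Lipschitz, so $G$ is $e^{1/4}\sL$-Lipschitz. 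Then \cref{lem:invertible}-type reasoning applies to $I-\alpha G$ for $\alpha<(e^{1/4}\sL)^{-1}$, and choosing $\tilde\alpha:=n\bar\eta\le 1/(4\sL)$ keeps this inside the admissible range. I would present this second route, because it matches the assumption as stated. The coupled composition route serves only as intuition.

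The main obstacle I anticipate is precisely this $\alpha$-versus-$\eta$ coupling, that is, making sure the lipeomorphism statement is established in the form the abstract theorem consumes. The uniform Lipschitz bound on $G$ resolves it cleanly. Everything else is delegated to \cref{prop:without-replacement-implies-stochastic-oracle}.
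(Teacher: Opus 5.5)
Your proposal is correct and takes essentially the same route as the paper. Both arguments delegate equicontinuity and \cref{assumption:stochastic-oracle} to \cref{prop:without-replacement-implies-stochastic-oracle} with $G_i=\nabla f_i$. Both then use the inner-iterate bound $(1+\eta\sL)^{i}$ to show that $G(\cdot;\xi)$ is globally Lipschitz uniformly in $\xi$, and invoke \cref{lem:invertible} for the lipeomorphism. The only differences are minor: you bound the constant by $e^{1/4}\sL$ rather than $2\sL$, and you state the choice $\tilde\alpha=n\bar\eta$ explicitly, which the paper leaves implicit; your composition-of-bi-Lipschitz-maps remark is extra intuition the proof does not need.
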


\begin{proof}
The form \eqref{eq:rr-fixed-point-recursion} and
\cref{assumption:stochastic-oracle} follow from
\cref{prop:without-replacement-implies-stochastic-oracle} with
$G_i=\nabla f_i$. In addition, \cref{prop:without-replacement-implies-stochastic-oracle} establishes the equicontinuity of ${\rm D}G(\cdot;\xi)$ for all epoch seeds $\xi=(\eta,\pi)$ with $0<\eta\le\bar\eta$. Since
$F=\nabla f$ and each $f_i$ is $\cC^2$, the mean field is $\cC^1$.

It remains to verify the lipeomorphism condition. For any
$\xi=(\eta,\pi)$ with $0<\eta\le\bar\eta$, the inner maps satisfy
\[
    \|y_i(z;\xi)-y_i(w;\xi)\|
    \le
    (1+\eta \sL)^i\|z-w\|,
    \qquad \forall\, i=0,\ldots,n.
\]
Hence, $G(\cdot;\xi)$ is Lipschitz continuous, uniformly over all such
$\xi$. That is,
\[
    \|G(z;\xi)-G(w;\xi)\|
    \le
    \widetilde \sL\|z-w\|\qquad \text{where}
    \qquad
    \widetilde \sL
    :=
    \frac{\sL}{n}{\sum}_{i=0}^{n-1}(1+\bar\eta \sL)^i < 2\sL.
\]
The bound $\widetilde \sL<2\sL$ holds because $\bar\eta \sL\le(4n)^{-1}$. 
By \cref{lem:invertible}, $
    z\mapsto z-\alpha G(z;\xi)$ 
is a lipeomorphism for every $0<\alpha<1/\widetilde \sL$. Thus, 
\cref{assumption:F} is verified.
\end{proof}

Due to the smoothness of the objective function, the strict saddle set is given by
\[
\cX^*:=\big\{x\in\Rd:\nabla f(x)=0,\ \lambda_{\min}(\nabla^2f(x))<0 \big\}
\]
in this application. We now present our main avoidance result for random reshuffling.

\begin{corollary}[Random reshuffling avoids strict saddles]
\label{cor:rr-avoids-strict-saddles}
Suppose \cref{assumption:rr-regularity} holds. Let $\{x^k\}_k$ be generated by \cref{alg:rr-iteration} with step-sizes $\{\eta_k\}_k$ satisfying
\[
0<\eta_k \leq \frac{1}{4n\sL},\qquad  {\sum}_{k=0}^\infty\, \eta_k=\infty
    \qquad\text{and}\qquad 
    {\sum}_{k=0}^\infty\, \eta_k^2<\infty.
\]
If $x^0$ is drawn from any distribution with a density, then $\Prob(\lim_{k\to\infty}x^k \in \cX^*) = 0$.
\end{corollary}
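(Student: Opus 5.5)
The plan is to reduce \cref{cor:rr-avoids-strict-saddles} to \cref{thm:avoid-strict-saddles}. We write \cref{alg:rr-iteration} in the form \eqref{eq:rr-fixed-point-recursion} with state $z=x$, mean field $F=\nabla f$, seeds $\xi^k=(\eta_k,\pi^k)$ and step-sizes $\alpha_k=n\eta_k$. Four things then need checking. First, strict saddles of $f$ are unstable zeros of $F$. Second, \cref{assumption:F,assumption:stochastic-oracle} hold. Third, the step-size standing requirements of \cref{subsec:main-avoidance-results} hold. Fourth, the probabilistic model of \cref{def:probability-model} applies.

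The first point is immediate. If $x^*\in\cX^*$, then $F(x^*)=\nabla f(x^*)=0$. Moreover ${\rm D}F(x^*)=\nabla^2 f(x^*)$ is symmetric with a negative eigenvalue, so $x^*\in\cZ^*$ and hence $\cX^*\subset\cZ^*$. It therefore suffices to show $\Prob(\lim_k x^k\in\cZ^*)=0$.

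For the second point, set $\bar\eta:=(4n\sL)^{-1}$. Since $\eta_k\le\bar\eta$ and $\sum_k\eta_k^2<\infty$, \cref{prop:rr-oracle} gives \cref{assumption:F}, including regularity, equicontinuity and the lipeomorphism with threshold $\tilde\alpha<1/\widetilde\sL$. It also gives \cref{assumption:stochastic-oracle} for every permutation sequence, and in particular on the whole seed space. So $\cE_{\rm reg}$ can be taken to be all of $\Omega_\xi$, and no independence between the $\pi^k$ is needed.

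The third point is where I expect the only real friction. The standing requirement is $0<\alpha_k\le\tilde\alpha$ with $\tilde\alpha$ below the lipeomorphism threshold. \cref{prop:rr-oracle} delivers the lipeomorphism for $\alpha<1/\widetilde\sL$, where $\widetilde\sL<2\sL$. We have $\alpha_k=n\eta_k\le 1/(4\sL)<1/(2\sL)<1/\widetilde\sL$. Taking $\tilde\alpha:=1/(4\sL)$ therefore works.

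The step-size conditions are also satisfied. The sum $\sum_k\alpha_k=n\sum_k\eta_k$ diverges. Summability of $\eta_k^2$ forces $\eta_k\to0$, so $\alpha_k\to0$.

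For the fourth point, we must make sure \cref{thm:avoid-strict-saddles} is applicable as stated. The seed $\xi^k=(\eta_k,\pi^k)$ takes values in $\Xi=(0,\bar\eta]\times\mathfrak S_n$ with the product Borel/discrete $\sigma$-algebra. Its $\eta$-component is deterministic.

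Measurability of $(x,\xi)\mapsto G(x;\xi)$ and of ${\rm D}G$ follows because, for each of the finitely many permutations, $G$ is a composition of continuous maps jointly continuous in $(x,\eta)$.

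Independence of $x^0$ from $\{\pi^k\}$ is implicit in the algorithm, where the permutations are drawn independently of the initialization. We take $\D$ to be the law of $x^0$, which is absolutely continuous.

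\cref{thm:avoid-strict-saddles} then yields $\Prob(\lim_k x^k\in\cZ^*)=0$, and since $\cX^*\subset\cZ^*$ the claim follows.
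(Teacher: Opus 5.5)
Your proposal is correct and follows the same route as the paper: both use \cref{prop:rr-oracle} to verify \cref{assumption:F,assumption:stochastic-oracle} for the epoch recursion, note that strict saddles of $f$ are unstable zeros of $F=\nabla f$, and then apply \cref{thm:avoid-strict-saddles}. Your extra checks make explicit what the paper's short proof leaves implicit: the step-size threshold $\alpha_k=n\eta_k\le 1/(4\sL)<1/\widetilde\sL$, $\alpha_k\to0$, measurability of $G$ and ${\rm D}G$, and independence of $x^0$ from the permutations.
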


\begin{proof}
By \cref{prop:rr-oracle}, the epoch recursion
\eqref{eq:rr-fixed-point-recursion} satisfies \cref{assumption:F} and
\cref{assumption:stochastic-oracle}. For $F=\nabla f$,
every point satisfying
$\nabla f(x)=0$ and $\lambda_{\min}(\nabla^2 f(x))<0$ is an unstable zero.
The conclusion follows from \cref{thm:avoid-strict-saddles} with $z^k = x^k$ and $\cZ^*=\cX^*$.
\end{proof}

\begin{remark}[Existing avoidance results for random reshuffling]
Most convergence results for random reshuffling focus on first-order
stationarity, rates, last-iterate convergence, or full sequence convergence; see,
e.g.,
\cite{nguyen2020unified,li2021convergence,qiu2024random,josz2024proximal}.
For finite-time saddle avoidance, the work of Yu and Li
\cite{YuLi2023HighProbabilityRR} gives a high-probability complexity bound for reaching an $\varepsilon$-second-order stationary point with a perturbed RR scheme. This is in line with finite-time saddle escape guarantees for first-order methods based on injected perturbations
\cite{GeHuangJinYuan2015,JinGeNetrapalliKakadeJordan2017,JinNetrapalliGeKakadeJordan2019}.
Moreover, in \cite{Beneventano2024TrajectoriesSGDWithoutReplacement}, Beneventano proved a local saddle escape result. Near a given saddle, the analysis shows that the without-replacement bias can push the trajectory along an escaping direction. The key assumption is a nonzero projection condition. At a saddle $x$ and along an escaping direction $v$, the following assumption is made:
\[
    \frac1n{\sum}_{i=1}^n
    \langle v,\nabla^2 f_i(x)\nabla f_i(x)\rangle \ne 0.
\]
Since this condition ensures that the escape mechanism has a component
in an unstable direction, it plays a role analogous to unit excitation
\eqref{eq:assumption UE}.

Our result is an asymptotic avoidance statement for the unmodified RR\@. Relative to standard convergence analyses of RR \cite{nguyen2020unified,li2021convergence,qiu2024random,josz2024proximal}, the additional condition used here is \emph{local Lipschitz continuity} of the Hessians. No injected perturbations or unit excitation-type assumption is imposed.
\end{remark}

\begin{remark}[Composite problems]
The framework may also extend to composite finite-sum problems through the normal map-based proximal random reshuffling method \cite{qiu2023new}. The normal map-based analysis in \cref{prop:normal-map-active-saddle-unstable} shows that active strict saddles give rise to unstable zeros of the normal map field. Sampling without replacement has been verified to satisfy the required oracle conditions in \cref{prop:without-replacement-implies-stochastic-oracle}. These arguments give saddle avoidance for normal map-based RR\@. Combined with the iterate convergence guarantees in \cite[Theorem~21]{qiu2023new}, such an avoidance result would yield convergence to a local minimizer under the active strict saddle property.
\end{remark}

\section{Conclusion}
We develop a framework for proving that stochastic recursions almost surely avoid unstable zeros, without requiring unit excitation. Instead of using noise to push the iterates away, we show that only a null set of initial states can generate trajectories that remain near an unstable zero. The sampled maps need not have a common fixed point or a common linearization. We verify the required conditions for martingale-type sampling and sampling without replacement. Specifically, we apply the framework to establish avoidance guarantees for stochastic mirror descent, a normal map-based proximal stochastic gradient method, and random reshuffling.

We expect that the proposed framework can be used to study a broader range of algorithms, including distributed methods \cite{tsitsiklis1986distributed,nedic2009distributed}, stochastic momentum methods \cite{sutskever2013importance}, stochastic alternating methods \cite{driggs2021stochastic}, and stochastic block-coordinate methods \cite{dang2015stochastic}. Applying our framework to these algorithms may require some algorithm-specific adaptations, but the main proof strategy and techniques should remain highly similar. For distributed methods, one may first analyze the averaged iterate and then use consensus to transfer the saddle-avoidance result to the local iterates. For stochastic momentum methods, one may instead analyze an augmented state consisting of the iterate and the momentum variable. We leave the study of such additional applications to future research.

\section*{Declarations}

\paragraph{Funding.} Junyu Zhang was partly supported by the Singapore Ministry of Education under the AcRF Tier 2 Grant No.\ MOE-T2EP20125-0007. Andre Milzarek was partly supported by the National Natural Science Foundation of China under Grant No.\ W2532005 and by the Guangdong Provincial Key Laboratory of Mathematical Foundations for Artificial Intelligence (2023B1212010001).
\paragraph{Conflict of interest.} The authors declare that they have no conflict of interest.
\appendix
\section{Preparatory tools}

\begin{lemma}[Lipeomorphism induced by a Lipschitz map]
\label{lem:invertible}
Suppose that $F:\Rd\to\Rd$ is Lipschitz continuous with constant $\sL_F>0$.
For any $\alpha\in(0,1/\sL_F)$, define $T:=I-\alpha F$. Then, $T$ has a
well-defined inverse on $\Rd$, and both $T$ and $T^{-1}$ are globally
Lipschitz. Moreover,
\[
(1-\alpha \sL_F)\|x-z\|
\le
\|T(x)-T(z)\|
\le
(1+\alpha \sL_F)\|x-z\|
\qquad
\forall\,x,z\in\Rd.
\]
\end{lemma}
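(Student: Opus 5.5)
The approach is the classical Banach fixed-point (Neumann-type) argument for Lipschitz perturbations of the identity. The two-sided estimate comes first, since it already gives injectivity and Lipschitz continuity of the inverse. For $x,z\in\Rd$ we write $T(x)-T(z)=(x-z)-\alpha(F(x)-F(z))$. The triangle inequality together with $\|F(x)-F(z)\|\le\sL_F\|x-z\|$ gives the upper bound $(1+\alpha\sL_F)\|x-z\|$. The reverse triangle inequality gives the lower bound $(1-\alpha\sL_F)\|x-z\|$. Since $\alpha\sL_F<1$, the lower bound is strictly positive whenever $x\neq z$, so $T$ is injective. Any inverse defined on $T(\Rd)$ is then Lipschitz with constant $(1-\alpha\sL_F)^{-1}$: apply the lower bound to $x=T^{-1}(u)$ and $z=T^{-1}(v)$.

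The remaining point is surjectivity, which makes the inverse well defined on all of $\Rd$. Fix $y\in\Rd$ and consider the map $\Psi_y(x):=y+\alpha F(x)$. Solving $T(x)=y$ is equivalent to finding a fixed point $x=\Psi_y(x)$. The map $\Psi_y$ satisfies $\|\Psi_y(x)-\Psi_y(z)\|\le\alpha\sL_F\|x-z\|$ with $\alpha\sL_F<1$, so it is a contraction on the complete space $\Rd$. Banach's fixed-point theorem then yields a unique $x_y$ with $T(x_y)=y$. Hence $T$ is a bijection of $\Rd$. Its inverse $y\mapsto x_y$ is globally Lipschitz by the previous paragraph, and $T$ itself is globally Lipschitz by the upper bound.

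No step here is a genuine obstacle. The only point needing care is that surjectivity relies on completeness of $\Rd$ and on the global (not merely local) Lipschitz bound on $F$, since the contraction must act on the whole space. Under the stated hypotheses both hold, so the argument closes directly.
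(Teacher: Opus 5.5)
Your proposal is correct and matches the paper's proof step for step. The paper likewise derives the two-sided estimate from the triangle and reverse triangle inequalities, uses the lower bound for injectivity and for the Lipschitz constant $(1-\alpha \sL_F)^{-1}$ of the inverse, and obtains surjectivity from Banach's fixed-point theorem applied to the contraction $\Psi_y(x)=y+\alpha F(x)$.
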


\begin{proof}
Let $x,z\in\Rd$. Then, we have
\[
T(x)-T(z)
=
x-z-\alpha\bigl(F(x)-F(z)\bigr).
\]
Hence, by the reverse triangle inequality and the Lipschitz continuity of $F$,
\[
\|T(x)-T(z)\|
\ge
\|x-z\|-\alpha\|F(x)-F(z)\|
\ge
(1-\alpha \sL_F)\|x-z\|.
\]
The upper bound follows similarly from the triangle inequality. Since
$1-\alpha \sL_F>0$, the lower bound implies injectivity. To prove surjectivity,
fix $y\in\Rd$. Solving $T(x)=y$ is to find a fixed point of
\[
\Psi_y(x):=y+\alpha F(x).
\]
The map $\Psi_y$ is a contraction because $\alpha \sL_F<1$, so Banach's fixed
point theorem gives a unique fixed point $x_y$. Then $T(x_y)=y$ and thus, $T$ is bijective. The lower bound gives
\[
\|T^{-1}(u)-T^{-1}(v)\|
\le
(1-\alpha \sL_F)^{-1}\|u-v\|
\qquad
\forall\,u,v\in\Rd,
\]
so $T^{-1}$ is Lipschitz. Since $T$ is also Lipschitz, the proof is complete.
\end{proof}
Next, we state a growth estimate for a generic Jordan block. We write a
Jordan block associated with an eigenvalue $\lambda\in\mathbb C$ as
\begin{equation}
    \label{eq:generic Jordan}
    J=\lambda I_m+N,
\end{equation}
where $I_m$ is the $m\times m$ identity matrix and
$N\in\R^{m\times m}$ is the nilpotent Jordan shift. In particular,
$N^m=0$; see \cite[Theorem~3.1.5]{HornJohnson2012}.

\begin{lemma}[Generic Jordan block estimate]
\label{lem:generic-jordan-block-transition}
Let $J$ be defined in \eqref{eq:generic Jordan}. Let 
$\{\alpha_\ell\}\subset\R_{++}$ satisfy $\alpha_\ell\to0$ and
$\alpha_\ell|\lambda|\le 1/2$ for all $\ell$. For $k\ge j$, set
$t_{k,j}:={\sum}_{\ell=j}^{k-1}\alpha_\ell$ and
$\Phi_J(k,j):={\prod}_{\ell=j}^{k-1}(I-\alpha_\ell J)$, with the convention
$\Phi_J(j,j)=I_m$. The following bounds hold.
\begin{itemize}
    \item If $\Re(\lambda)\ge0$, then for any $\kappa>0$ there exists
$D^+\ge1$ such that
\[
\|\Phi_J(k,j)\|\le D^+\exp(\kappa t_{k,j}),\qquad k\ge j.
\]
\item If $\Re(\lambda)<0$, then there exist $\nu>0$ and $D^-\ge1$ such that
\[
\|\Phi_J(k,j)^{-1}\|\le D^-\exp(-\nu t_{k,j}),\qquad k\ge j.
\]
\end{itemize}
\end{lemma}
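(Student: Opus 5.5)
We work directly with the Jordan structure. Since $N$ is nilpotent and commutes with $I_m$, every factor $I-\alpha_\ell J=(1-\alpha_\ell\lambda)I_m-\alpha_\ell N$ is a polynomial in $N$, so all factors commute. The assumption $\alpha_\ell|\lambda|\le1/2$ gives $|1-\alpha_\ell\lambda|\ge 1/2$, so each scalar factor is invertible. We then factor
\[
I-\alpha_\ell J=(1-\alpha_\ell\lambda)\bigl(I_m-\beta_\ell N\bigr),\qquad \beta_\ell:=\frac{\alpha_\ell}{1-\alpha_\ell\lambda},\quad |\beta_\ell|\le 2\alpha_\ell .
\]
Writing $p_{k,j}:=\prod_{\ell=j}^{k-1}(1-\alpha_\ell\lambda)$, this yields $\Phi_J(k,j)=p_{k,j}\prod_{\ell=j}^{k-1}(I_m-\beta_\ell N)$. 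The proof therefore reduces to two estimates: a scalar estimate for $|p_{k,j}|$ and a polynomial estimate for the nilpotent product.

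\textbf{Nilpotent product.} Because $N^m=0$, expanding the product gives $\sum_{r=0}^{m-1}e_r(-\beta_j,\dots,-\beta_{k-1})N^r$, where $e_r$ is the $r$-th elementary symmetric polynomial. We bound $|e_r|\le(\sum|\beta_\ell|)^r/r!\le(2t_{k,j})^r/r!$, and hence $\|\prod_\ell(I_m-\beta_\ell N)\|\le C_m(1+t_{k,j})^{m-1}$. The inverse is handled the same way: $(I_m-\beta_\ell N)^{-1}=\sum_{r<m}\beta_\ell^rN^r$, and since everything commutes, the inverse of the product is again a polynomial in $N$ with coefficients bounded by $\prod_\ell(1+\sum_r|\beta_\ell|^r\|N\|^r)$. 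It is cleaner to bound this as $\|(I_m-\beta_\ell N)^{-1}\|$ in a weighted norm; alternatively, the elementary symmetric bound applies to the complete homogeneous polynomials $h_r$, with $|h_r|\le(\sum|\beta_\ell|)^r\le (2t_{k,j})^r$. Either route gives polynomial growth $C_m'(1+t_{k,j})^{m-1}$.

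\textbf{Scalar factor.} Write $\lambda=a+ib$. A Taylor expansion gives
\[
\log|1-\alpha\lambda|=\tfrac12\log\bigl((1-\alpha a)^2+\alpha^2b^2\bigr)=-\alpha a+O(\alpha^2|\lambda|^2),
\]
and the $O(\cdot)$ constant is uniform because $\alpha|\lambda|\le1/2$. This is the main obstacle: we do not assume $\sum\alpha_\ell^2<\infty$, so the quadratic errors must not accumulate uncontrolled. The remedy is that $\alpha_\ell\to0$, so for any $\epsilon>0$ there is $\ell_\epsilon$ with $O(\alpha_\ell^2|\lambda|^2)\le\epsilon\alpha_\ell$ for all $\ell\ge\ell_\epsilon$. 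The finitely many indices $\ell<\ell_\epsilon$ contribute only a bounded multiplicative constant, since the ratio between each factor and $e^{\mp\text{rate}\cdot\alpha_\ell}$ is bounded above and below. This gives
\[
c_\epsilon\, e^{-(a+\epsilon)t_{k,j}}\le|p_{k,j}|\le C_\epsilon\, e^{(-a+\epsilon)t_{k,j}} .
\]

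\textbf{Combining the two estimates.}

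\emph{Case $\Re\lambda=a\ge0$.} Take $\epsilon=\kappa/2$. Then $|p_{k,j}|\le C e^{(\kappa/2)t_{k,j}}$, and the polynomial factor is absorbed via $(1+t)^{m-1}\le C''e^{(\kappa/2)t}$. This gives $D^+$.

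\emph{Case $a<0$.} Here $\Phi_J(k,j)^{-1}=p_{k,j}^{-1}\prod_\ell(I_m-\beta_\ell N)^{-1}$. Using the lower bound on $|p_{k,j}|$ with $\epsilon=|a|/4$ gives $|p_{k,j}|^{-1}\le c^{-1}e^{-(|a|-|a|/4)t_{k,j}}$. Absorbing the polynomial factor into $e^{(|a|/4)t}$ then yields the claim with $\nu=|a|/2$ and a suitable $D^-\ge1$.
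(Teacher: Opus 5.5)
Your proposal is correct and is essentially the paper's proof. You use the same factorization $I-\alpha_\ell J=(1-\alpha_\ell\lambda)(I_m-\beta_\ell N)$ with $|\beta_\ell|\le 2\alpha_\ell$. You use the same monomial count $|c_q|\le(\sum_{\ell}|\beta_\ell|)^q$ for both the forward and inverse nilpotent products. Your complete-homogeneous bound is the one to keep for the inverse: the product bound $\prod_\ell(1+\sum_r|\beta_\ell|^r\|N\|^r)$ you mention first only gives growth $e^{c\,t_{k,j}}$ with a fixed $c$, which could swamp $\nu$. Finally, you absorb the resulting polynomial factor into the exponential, as the paper does.

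The only real difference is the scalar step. Your uniform expansion $\log|1-\alpha\lambda|=-\alpha\Re(\lambda)+O(\alpha^2|\lambda|^2)$ yields any rate $\nu<|\Re(\lambda)|$. The paper instead uses $|1-\alpha\lambda|^2\le 1+\alpha^2|\lambda|^2$ when $\Re(\lambda)\ge0$ and $\log(1+y)\ge y\log 2$ when $\Re(\lambda)<0$. This gives only $\nu<\tfrac{\log 2}{2}|\Re(\lambda)|$, but it does not need $\alpha_\ell\to0$ in the unstable case.
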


\begin{proof}
Set $\beta_\ell:=\alpha_\ell/(1-\alpha_\ell\lambda)$.
Since $I-\alpha_\ell J=(1-\alpha_\ell\lambda)(I-\beta_\ell N)$, we have
\begin{equation}
    \label{eq:lem Jordan 0}
    \begin{aligned}
        \Phi_J(k,j)
&=\left[{\prod}_{\ell=j}^{k-1}(1-\alpha_\ell\lambda)\right]
{\prod}_{\ell=j}^{k-1}(I-\beta_\ell N)\qquad
\text{and} \\ 
\Phi_J(k,j)^{-1}
&=\left[{\prod}_{\ell=j}^{k-1}(1-\alpha_\ell\lambda)^{-1}\right]
{\prod}_{\ell=j}^{k-1}(I-\beta_\ell N)^{-1}.
    \end{aligned}
\end{equation}
Thus, the proof separates the scalar factors $|1-\alpha_\ell\lambda|$ from
the nilpotent products generated by $I-\beta_\ell N$. \\[-2mm]

\noindent\textbf{Step 1: Estimates of scalar factors.}
When $\Re(\lambda)\ge0$, it follows from $1+x \leq \exp(x)$ for $x\geq 0$ that
\begin{equation*}
|1-\alpha\lambda|^2
\le1+\alpha^2|\lambda|^2
\le \exp(\alpha^2|\lambda|^2) \qquad \Longrightarrow \qquad |1-\alpha\lambda|\le \exp(\alpha^2|\lambda|^2/2).    
\end{equation*}
Fix $\kappa>0$. Since
$\alpha_\ell\to0$, there exists $K_\kappa\ge0$ such that
$\alpha_\ell|\lambda|^2\le\kappa$ for all $\ell\ge K_\kappa$. Set
$B_\kappa:={\sum}_{\ell=0}^{K_\kappa-1}\alpha_\ell^2$, with
$B_\kappa=0$ if $K_\kappa=0$. Then, $
|\lambda|^2{\sum}_{\ell=j}^{k-1}\alpha_\ell^2
\le |\lambda|^2B_\kappa+\kappa t_{k,j}$. 
Hence, we obtain
\[
{\prod}_{\ell=j}^{k-1}|1-\alpha_\ell\lambda|
\le \exp\left(\frac{|\lambda|^2}{2}
{\sum}_{\ell=j}^{k-1}\alpha_\ell^2\right)
\le \exp\left(\frac{|\lambda|^2B_\kappa}{2}\right)
\exp\left(\frac{\kappa t_{k,j}}{2}\right).
\]
If $\Re(\lambda)<0$, set $\eta:=-\Re(\lambda)$ and choose
$\nu >0$ such that $2\nu <(\log2)\eta$. Since
$0\le\alpha \eta\le\alpha|\lambda|\le1/2$, concavity of $\log(1+y)$ on $[0,1]$
gives
\[
\log|1-\alpha\lambda|
\ge\frac12\log(1+2\alpha \eta)
\ge(\log2)\alpha \eta
\ge2\nu \alpha.
\]
Since 
$|1-\alpha_\ell\lambda| \geq 1- \alpha_\ell|\lambda|\ge1/2$, we have $|\beta_\ell|\le2\alpha_\ell$.
Consequently, for $k\ge j$,
\begin{equation}\label{eq:jordan-scalar-product-bound}
{\sum}_{\ell=j}^{k-1}|\beta_\ell|\le 2t_{k,j}\qquad \text{and} \qquad
\begin{cases}
{\prod}_{\ell=j}^{k-1}|1-\alpha_\ell\lambda|
\le \exp(|\lambda|^2B_\kappa/2)\exp(\kappa t_{k,j}/2),
& \Re(\lambda)\ge0,\\[2mm]
{\prod}_{\ell=j}^{k-1}|1-\alpha_\ell\lambda|^{-1}
\le \exp(-2\nu t_{k,j}),
& \Re(\lambda)<0.
\end{cases}
\end{equation}

\noindent\textbf{Step 2: Estimates of nilpotent products.} 
Since $N^m=0$, it holds that
\[
(I-\beta_\ell N){\sum}_{r=0}^{m-1}(\beta_\ell N)^r
=I-(\beta_\ell N)^m=I
\qquad\Longrightarrow\qquad
(I-\beta_\ell N)^{-1}={\sum}_{r=0}^{m-1}\beta_\ell^rN^r.
\]
Consider either ${\prod}_{\ell=j}^{k-1}(I-\beta_\ell N)$ or
${\prod}_{\ell=j}^{k-1}(I-\beta_\ell N)^{-1}$, and write it as
${\sum}_{q=0}^{m-1}c_qN^q$. Then $|c_0|=1$. For $1\le q<m$, the coefficient
$c_q$ is a sum of monomials in $\beta_j,\ldots,\beta_{k-1}$ of total degree
$q$, with repeated indices allowed in the inverse product. By the triangle
inequality,
\[
|c_q|
\le
{\sum}_{\ell_1=j}^{k-1}\cdots{\sum}_{\ell_q=j}^{k-1}
|\beta_{\ell_1}|\cdots|\beta_{\ell_q}|
=
\left({\sum}_{\ell=j}^{k-1}|\beta_\ell|\right)^q.
\]
Set $C_J:=\max_{0\le q<m}\|N^q\|$. Then
\[
\left\|{\sum}_{q=0}^{m-1}c_qN^q\right\|
\le {\sum}_{q=0}^{m-1}|c_q|\,\|N^q\|
\le C_J{\sum}_{q=0}^{m-1}
\left({\sum}_{\ell=j}^{k-1}|\beta_\ell|\right)^q
\le C_J\left(1+{\sum}_{\ell=j}^{k-1}|\beta_\ell|\right)^{m-1}.
\]
Applying this estimate to the forward and inverse products gives
\begin{equation}\label{eq:generic-jordan-nilpotent-factor}
\max\left\{
\left\|{\prod}_{\ell=j}^{k-1}(I-\beta_\ell N)\right\|,
\left\|{\prod}_{\ell=j}^{k-1}(I-\beta_\ell N)^{-1}\right\|
\right\}
\le C_J\left(1+{\sum}_{\ell=j}^{k-1}|\beta_\ell|\right)^{m-1}.
\end{equation}

\noindent\textbf{Step 3: Combining the scalar and nilpotent bounds.}
It follows from  
\eqref{eq:lem Jordan 0}--\eqref{eq:generic-jordan-nilpotent-factor} that
\begin{equation}\label{eq:generic-jordan-block-combined}
\begin{cases}
\|\Phi_J(k,j)\|
\le C_J\exp\left(\frac{|\lambda|^2B_\kappa}{2}\right)
(1+2t_{k,j})^{m-1}\exp(\kappa t_{k,j}/2),
& \Re(\lambda)\ge0,\\[2mm]
\|\Phi_J(k,j)^{-1}\|
\le C_J(1+2t_{k,j})^{m-1}\exp(-2\nu t_{k,j}),
& \Re(\lambda)<0.
\end{cases}
\end{equation}
Define
\[
D^+:=C_J\exp\Big(\tfrac{|\lambda|^2B_\kappa}{2}\Big)
\sup_{t\ge0}(1+2t)^{m-1}\exp(-\kappa t/2) \qquad \text{and} \qquad D^-:=C_J\sup_{t\ge0}(1+2t)^{m-1}\exp(-\nu t).
\]
Both constants are finite, since the exponential terms dominate the polynomial
factor. Hence, \eqref{eq:generic-jordan-block-combined} yields
$\|\Phi_J(k,j)\|\le D^+\exp(\kappa t_{k,j})$ when
$\Re(\lambda)\ge0$, and
$\|\Phi_J(k,j)^{-1}\|\le D^-\exp(-\nu t_{k,j})$ when
$\Re(\lambda)<0$.
\end{proof}

\section{Verification of the stochastic oracle condition}
\label{app:verification-stochastic-oracle}

\subsection{Martingale-type noise}
\label{app:martingale-type-noise}

\begin{proof}[Proof of \cref{lem:iid-moment-bounds-imply-stochastic-oracle}]
Since $b_k$ and $J_k$ are measurable functions of $\xi^k$, the sequence
$\{(b_k,J_k)\}_{k}$ is also i.i.d. 
Consider the partial sums
\[
B_n:={\sum}_{k=0}^{n}\alpha_k b_k, 
\]
which form a martingale with respect to the shifted natural filtration
$\{\cF_{n+1}^{\xi}\}_{n\ge0}$ in
\cref{def:probability-model}. 
By orthogonality of martingale differences, we have
\[
\Exp[\|B_n\|^2]
\le
{\sum}_{k=0}^{n}\alpha_k^2\Exp[\|b_0\|^2]
\le
\Exp[\|b_0\|^2]\cdot{\sum}_{k=0}^{\infty}\alpha_k^2<\infty.
\]
Here, ${\sum}_{k=0}^{\infty}\alpha_k^2<\infty$ follows from ${\sum}_{k=0}^{\infty}r_{k+1}^{q/2}<\infty$ and we use the fact that the errors $\{b_k\}_k$ are i.i.d., that is, $\Exp[\|b_k\|^2] = \Exp[\|b_0\|^2]$ for all $k$. Hence, the martingale convergence theorem gives $B_\infty$ such that $B_n\to B_\infty$ almost surely. Similarly, the process
\[
T_n:={\sum}_{i=0}^{n}\alpha_iJ_i,\qquad T_{-1}:=0,
\]
can be shown to be a martingale with respect to
$\{\cF_{n+1}^{\xi}\}_{n\ge0}$. More specifically, $\{T_n\}_n$ has uniformly bounded second moments as $q>2$ implies
$\Exp[\|J_0\|_F^2]<\infty$. Thus, $T_n$ converges almost surely to some $T_\infty$. Therefore, $S_k=T_\infty-T_{k-1}$ is well-defined and $S_k\to0$ almost surely. Define
\[
M_n:={\sum}_{i=1}^{n}\alpha_iJ_iB_{i-1}.
\]
Since $B_{n-1}$ is $\cF_n^\xi$-measurable, $J_n$ is independent of
$\cF_n^\xi$, and $\Exp[J_n]=0$, the process
$\{M_n\}_n$ is a martingale with respect to
$\{\cF_{n+1}^{\xi}\}_{n\ge0}$. Moreover,
\[
\Exp[\|M_n-M_{n-1}\|^2]
\le
\alpha_n^2 \Exp[\|J_0\|_F^2]\,\Exp[\|B_{n-1}\|^2]
\le
\alpha_n^2\Exp[\|J_0\|_F^2]\,\Exp[\|b_0\|^2] \cdot
{\sum}_{k=0}^{\infty}\alpha_k^2.
\]
Therefore, 
${\sum}_{n=1}^{\infty}\Exp\|M_n-M_{n-1}\|^2<\infty$, and $\{M_n\}_n$
converges almost surely. 

Using the identity $S_{k+1}={\sum}_{i=k+1}^{\infty}\alpha_iJ_i$, we can expand the term ${\sum}_{k=0}^{n}\alpha_kS_{k+1}b_k$ as follows:
\begin{equation}\label{eq:verify stochastic oracle 1}
	\begin{aligned}
	{\sum}_{k=0}^{n}\alpha_kS_{k+1}b_k &= {\sum}_{k=0}^{n}  {\sum}_{i=k+1}^{\infty}\alpha_k\alpha_i J_i b_k \\
	&= {\sum}_{k=0}^{n-1}{\sum}_{i=k+1}^{n}\alpha_k\alpha_iJ_i b_k+
{\sum}_{k=0}^{n}{\sum}_{i=n+1}^{\infty}\alpha_k\alpha_iJ_i b_k =M_n+S_{n+1}B_n,
\end{aligned}
\end{equation}
where the last equality follows from
\[
M_n
=
{\sum}_{i=1}^{n}\alpha_iJ_iB_{i-1}
=
{\sum}_{i=1}^{n} \alpha_iJ_i {\sum}_{k=0}^{i-1}\alpha_k b_k
=
{\sum}_{k=0}^{n-1}{\sum}_{i=k+1}^{n}\alpha_k\alpha_iJ_i b_k.
\]
Since the sequence $\{M_n\}_n$ converges, $S_{n+1}\to0$, and $B_n\to B_\infty$ almost
surely, \eqref{eq:verify stochastic oracle 1} implies that the process
$\{{\sum}_{k=0}^{n}\alpha_kS_{k+1}b_k\}_n$ converges almost surely.

It remains to prove $S_{k+1}J_k\to0$. For $N>k+1$, set
\[
R_{k,N}:={\sum}_{i=k+1}^{N}\alpha_iJ_i.
\]
For fixed $k$, the stopped sums
$\{R_{k,\ell}\}_{\ell=k+1}^{N}$ form a martingale with
respect to $\{\cF_{\ell+1}^{\xi}\}_{\ell=k+1}^{N}$. By the Burkholder--Davis--Gundy inequality
\cite{burdavgun72}, applied to the Frobenius vectorization, there exist
$C_q, C_q'<\infty$ independent of $k$ and $N$ such that
\begin{align*}
\bigl(\Exp[\|R_{k,N}\|_F^q]\bigr)^{1/q}
&\le
C_q
\left(
\Exp\left[\left({\sum}_{i=k+1}^{N}\alpha_i^2\|J_i\|_F^2\right)^{q/2}\right]
\right)^{1/q}\\
&\le
C_q
\left(
{\sum}_{i=k+1}^{N}
\alpha_i^2\bigl(\Exp[\|J_i\|_F^q]\bigr)^{2/q}
\right)^{1/2}
\le C_q^\prime\, r_{k+1}^{1/2},
\end{align*}
where the second inequality follows from $L^{q/2}$-Minkowski inequality and the last inequality uses the common $q$-th moment bound on $J_i$.
Consequently, we obtain $\Exp[\|R_{k,N}\|_F^q] \le (C_q^\prime)^q r_{k+1}^{q/2}$. 

Since ${\sum}_{i=0}^{\infty}\alpha_iJ_i$ converges almost surely,
we have $R_{k,N}\to S_{k+1}$ almost surely as $N\to\infty$. By Fatou's lemma, we deduce
\[
\Exp[\|S_{k+1}\|_F^q]
\le
{\liminf}_{N\to\infty}\,\Exp[\|R_{k,N}\|_F^q]
\le
(C_q^\prime)^q r_{k+1}^{q/2}.
\]
Using the independence of $S_{k+1}$ and $J_k$ and the bound
$\|AB\|_F\le\|A\|_F\|B\|_F$, we obtain
\[
\Exp[\|S_{k+1}J_k\|_F^q]
\le
\Exp[\|S_{k+1}\|_F^q]\,\Exp[\|J_k\|_F^q]
\le C_q^{\prime\prime} r_{k+1}^{q/2},\qquad \text{for some $C_q^{\prime\prime}<\infty$}.
\]
Markov's inequality and the summability of $r_{k+1}^{q/2}$ imply that, for
every $\varepsilon>0$,
\[
{\sum}_{k=0}^{\infty}\, 
\Prob\bigl(\|S_{k+1}J_k\|_F>\varepsilon\bigr)<\infty.
\]
Consequently, by Borel--Cantelli, we conclude that $\|S_{k+1}J_k\|_F\to0$ almost surely.
\end{proof}

\subsection{Sampling without replacement}
\label{app:sampling-without-replacement}

\begin{proof}[Proof of \cref{prop:without-replacement-implies-stochastic-oracle}]
The proof is divided into two parts.\\[-2mm]

\noindent\textbf{Part I: Equicontinuity.} Fix a compact set $\mathcal Q\subset\Rd$ and
$\bar\eta>0$, and let $\widehat{\mathcal Q}$ be the convex hull of
$\mathcal Q$. Since there are finitely many permutations and
$0\le\eta\le\bar\eta$ ranges over a compact interval, the inner iterates
\[
\{Z_s(z;\eta,\pi):z\in\widehat{\mathcal Q},\ 0\le\eta\le\bar\eta,\ 
s=0,\ldots,n,\ \pi \text{ is a permutation}\}
\]
lie in a compact convex set $\mathcal Q_+$. Let $M$ be a common bound on
$\|{\rm D}G_i\|$ and on the Lipschitz constants of ${\rm D}G_i$ on
$\mathcal Q_+$, for $i=1,\ldots,n$.

For fixed $(\eta,\pi)$, the chain rule gives
\begin{equation}\label{eq:prop sample w/o 0}
{\rm D}Z_s(z;\eta,\pi)
=
\bigl(I-\eta {\rm D}G_{\pi_s}(Z_{s-1}(z;\eta,\pi))\bigr)
{\rm D}Z_{s-1}(z;\eta,\pi) \qquad \text{with}
\qquad {\rm D}Z_0(z;\eta,\pi)=I.
\end{equation}
By induction, it holds that 
\begin{equation}\label{eq:prop sample w/o 0/1}
\|{\rm D}Z_s(z;\eta,\pi)\|\le(1+\bar\eta M)^s\qquad \text{for all}\quad z\in\widehat{\mathcal Q}.
\end{equation} 
Since the segment between any two points $z,w\in\mathcal Q$ is contained in
$\widehat{\mathcal Q}$,
\begin{equation}\label{eq:prop sample w/o 1}
    \begin{aligned}
   \|Z_s(z;\eta,\pi)-Z_s(w;\eta,\pi)\|
&=\left\|\int_0^1
{\rm D}Z_s\big(w+t(z-w);\eta,\pi\big)(z-w)\,{\rm d}t  \right\|\\
&\leq 
\int_0^1
\|{\rm D}Z_s\big(w+t(z-w);\eta,\pi\big)\|\,{\rm d} t \cdot \|z-w\| \leq
(1+\bar\eta M)^s\|z-w\|.
\end{aligned}
\end{equation}
We also need the same type of bound for the derivative of $Z_s(\cdot ;\eta,\pi)$. The case $s=0$ is
trivial. For $s\ge1$, subtracting the recursion
\eqref{eq:prop sample w/o 0} at $z$ and $w$ gives
\begin{align*}
&\quad \|{\rm D}Z_s(z;\eta,\pi)-{\rm D}Z_s(w;\eta,\pi)\|
\\ &\le
(1+\bar\eta M)
\|{\rm D}Z_{s-1}(z;\eta,\pi)-{\rm D}Z_{s-1}(w;\eta,\pi)\|\\
&\quad
+\eta
\|{\rm D}G_{\pi_s}(Z_{s-1}(z;\eta,\pi))
-{\rm D}G_{\pi_s}(Z_{s-1}(w;\eta,\pi))\|
\|{\rm D}Z_{s-1}(w;\eta,\pi)\|\\
&\overset{\text{(i)}}{\leq} (1+\bar\eta M)
\|{\rm D}Z_{s-1}(z;\eta,\pi)-{\rm D}Z_{s-1}(w;\eta,\pi)\| 
+M\eta (1+\bar\eta M)^{s-1}
\|Z_{s-1}(z;\eta,\pi) - Z_{s-1}(w;\eta,\pi)\|\\
&\overset{\text{(ii)}}{\leq} (1+\bar\eta M)
\|{\rm D}Z_{s-1}(z;\eta,\pi)-{\rm D}Z_{s-1}(w;\eta,\pi)\|
+M\eta (1+\bar\eta M)^{2s-2} \|z-w\|,
\end{align*}
where (i) uses the bound
\eqref{eq:prop sample w/o 0/1} and the
Lipschitz continuity of ${\rm D}G_{\pi_s}$ on $\mathcal Q_+$, and (ii) follows
from \eqref{eq:prop sample w/o 1}. Unfolding this recursion and using
$\eta\le\bar\eta$ yields
\begin{equation}\label{eq:prop sample w/o 2}
\|{\rm D}Z_s(z;\eta,\pi)-{\rm D}Z_s(w;\eta,\pi)\|
\le
M\bar\eta
{\sum}_{\ell=1}^{n}(1+\bar\eta M)^{n+\ell-2}\|z-w\|
=:C_M\|z-w\|.
\end{equation}
Now, we show the equicontinuity of
\[
    \{ {\rm D}G(\cdot;\eta,\pi):0<\eta\le \bar\eta,\ 
    \pi \text{ is a permutation of }\{1,\ldots,n\}\}.
\]
By the chain rule, $
{\rm D}G(z;\eta,\pi)
=
\frac1n{\sum}_{s=1}^n
{\rm D}G_{\pi_s}(Z_{s-1}(z;\eta,\pi)){\rm D}Z_{s-1}(z;\eta,\pi)$. 
For any $z,w\in\mathcal Q$, each summand satisfies
\begin{align*}
&\quad
\|{\rm D}G_{\pi_s}(Z_{s-1}(z;\eta,\pi)){\rm D}Z_{s-1}(z;\eta,\pi)
-
{\rm D}G_{\pi_s}(Z_{s-1}(w;\eta,\pi)){\rm D}Z_{s-1}(w;\eta,\pi)\|\\
&\le
\|{\rm D}G_{\pi_s}(Z_{s-1}(z;\eta,\pi))\|
\|{\rm D}Z_{s-1}(z;\eta,\pi)-{\rm D}Z_{s-1}(w;\eta,\pi)\|\\
&\qquad+
\|{\rm D}G_{\pi_s}(Z_{s-1}(z;\eta,\pi))
-{\rm D}G_{\pi_s}(Z_{s-1}(w;\eta,\pi))\|
\|{\rm D}Z_{s-1}(w;\eta,\pi)\|\\
&\le
\left[
MC_M+M(1+\bar\eta M)^{2s-2}
\right]\|z-w\|,
\end{align*}
where the last line holds due to $\|{\rm D}G_i(z)\|\leq M$ and the estimates \eqref{eq:prop sample w/o 0/1}--\eqref{eq:prop sample w/o 2}.
Therefore, ${\rm D}G(\cdot;\eta,\pi)$ is Lipschitz on $\mathcal Q$ with a
constant independent of $\eta$ and $\pi$. This proves the equicontinuity
claim.\\[-2mm]

\noindent \textbf{Part II: Stochastic oracle estimates.} 
Fix $\bar z\in\Rd$. The estimates below are evaluated at this candidate point and
do not involve the orbit generated by any algorithm. Choose $r>0$ and let
$\mathcal Q:=\{z:\|z-\bar z\|\le r\}$. Let $M$ be a common bound on
$\|G_i\|$, $\|{\rm D}G_i\|$, and the local Lipschitz constants of $G_i$ and
${\rm D}G_i$ on $\mathcal Q$, for all $i=1,\ldots,n$. Since $\eta_k\to0$, it
suffices to estimate all sufficiently large $k$, because this does not affect
the convergence of the series. In the subsequent analysis, we consider $k$
large enough such that 
\begin{equation}\label{eq:prop sample w/o 3}
    n\eta_kM\le r/2\qquad \text{and} \qquad 
(1+\eta_kM)^{n-1}\le2.
\end{equation}
We first show that the inner iterates remain in $\mathcal Q$. The claim is
trivial for $s=0$. If $Z_0^k(\bar z),\ldots,Z_{s-1}^k(\bar z)\in\mathcal Q$,
then the inner recursion gives $
Z_s^k(\bar z)-\bar z
=
-\eta_k{\sum}_{\ell=1}^{s}
G_{\pi_\ell^k}(Z_{\ell-1}^k(\bar z))$.
Hence,
\begin{equation}\label{eq:prop sample w/o 4}
\|Z_s^k(\bar z)-\bar z\|
\le \eta_k{\sum}_{\ell=1}^{s}
\|G_{\pi_\ell^k}(Z_{\ell-1}^k(\bar z))\|
\le n\eta_kM.
\end{equation}
By induction and the bound in \eqref{eq:prop sample w/o 3}, we have $\|Z_s^k(\bar z)-\bar z\|\leq r/2$, showing that $Z_s^k(\bar z)\in\mathcal Q$ for every $s=0,\ldots,n$. 

Since $\{\pi_1^k,\ldots,\pi_n^k\}=\{1,\ldots,n\}$, we have $
F(\bar z)=\frac1n{\sum}_{s=1}^nG_{\pi_s^k}(\bar z)$ and 
${\rm D}F(\bar z)=\frac1n{\sum}_{s=1}^n{\rm D}G_{\pi_s^k}(\bar z)$. 
Thus, by the definition of the epoch oracle,
\[
G(\bar z;\xi^k)-F(\bar z)
=
\frac1n{\sum}_{s=1}^n
\bigl[G_{\pi_s^k}(Z_{s-1}^k(\bar z))-G_{\pi_s^k}(\bar z)\bigr].
\]
The local Lipschitz continuity and the bound \eqref{eq:prop sample w/o 4} imply 
\begin{equation}
    \label{eq:prop sample-w/o-replace 1}
\|G(\bar z;\xi^k)-F(\bar z)\|
\le
\frac{M}{n}{\sum}_{s=1}^n\|Z_{s-1}^k(\bar z)-\bar z\|
\le
M^2\alpha_k \qquad \text{where}\quad \alpha_k:=n\eta_k.    
\end{equation}
For the derivative, the chain rule gives $
{\rm D}G(\bar z;\xi^k)
=
\frac1n{\sum}_{s=1}^{n}{\rm D}G_{\pi_s^k}(Z_{s-1}^k(\bar z))\,{\rm D}Z_{s-1}^k(\bar z)$. Hence, 
\begin{align}
\|{\rm D}G(\bar z;\xi^k)-{\rm D}F(\bar z)\|
&\leq 
\frac1n{\sum}_{s=1}^n\left[ 
\|{\rm D}G_{\pi_s^k}(Z_{s-1}^k(\bar z))-{\rm D}G_{\pi_s^k}(\bar z)\| + \|{\rm D}G_{\pi_s^k}(Z_{s-1}^k(\bar z))\|\|{\rm D}Z_{s-1}^k(\bar z)-I\|\right] \notag\\
 &\overset{\text{(i)}}{\leq}  \frac1n{\sum}_{s=1}^n \left[ M \|Z_{s-1}^k(\bar z) - \bar z\| + M\|{\rm D}Z_{s-1}^k(\bar z)-I\|\right]  \notag\\ \label{eq:prop sample-w/o-replace 2} &\overset{\text{(ii)}}{\leq}  M^2\alpha_k + M\cdot \max_{s=1,\ldots,n}\|{\rm D}Z_{s}^k(\bar z)-I\|,
\end{align}
where (i) uses the bound $\|{\rm D}G_i\|\le M$ and the Lipschitz continuity
of ${\rm D}G_i$ on $\mathcal Q$, while (ii) uses
\eqref{eq:prop sample w/o 4}. It remains to control the derivative term in
\eqref{eq:prop sample-w/o-replace 2}. Differentiating the inner recursion
gives, for $s=1,\ldots,n$,
\[
{\rm D}Z_s^k(\bar z)
=
[I-\eta_k{\rm D}G_{\pi_s^k}(Z_{s-1}^k(\bar z))]
{\rm D}Z_{s-1}^k(\bar z),
\qquad
{\rm D}Z_0^k(\bar z)=I.
\]
Since $Z_{s-1}^k(\bar z)\in \mathcal Q$, we have
$\|{\rm D}G_{\pi_s^k}(Z_{s-1}^k(\bar z))\|\le M$.
Therefore, $\|{\rm D}Z_s^k(\bar z)\|\le(1+\eta_kM)^s$. Moreover, we have for all $s=1,\ldots,n$,
\begin{align*}
\|{\rm D}Z_s^k(\bar z)-I\|
&= \|{\rm D}Z_s^k(\bar z)-{\rm D}Z_0^k(\bar z)\| \\
&\leq  {\sum}_{\ell=1}^{s}
\|{\rm D}Z_\ell^k(\bar z)-{\rm D}Z_{\ell-1}^k(\bar z)\| \le \eta_k {\sum}_{\ell=1}^{s}
\|{\rm D}G_{\pi_{\ell}^k}(Z_{\ell-1}^k(\bar z))\|\cdot \|{\rm D}Z_{\ell-1}^k(\bar z)\|\\
&\leq \eta_kM{\sum}_{\ell=1}^{s}(1+\eta_kM)^{\ell-1}
\le s\eta_kM(1+\eta_kM)^{n-1} \leq 2n\eta_kM,
\end{align*}
where the last inequality follows from \eqref{eq:prop sample w/o 3}. Combining this estimate with \eqref{eq:prop sample-w/o-replace 2}, we obtain 
\[
\|{\rm D}G(\bar z;\xi^k)-{\rm D}F(\bar z)\| \leq 3M^2 \alpha_k.
\]
Thus, for $C_{\bar z}:=4M^2$, the quantities
$b_k:=G(\bar z;\xi^k)-F(\bar z)$ and
$J_k:={\rm D}G(\bar z;\xi^k)-{\rm D}F(\bar z)$ satisfy
$\|b_k\|+\|J_k\|\le C_{\bar z}\alpha_k$. Hence,
${\sum}_{k=0}^\infty \alpha_kb_k$ and
${\sum}_{k=0}^\infty \alpha_kJ_k$ converge. In addition, with
$S_k:={\sum}_{i=k}^{\infty}\alpha_iJ_i$, we can deduce
\[
\|S_k\|\le C_{\bar z}{\sum}_{i=k}^{\infty}\alpha_i^2\to0.
\]
Since $\sum_{k=0}^\infty \alpha_k^2<\infty$ and $\|b_{k}\| = \cO(\alpha_k)$, the series ${\sum}_{k=0}^\infty \alpha_kS_{k+1}b_k$ converges absolutely. Finally,
\[
\|S_{k+1}J_k\|
\le
C_{\bar z}^2
\left({\sum}_{i=k+1}^{\infty}\alpha_i^2\right)
\alpha_k
\to0.
\]
As a result, the event $\cE_{\rm reg}(\bar z)$ in \cref{assumption:stochastic-oracle} can be taken as the whole seed space.
\end{proof}

\section{Proofs of technical lemmas}
\subsection{\texorpdfstring{Proof of \cref{lem:Lip of R_k}}{Proof of uniform Taylor remainder bound}}
\label{subsec:proof-Lip-of-R-k}
\begin{proof}
Recall that in \cref{subsec:shifted-dynamics} we translated $z^*$ to the origin and retained the notation $G$ for the maps $u\mapsto G(z^*+u;\xi)$. Let $\cU_{z^*}$ be the open neighborhood from \cref{assumption:F}. Choose $\bar\rho>0$ such that $\overline B_{2\bar\rho}(0)\subset\cU_{z^*}-z^*$ and define
\[
\Upsilon(r):=
\sup_{\xi\in\Xi}\sup_{\|u\|\le r}
\|{\rm D}G(u;\xi)-{\rm D}G(0;\xi)\|,
\qquad 0\le r\le2\bar\rho.
\]
We also extend $\Upsilon$ constantly to $[2\bar\rho,\infty)$. By local equicontinuity, $\Upsilon(r)<\infty$ for $r\le2\bar\rho$ and $\Upsilon(r)\to0$ as $r\to0$.
For any $w,z\in B_\rho(0)$ with
$\rho\in(0,2\bar\rho]$, we have
\begin{align}
\notag\|R_k(z)-R_k(w)\| &= \left\|\int_0^1 {\rm D}R_k(w+t(z-w)) (z-w) {\rm d}t\right\| \leq \|z-w\| \int_0^1 \|{\rm D}R_k(w+t(z-w))\|  {\rm d}t \\[1mm]
  \label{eq:Lipschitz R_k} &\leq \|z-w\| \cdot \sup_{\|u\|\leq \rho}\|{\rm D}R_k(u)\|  = \|z-w\| \cdot \sup_{\|u\|\leq \rho}\|{\rm D}g_k(u)-{\rm D}g_k(0)\|.
\end{align}
Moreover, for every $\|z\|\le\rho$, it holds that
\[
\|{\rm D}R_k(z)\|
=\|{\rm D}G(z;\xi^k(\omega_\xi))-{\rm D}G(0;\xi^k(\omega_\xi))\|
\le\Upsilon(\rho).
\]
Combining this estimate and the definition of $\Upsilon$ with \eqref{eq:Lipschitz R_k} completes the proof.
\end{proof}

\subsection{\texorpdfstring{Proof of \cref{lem:psi_properties}}{Proof of local properties of the residual term}}
\label{subsec:proof-psi-properties}
\begin{proof}  
By \eqref{eq:U_k bounds}, we have $\|U_k\|\leq \frac32$ and $\|U_k^{-1}\|\leq 2$ for all $k\in\N$. It follows from the triangle inequality that
\begin{equation}\label{eq:tilde R_k 1}
\begin{aligned}
        \|\widetilde{R}_k(y) - \widetilde{R}_k(w)\| &\leq \|U_{k+1}\| \left\|R_k(U_k^{-1}y) - R_k(U_k^{-1}w) \right\| + \|B_k\| \left\|U_k^{-1}\right\| \|y-w\|\\
        &\leq \frac32\left\|R_k(U_k^{-1}y) - R_k(U_k^{-1}w) \right\| + 2 \|B_k\| \|y-w\|.
\end{aligned}
\end{equation}
First, we upper bound the term $\|R_k(U_k^{-1}y) - R_k(U_k^{-1}w)\|$. For any $w,y\in B_\rho(0)$ with $\rho\in(0,\bar\rho]$, since $\|U_k^{-1}\|\leq 2$, we have $U_k^{-1}w,U_k^{-1}y\in B_{2\rho}(0)$. As $\rho\leq \bar\rho$, \cref{lem:Lip of R_k} applies on $B_{2\rho}(0)$.
\begin{equation}\label{eq:tilde R_k 2}
\|R_k(U_k^{-1}y) - R_k(U_k^{-1}w)\| \leq \Upsilon(2\rho)\|U_k^{-1}\|\|y-w\| \leq 2\Upsilon(2\rho)\|y-w\|.
\end{equation}
Based on the definition of $B_k$ in \eqref{eq:def B_k}, we have
\begin{equation}\label{eq:tilde R_k 3}
\|B_k\| \leq \|S_{k+1}\|\|H\| + \|S_{k+1}J_k\| + \|H\| \|S_k\|.
\end{equation}
Merging \eqref{eq:tilde R_k 2}--\eqref{eq:tilde R_k 3} into \eqref{eq:tilde R_k 1} leads to the desired result.
\end{proof}

\subsection{\texorpdfstring{Proof of \cref{lem:growth_rates_nonsym}}{Proof of transition bounds on spectral subspaces}}
\label{subsec:proof-growth-rates-nonsym}
\begin{proof}
\textbf{Step 1: Spectral splitting.} Let $\Lambda_u:=\{\lambda\in\operatorname{spec}(H):\Re(\lambda)<0\}$ and $\Lambda_{\cs}:=\{\lambda\in\operatorname{spec}(H):\Re(\lambda)\ge0\}$. Since $H$ is real, its nonreal eigenvalues appear in conjugate pairs, which have the same real part. The real Jordan form for real matrices \cite[Section 3.4]{HornJohnson2012} allows the real blocks to be grouped by $\Lambda_u$ and $\Lambda_{\cs}$. Thus, there is a real nonsingular matrix $Q$ such that
\[
Q^{-1}HQ=\diag(H_u,H_{\cs})\qquad \text{and} \qquad
\operatorname{spec}(H_u)=\Lambda_u,\quad
\operatorname{spec}(H_{\cs})=\Lambda_{\cs}.
\]
The same $Q$ block diagonalizes every $I-\alpha_\ell H$ and hence every transition product. To distinguish the coordinate blocks from the full-space projected operators in \eqref{eq:restricted-transition-operators}, define, for $k\ge j$,
\[
\widehat\Phi_u^+(k,j):={\prod}_{\ell=j}^{k-1}(I-\alpha_\ell H_u)
\qquad \text{and}\qquad 
\widehat\Phi_{\cs}^+(k,j):={\prod}_{\ell=j}^{k-1}(I-\alpha_\ell H_{\cs}),
\]
with the convention that an empty product is the identity, and the inverse transition operators as $\widehat\Phi_u^-(k,j):=\bigl(\widehat\Phi_u^+(k,j)\bigr)^{-1}$, $\widehat\Phi_\cs^-(k,j):=\bigl(\widehat\Phi_\cs^+(k,j)\bigr)^{-1}$. Then, it holds that
\begin{equation}\label{eq:spectral-block-transition}
Q^{-1}\Phi^\pm(k,j)Q
= \diag(\widehat\Phi_u^\pm(k,j), \widehat\Phi_{\cs}^\pm(k,j))
\end{equation}
\textbf{Step 2: Jordan formulation.} 
After the real splitting is fixed as \eqref{eq:spectral-block-transition}, we complexify the blocks \(H_u\) and
\(H_{\cs}\) and apply the complex Jordan form inside each block.
By the Jordan canonical form theorem \cite[Theorem 3.1.5]{HornJohnson2012}, we may choose fixed complex nonsingular matrices $Z_u,Z_{\cs}$ such that
\begin{equation}\label{eq:jordan-block-reduction}
Z_u^{-1}H_uZ_u
=J_u:=\diag\bigl(J_{u,1},\ldots,J_{u,p}\bigr) \qquad \text{and} \qquad 
Z_{\cs}^{-1}H_{\cs}Z_{\cs}
=J_{\cs}:=\diag\bigl(J_{\cs,1},\ldots,J_{\cs,q}\bigr),
\end{equation}
where the individual Jordan blocks are
\begin{align*}
    J_{u,a}&:=\lambda_{u,a}I_{m_{u,a}}+N_{u,a},
\qquad
N_{u,a}^{m_{u,a}}=0,
\qquad
\lambda_{u,a}\in\Lambda_u,
\qquad a=1,\ldots,p,\qquad \text{and}\\
J_{\cs,b}&:=\lambda_{\cs,b}I_{m_{\cs,b}}+N_{\cs,b},
\qquad
N_{\cs,b}^{m_{\cs,b}}=0,
\qquad
\lambda_{\cs,b}\in\Lambda_{\cs},
\qquad b=1,\ldots,q.
\end{align*}
The block sizes $m_{u,a}$, $m_{\cs,b}$, and the nilpotent matrices
$N_{u,a}$, $N_{\cs,b}$, may vary from block to block.
For $k\ge j$, the same similarities give
\begin{equation}\label{eq:jordan-transition-blocks}
Z_u^{-1}\widehat\Phi_u^+(k,j)Z_u={\prod}_{\ell=j}^{k-1}(I-\alpha_\ell J_u)
\qquad \text{and} \qquad
Z_{\cs}^{-1}\widehat\Phi_{\cs}^+(k,j)Z_{\cs}={\prod}_{\ell=j}^{k-1}(I-\alpha_\ell J_{\cs}).
\end{equation}
The inverses of the two identities give the corresponding formulas for $\widehat\Phi_u^-(k,j)$ and $\widehat\Phi_\cs^-(k,j)$. 

\smallskip
\noindent\textbf{Step 3: Generic Jordan block estimate.}
Since $\alpha_\ell\|H\|\le1/2$ and every eigenvalue of a Jordan block in
\eqref{eq:jordan-block-reduction} has modulus at most $\|H\|$,
\cref{lem:generic-jordan-block-transition} applies to every block $J_{u,a}$
and $J_{\cs,b}$. For each unstable block $J_{u,a}$ with \(\Re(\lambda_{u,a})<0\), 
\cref{lem:generic-jordan-block-transition} gives a rate $\nu_{u,a}>0$. Since
there are finitely many unstable blocks, the bounds hold with
$\nu:=\min_{1\le a\le p}\nu_{u,a}$. That is, for each
unstable block $J_{u,a}$, there is a constant $D_{u,a}^-\ge1$ such that
\[
\left\|\left({\prod}_{\ell=j}^{k-1}(I-\alpha_\ell J_{u,a})\right)^{-1}\right\|
\le D_{u,a}^-\exp(-\nu t_{k,j}),
\qquad k\ge j,
\] 
Choose $0<\kappa<\nu$. For each
center-stable block $J_{\cs,b}$ with \(\Re(\lambda_{\cs,b})\ge0\), \cref{lem:generic-jordan-block-transition} yields a constant $D_{\cs,b}^+\ge1$ such that
\[
\left\|{\prod}_{\ell=j}^{k-1}(I-\alpha_\ell J_{\cs,b})\right\|
\le D_{\cs,b}^+\exp(\kappa t_{k,j}),
\qquad k\ge j.
\]

\smallskip
\noindent\textbf{Step 4: Back to spectral subspaces.}
Define $D_H:=\max\{\max_{1\le a\le p} D_{u,a}^-,
\max_{1\le b\le q} D_{\cs,b}^+\}$ and 
\[
C_H:=D_H\max\{1,\ \|Q\|\|Q^{-1}\|\|Z_u\|\|Z_u^{-1}\|,
\ \|Q\|\|Q^{-1}\|\|Z_{\cs}\|\|Z_{\cs}^{-1}\|\}.
\]
From the diagonal formulation \eqref{eq:spectral-block-transition} and the definitions of
$\Phi_u^{-}(k,j)$ and $\Phi_{\cs}^{+}(k,j)$, we have
\[
Q^{-1}\Phi_u^{-}(k,j)Q=\diag\bigl((\widehat\Phi_u^+(k,j))^{-1},0\bigr)
\qquad \text{and} \qquad
Q^{-1}\Phi_{\cs}^{+}(k,j)Q=\diag(0,\widehat\Phi_{\cs}^+(k,j)).
\]
By \eqref{eq:jordan-transition-blocks} and the definition of $C_H$,
$\|\Phi_u^{-}(k,j)\|\le C_H\exp(-\nu t_{k,j})$ and
$\|\Phi_{\cs}^{+}(k,j)\|\le C_H\exp(\kappa t_{k,j})$ for all $k\ge j$.
\end{proof}

\subsection{\texorpdfstring{Proof of \cref{lem:global_psi_property}}{Proof of global Lipschitzness of truncated residuals}}
\label{subsec:proof-global-psi-property}
\begin{proof}
If $y,w\notin B_{2\delta}(0)$, then both truncated residuals are zero.
Suppose first that $y,w\in B_{2\delta}(0)$. Since $\widetilde R_k(0)=0$, we
have $\|\widetilde R_k(z)\|\le2\delta\ell_\delta$ on $B_{2\delta}(0)$, and
the triangle inequality gives
\[
\begin{aligned}
\|R_k^\delta(y)-R_k^\delta(w)\|
&\le \|\chi(y)(\widetilde R_k(y)-\widetilde R_k(w))\|
     +|\chi(y)-\chi(w)|\,\|\widetilde R_k(w)\|\\
&\le \ell_\delta\|y-w\|+(c_\chi/\delta)\|y-w\|\,(2\delta\ell_\delta).
\end{aligned}
\]
If $y\in B_{2\delta}(0)$ and $w\notin B_{2\delta}(0)$, then $\chi(w)=0$ and
\[
\|R_k^\delta(y)-R_k^\delta(w)\|
\le |\chi(y)-\chi(w)|\,\|\widetilde R_k(y)\|
\le 2c_\chi\ell_\delta\|y-w\|.
\]
The remaining case follows by symmetry.
\end{proof}

\subsection{\texorpdfstring{Proof of \cref{lem:well_define}}{Proof of well-definedness}}
\label{subsec: proof of well_define}
\begin{proof}
Fix $\omega_\xi\in\cE_{\rm reg}^*$. Since $\widetilde b^k=(I-S_{k+1})b_k$,
\cref{assumption:stochastic-oracle} implies that the tails
\[
\widetilde B_k:={\sum}_{i=k}^{\infty}\alpha_i\widetilde b^i \quad \text{satisfy}\quad C_{\widetilde b}:=\sup_{k\in\N}\|\widetilde B_k\|<\infty.
\] 
By the growth bounds in \cref{lem:growth_rates_nonsym}, i.e., for every $k\geq j$
\begin{equation}\label{eq:LP-transition-bounds}
\|\Phi_{\cs}^{+}(k,j)\|\le C_H\exp(\kappa t_{k,j})
\qquad \text{and} \qquad
\|\Phi_u^{-}(k,j)\|\le C_H\exp(-\nu t_{k,j}),
\end{equation}
the initial term satisfies
\[
\sup_{k\in\N}\; \exp(-\theta t_k)\|\Phi_{\cs}^{+}(k,0)\zeta\|
\le C_H\|\zeta\|\qquad \text{and} \qquad \kappa<\theta.
\]
Consider the center-stable convolution. Since
$\alpha_j\widetilde b^j=\widetilde B_j-\widetilde B_{j+1}$, summation by parts gives
\[
{\sum}_{j=0}^{k-1}\alpha_j\Phi_{\cs}^{+}(k,j+1)\widetilde b^j
=\Phi_{\cs}^{+}(k,1)\widetilde B_0-\Phi_{\cs}^{+}(k,k)\widetilde B_k 
+{\sum}_{j=1}^{k-1}
\bigl(\Phi_{\cs}^{+}(k,j+1)-\Phi_{\cs}^{+}(k,j)\bigr)\widetilde B_j.
\]
Since $
\Phi_{\cs}^{+}(k,j+1)-\Phi_{\cs}^{+}(k,j)
=\alpha_j\Phi_{\cs}^{+}(k,j+1)HP_{\cs}$, 
the triangle inequality and \eqref{eq:LP-transition-bounds} yield
\[
\sup_{k\in\N}\;\exp(-\theta t_k)
\Bigl\|{\sum}_{j=0}^{k-1}\alpha_j\Phi_{\cs}^{+}(k,j+1)\widetilde b^j\Bigr\|
\le
C_{\widetilde b}C_H\left(2+\frac{\|HP_{\cs}\|}{\kappa}\right).
\]
For the unstable tail convolution, summation by parts gives
\[
{\sum}_{j=k}^{\infty}\alpha_j\Phi_u^{-}(j+1,k)\widetilde b^j
=\Phi_u^{-}(k+1,k)\widetilde B_k
+{\sum}_{j=k+1}^{\infty}
\bigl(\Phi_u^{-}(j+1,k)-\Phi_u^{-}(j,k)\bigr)\widetilde B_j.
\]
Note also that $\Phi_u^{-}(j+1,k)-\Phi_u^{-}(j,k)=\alpha_j\Phi_u^{-}(j+1,k)HP_u$. 
Using \eqref{eq:LP-transition-bounds} and
${\sum}_{j=k+1}^{\infty}\alpha_j\exp(-\nu t_{j+1,k})\le1/\nu$, we obtain
\[
\sup_{k\in\N}\exp(-\theta t_k)
\Bigl\|{\sum}_{j=k}^{\infty}\alpha_j\Phi_u^{-}(j+1,k)\widetilde b^j\Bigr\|
\le
C_{\widetilde b}C_H\left(1+\frac{\|HP_u\|}{\nu}\right).
\]
Thus, the affine terms in \eqref{eq:LP} are bounded in $\|\cdot\|_\theta$.

It remains to bound the nonlinear convolutions. If $\by\in\cY_\theta$, then $
\|R_j^\delta(y^j)\|\le L_\delta\|y^j\|
\le L_\delta\|\by\|_\theta\exp(\theta t_j)$. 
By \eqref{eq:LP-transition-bounds}, we can bound
\begin{align*}
\exp(-\theta t_k)
\Bigl\|{\sum}_{j=0}^{k-1}\alpha_j\Phi_{\cs}^{+}(k,j+1)R_j^\delta(y^j)\Bigr\| 
&\le C_HL_\delta\|\by\|_\theta
{\sum}_{j=0}^{k-1}\alpha_j
\exp\bigl(-\theta t_k+\kappa t_{k,j+1}+\theta t_j\bigr)\\
&\le C_HL_\delta\|\by\|_\theta
{\sum}_{j=0}^{k-1}\alpha_j\exp\bigl(-(\theta-\kappa)t_{k,j}\bigr)
\le \frac{C_HL_\delta}{\theta-\kappa}\|\by\|_\theta.
\end{align*}
The same argument on the unstable tail gives
\[
\begin{aligned}
&\quad \exp(-\theta t_k)
\Bigl\|{\sum}_{j=k}^{\infty}\alpha_j\Phi_u^{-}(j+1,k)R_j^\delta(y^j)\Bigr\|\\
&\le C_HL_\delta\|\by\|_\theta
{\sum}_{j=k}^{\infty}\alpha_j
\exp\!\bigl(-\nu t_{j+1,k}+\theta t_{j,k}\bigr)
\le \frac{C_HL_\delta}{\nu-\theta}\|\by\|_\theta.
\end{aligned}
\]
The two nonlinear convolution sequences are bounded in $\|\cdot\|_\theta$. Together with the estimates above, this proves $\cT_\zeta\by\in\cY_\theta$.
\end{proof}

\subsection{\texorpdfstring{Proof of \cref{lem:LP_contraction}}{Proof of contraction}}
\label{subsec: proof of LP_contraction}
\begin{proof}
Fix $\by,\bw\in\cY_\theta$. Since $R_j^\delta$ is $L_\delta$-Lipschitz and
$\|y^j-w^j\|\le \exp(\theta t_j)\|\by-\bw\|_\theta$, we have
\[
\|R_j^\delta(y^j)-R_j^\delta(w^j)\|
\le L_\delta\exp(\theta t_j)\|\by-\bw\|_\theta.
\]
The affine terms in \eqref{eq:LP} do not depend on $\by$ and therefore cancel.
Thus, for the center-stable component,
\[
(\cT_\zeta\by-\cT_\zeta\bw)_{\cs}^k
=-{\sum}_{j=0}^{k-1}\alpha_j
\Phi_{\cs}^{+}(k,j+1)
\bigl(R_j^\delta(y^j)-R_j^\delta(w^j)\bigr).
\]
By the bound $\|\Phi_{\cs}^{+}(k,j)\|\le C_H\exp(\kappa t_{k,j})$ (see \eqref{eq:LP-transition-bounds}), we obtain
\[
\begin{aligned}
\exp(-\theta t_k)\|(\cT_\zeta\by-\cT_\zeta\bw)_{\cs}^k\| &\le C_HL_\delta\|\by-\bw\|_\theta
{\sum}_{j=0}^{k-1}\alpha_j
\exp\bigl(-\theta t_k+\kappa t_{k,j+1}+\theta t_j\bigr)\\
&\le C_HL_\delta\|\by-\bw\|_\theta
{\sum}_{j=0}^{k-1}\alpha_j
\exp\bigl(-(\theta-\kappa)t_{k,j}\bigr).
\end{aligned}
\]
By summing over the intervals
$[t_{k,j+1},t_{k,j}]$ and using $t_{k,j}=t_{k,j+1}+\alpha_j$, we have
\[
{\sum}_{j=0}^{k-1}\alpha_j\exp\bigl(-(\theta-\kappa)t_{k,j}\bigr)
\le \int_{0}^{t_k}\exp\bigl(-(\theta-\kappa)s\bigr)\,ds
\le \frac{1}{\theta-\kappa}.
\]
Hence, it holds that
\begin{equation}\label{eq:lem contraction 1}
\exp(-\theta t_k)\|(\cT_\zeta\by-\cT_\zeta\bw)_{\cs}^k\|
\le \frac{C_HL_\delta}{\theta-\kappa}\|\by-\bw\|_\theta.
\end{equation}
For the unstable component, the cancellation of the affine terms gives
\[
(\cT_\zeta\by-\cT_\zeta\bw)_u^k
={\sum}_{j=k}^{\infty}\alpha_j
\Phi_u^{-}(j+1,k)
\bigl(R_j^\delta(y^j)-R_j^\delta(w^j)\bigr).
\]
Based on $\|\Phi_u^{-}(j+1,k)\|\le C_H\exp(-\nu t_{j+1,k})$ (see \eqref{eq:LP-transition-bounds}), we obtain 
\begin{align*}
\exp(-\theta t_k)\|(\cT_\zeta\by-\cT_\zeta\bw)_u^k\| 
&\le C_HL_\delta\|\by-\bw\|_\theta
{\sum}_{j=k}^{\infty}\alpha_j
\exp\bigl(-\theta t_k-\nu t_{j+1,k}+\theta t_j\bigr)\\
&=C_HL_\delta\|\by-\bw\|_\theta
{\sum}_{j=k}^{\infty}\alpha_j
\exp\bigl(-(\nu-\theta)t_{j,k}-\nu\alpha_j\bigr).
\end{align*}
Since $\nu>\nu-\theta>0$, we have $
\exp\bigl(-(\nu-\theta)t_{j,k}-\nu\alpha_j\bigr)
\leq
\exp\bigl(-(\nu-\theta)t_{j+1,k}\bigr)$. 
Therefore,
\[
{\sum}_{j=k}^{\infty}\alpha_j
\exp\bigl(-\theta t_k-\nu t_{j+1,k}+\theta t_j\bigr)
\le
{\sum}_{j=k}^{\infty}\alpha_j\exp\bigl(-(\nu-\theta)t_{j+1,k}\bigr)
\le \int_{0}^{\infty}\exp\bigl(-(\nu-\theta)s\bigr)\,ds
=\frac{1}{\nu-\theta}.
\]
The second inequality follows from the interval comparison over
$[t_{j,k},t_{j+1,k}]$.
Hence,
\begin{equation}\label{eq:lem contraction 2}
\exp(-\theta t_k)\|(\cT_\zeta\by-\cT_\zeta\bw)_u^k\|
\le \frac{C_HL_\delta}{\nu-\theta}\|\by-\bw\|_\theta.
\end{equation}
Combining \eqref{eq:lem contraction 1}--\eqref{eq:lem contraction 2} and using the triangle inequality, we obtain
\begin{equation}\label{eq:lem contraction 3}
\exp(-\theta t_k)\|(\cT_\zeta\by-\cT_\zeta\bw)^k\|
\le
C_HL_\delta\left(\frac{1}{\theta-\kappa}
+\frac{1}{\nu-\theta}\right)\|\by-\bw\|_\theta.
\end{equation}
Taking the supremum of \eqref{eq:lem contraction 3} over $k\in\N$ gives \eqref{eq:def q}.
\end{proof}

\subsection{\texorpdfstring{Proof of \cref{claim:Lipschitz-h}}{Proof of Lipschitz continuity of the graph map}}
\label{subsec: proof of Lipschitz h}
\begin{proof}
Fix $\zeta_1,\zeta_2\in E_{\cs}$, and let $\by_{\zeta_i}$ be the fixed point of $\cT_{\zeta_i}$. Then
\[
\by_{\zeta_1}-\by_{\zeta_2}
=\bigl[\cT_{\zeta_1}(\by_{\zeta_1})-\cT_{\zeta_1}(\by_{\zeta_2})\bigr]
+\bigl[\cT_{\zeta_1}(\by_{\zeta_2})-\cT_{\zeta_2}(\by_{\zeta_2})\bigr].
\]
Using \cref{lem:LP_contraction} and rearranging,
\[
(1-q)\|\by_{\zeta_1}-\by_{\zeta_2}\|_\theta
\le \|\cT_{\zeta_1}(\by_{\zeta_2})-\cT_{\zeta_2}(\by_{\zeta_2})\|_\theta.
\]
The two operators differ only in the initial center-stable term, so \eqref{eq:LP-transition-bounds} gives
\[
\|\cT_{\zeta_1}(\by_{\zeta_2})-\cT_{\zeta_2}(\by_{\zeta_2})\|_\theta
\le \sup_{k\in\N}\exp(-\theta t_k)C_H\exp(\kappa t_k)\|\zeta_1-\zeta_2\|
\le C_H\|\zeta_1-\zeta_2\|.
\] 
Since $h(\zeta)=P_u y_\zeta^0$, we have $\|h(\zeta_1)-h(\zeta_2)\|
\le \|P_u\|\,\|\by_{\zeta_1}-\by_{\zeta_2}\|_\theta
\le \frac{C_H\|P_u\|}{1-q}\|\zeta_1-\zeta_2\|$.
\end{proof}

\bibliographystyle{siam}
\bibliography{references}
\end{document}

%% file: config.tex
\usepackage[applemac]{inputenc} 		% allow utf-8 input
\usepackage[T1]{fontenc}    		% use 8-bit T1 fonts
\usepackage[colorlinks]{hyperref}      % hyperlinks
\usepackage{url}            			% simple URL typesetting
\usepackage{amsthm}
\usepackage{booktabs}       		% professional-quality tables
\usepackage{amsfonts}       		% blackboard math symbols
\usepackage{amsmath}
\usepackage{aliascnt}

\usepackage{nicefrac}       		% compact symbols for 1/2, etc.
\usepackage{microtype}      		% microtypography
\usepackage{lipsum}				% Can be removed after putting your text content
\usepackage[square,numbers]{natbib}
\usepackage{mathtools}
\usepackage{algorithm}
\usepackage{algorithmicx}
\usepackage{algpseudocode}
\makeatletter
\providecommand*{\theHALG@line}{}
\renewcommand*{\theHALG@line}{\thealgorithm.\arabic{ALG@line}}
\makeatother
\usepackage{graphicx}
\usepackage{indentfirst,latexsym,bm}
\usepackage{amsmath}
\usepackage{subfigure}
\usepackage{amssymb}
\usepackage{xcolor}
\usepackage{comment}
\usepackage{enumitem}
\usepackage{bbm}
\usepackage{tikz}
\usepackage{mdframed}
\usepackage{nicematrix}
\usepackage{bbding}
\usepackage{pifont}

\usetikzlibrary{arrows.meta,positioning}
\usepackage{pgfplots}
\pgfplotsset{compat=newest}
\usepgfplotslibrary{fillbetween}
\usetikzlibrary{shapes,decorations}
\usetikzlibrary{fit}

\colorlet{color1}{blue}
\colorlet{color2}{red!50!black}

\definecolor{ivory}{RGB}{218,215,203}

\definecolor{cuhkp}{RGB}{98,56,105} 	% purple dark
\definecolor{cuhkpl}{RGB}{152,24,147} 	% purple light
\definecolor{cuhkb}{RGB}{219,160,1} 	% ocher
\definecolor{cuhkbd}{RGB}{178,129,0} 	% ocher dark
\definecolor{cuhkr}{RGB}{88,35,155}  	% magenta-red
\definecolor{blackp}{RGB}{0,0,0} 
\definecolor{redp}{RGB}{255,0,0}
\definecolor{orangep}{RGB}{255,128,0}
\definecolor{brownp}{RGB}{128,77,0}
\definecolor{yellowp}{RGB}{255,230,0}
\definecolor{greenp}{RGB}{128,230,0}
\definecolor{bluep}{RGB}{0,128,255}
\definecolor{purplep}{RGB}{152,24,147}
\definecolor{pinkp}{RGB}{230,0,128}
\definecolor{lavender}{rgb}{0.9, 0.9, 0.98}

\usepackage{hyperref}[6.83]

\hypersetup{
    colorlinks=true, 
    linkcolor=blue,  
    citecolor=blue, 
    urlcolor=magenta 
}

\RequirePackage[capitalize,nameinlink]{cleveref}

\crefformat{equation}{\textup{#2(#1)#3}}
\crefrangeformat{equation}{\textup{#3(#1)#4--#5(#2)#6}}
\crefmultiformat{equation}{\textup{#2(#1)#3}}{ and \textup{#2(#1)#3}}
{, \textup{#2(#1)#3}}{, and \textup{#2(#1)#3}}
\crefrangemultiformat{equation}{\textup{#3(#1)#4--#5(#2)#6}}%
{ and \textup{#3(#1)#4--#5(#2)#6}}{, \textup{#3(#1)#4--#5(#2)#6}}{, and \textup{#3(#1)#4--#5(#2)#6}}

\Crefformat{equation}{#2Equation~\textup{(#1)}#3}
\Crefrangeformat{equation}{Equations~\textup{#3(#1)#4--#5(#2)#6}}
\Crefmultiformat{equation}{Equations~\textup{#2(#1)#3}}{ and \textup{#2(#1)#3}}
{, \textup{#2(#1)#3}}{, and \textup{#2(#1)#3}}
\Crefrangemultiformat{equation}{Equations~\textup{#3(#1)#4--#5(#2)#6}}%
{ and \textup{#3(#1)#4--#5(#2)#6}}{, \textup{#3(#1)#4--#5(#2)#6}}{, and \textup{#3(#1)#4--#5(#2)#6}}

\crefdefaultlabelformat{#2\textup{#1}#3}

\theoremstyle{plain}
\newtheorem{theorem}{Theorem}[section]

\newaliascnt{lemma}{theorem}
\newtheorem{lemma}[lemma]{Lemma}
\aliascntresetthe{lemma}

\newaliascnt{corollary}{theorem}
\newtheorem{corollary}[corollary]{Corollary}
\aliascntresetthe{corollary}

\newaliascnt{proposition}{theorem}
\newtheorem{proposition}[proposition]{Proposition}
\aliascntresetthe{proposition}

\newaliascnt{claim}{theorem}
\newtheorem{claim}[claim]{Claim}
\aliascntresetthe{claim}

\theoremstyle{remark}
\newaliascnt{remark}{theorem}
\newtheorem{remark}[remark]{Remark}
\aliascntresetthe{remark}

\theoremstyle{definition}
\newaliascnt{assumption}{theorem}
\newtheorem{assumption}[assumption]{Assumption}
\aliascntresetthe{assumption}

\newaliascnt{definition}{theorem}
\newtheorem{definition}[definition]{Definition}
\aliascntresetthe{definition}

\newaliascnt{example}{theorem}
\newtheorem{example}[example]{Example}
\aliascntresetthe{example}

\crefname{theorem}{Theorem}{Theorems}
\Crefname{theorem}{Theorem}{Theorems}

\crefname{lemma}{Lemma}{Lemmas}
\Crefname{lemma}{Lemma}{Lemmas}

\crefname{corollary}{Corollary}{Corollaries}
\Crefname{corollary}{Corollary}{Corollaries}

\crefname{proposition}{Proposition}{Propositions}
\Crefname{proposition}{Proposition}{Propositions}

\crefname{claim}{Claim}{Claims}
\Crefname{claim}{Claim}{Claims}

\crefname{remark}{Remark}{Remarks}
\Crefname{remark}{Remark}{Remarks}

\crefname{assumption}{Assumption}{Assumptions}
\Crefname{assumption}{Assumption}{Assumptions}

\crefname{definition}{Definition}{Definitions}
\Crefname{definition}{Definition}{Definitions}

\crefname{example}{Example}{Examples}
\Crefname{example}{Example}{Examples}

\DeclareMathOperator*{\argmin}{argmin}

\newcommand{\cs}{\mathrm{cs}}

\newcommand{\cC}{\mathcal{C}}
\newcommand{\cE}{\mathcal{E}}
\newcommand{\cZ}{\mathcal{Z}}
\newcommand{\cY}{\mathcal{Y}}

\newcommand{\cA}{\mathcal{A}}
\newcommand{\cF}{\mathcal{F}}
\newcommand{\cO}{\mathcal{O}}

\newcommand{\cM}{\mathcal{M}}

\newcommand{\R}{\mathbb{R}}

\newcommand{\sL}{{\sf L}}

\newcommand{\bw}{\mathbf{w}}
\newcommand{\by}{\mathbf{y}}

\newcommand{\cU}{\mathcal{U}}

\newcommand{\cT}{\mathcal{T}}
\newcommand{\cX}{\mathcal{X}}
\newcommand{\D}{\mathbb{D}}
\newcommand{\N}{\mathbb{N}}

\newcommand{\Rd}{\mathbb{R}^d}

\newcommand{\Prob}{\mathbb{P}}
\newcommand{\Exp}{\mathbb{E}}
\newcommand{\vp}{\varphi}

\newcommand{\dist}{\mathrm{dist}}

\newcommand{\cS}{\mathcal{S}}
\newcommand{\cG}{\mathcal{G}}

\newcommand{\iprod}[2]{\left\langle #1, #2 \right\rangle}

\newcommand{\be}{\begin{equation}}
\newcommand{\ee}{\end{equation}}

\newcommand\prox[1]{\mathrm{prox}_{#1}}

\newcommand\diag{\mathrm{diag}}

\newcommand{\xmarkt}{\color{black}\ding{55}}
\newcommand{\cmarkt}{\color{bluep}\ding{51}}